\tikzstyle{vertex}=[circle,draw=black,fill=black,inner sep=0,minimum size=5pt,text=white,font=\footnotesize]
\declaretheorem[name=Theorem,numberwithin=section]{theorem}
\newtheorem{lemma}[theorem]{\bf Lemma}
\newtheorem{proposition}[theorem]{\bf Proposition}
\newtheorem*{theorem*}{\bf Theorem}
\theoremstyle{definition}
\def\cF{\mathcal{F}}
\def\cH{\mathcal{H}}
\def\cP{\mathcal{P}}
\def\cQ{\mathcal{Q}}
\def\cS{\mathcal{S}}
\def\cV{\mathcal{V}}
\def\bF{\mathbb{F}}
\def\oF{\overline{\mathbb{F}}}
\def\bE{\mathbb{E}}
\def\bP{\mathbb{P}}
\def\Pb{\mathbb{P}}
\DeclareMathOperator{\aff}{Aff}
\DeclareMathOperator{\Var}{Var}
\DeclareMathOperator{\Cov}{Cov}
\DeclareMathOperator{\diag}{diag}
\title{\vspace{-0.9cm} Point-variety incidences, unit distances and Zarankiewicz's problem for algebraic graphs}
\author{Aleksa Milojevi\'c\thanks{ETH Zurich, e-mail: \textbf{\{aleksa.milojevic, benjamin.sudakov\}@math.ethz.ch}. Research supported in part by SNSF grant 200021\_196965.}, Benny Sudakov\footnotemark[1], Istv\'an Tomon\thanks{Ume\r{a} University, \emph{e-mail}: \textbf{istvan.tomon@umu.se}. Research supported in part by the Swedish Research Council grant VR 2023-03375}}
\date{}
\begin{document}

\maketitle
\begin{abstract}
    In this paper we study the number of incidences between $m$ points and $n$ varieties in $\mathbb{F}^d$, where $\mathbb{F}$ is an arbitrary field, assuming the incidence graph contains no copy of $K_{s,s}$.
    We also consider the analogous problem for algebraically defined graphs and unit distance graphs. 
    
    First, we prove that if $\cP$ is a set of $m$ points and $\cV$ is a set of $n$ varieties in $\bF^{D}$, each of dimension $d$ and degree at most $\Delta$, and in addition the incidence graph is $K_{s,s}$-free, then the number of incidences satisfies \[I(\cP, \cV)\leq O_{d,\Delta, s}(m^{\frac{d}{d+1}} n+m).\]
    This bound is tight when $s,\Delta$ are sufficiently large with respect to $d$, with an appropriate choice of $\bF=\bF(m,n)$. We give two proofs of this upper bound, one based on the framework of the induced Tur\'an problems and the other based on VC-dimension theory. In the second proof, we extend the celebrated result of R\'onyai, Babai and Ganapathy on the number of zero-patterns of polynomials to the context of varieties, which might be of independent interest.

    We also resolve the  problem of finding the maximum number of unit distances which can be spanned by a set of $n$ points $\cP$ in $\bF^d$ whose unit-distance graph is $K_{s, s}$-free, showing that it is
    $$\Theta_{d,s}(n^{2-\frac{1}{\lceil d/2\rceil +1}}).$$

    Finally, we obtain tight bounds on the maximum number of edges of a $K_{s, s}$-free algebraic graph defined over a finite field, thus resolving the Zarankiewicz problem for this class of graphs.
\end{abstract}

\section{Introduction}

In a recent work \cite{MST}, we established sharp bounds on the maximum number of incidences between $m$ points and $n$ hyperplanes in a $d$-dimensional space $\mathbb{F}^d$ for an arbitrary field $\mathbb{F}$, under the standard non-degeneracy condition that the incidence graph contains no copy of the complete bipartite graph $K_{s,s}$. In this paper, we continue this line of research by establishing sharp bounds on the maximum number of edges in incidence graphs between points and varieties, algebraic graphs, and unit distance graphs, under the same non-degeneracy condition.

\subsection{Point-Variety incidences}
Given a set of points $\cP$ and a family of sets $\cV$, the \emph{incidence graph} of $(\cP,\cV)$ is the bipartite graph $G(\cP,\cV)$ with vertex classes $\cP$ and $\cV$ in which an edge is drawn between $x\in \cP$ and $V\in \cV$ if $x\in V$. The number of incidences in $(\cP,\cV)$ is the number of edges of $G(\cP,\cV)$, and it is denoted by $I(\cP,\cV)$. First, we consider the case when the elements of $\cV$ are algebraic varieties, which are defined as common zero sets of a collection of polynomials, see Section~\ref{sec:background} for precise definitions.

The number of incidences between points and varieties in real space have been extensively studied. Pach and Sharir \cite{PS1,PS2} proved that if $\cP$ is a set of $m$ points and $\cV$ is a set of $n$ algebraic curves of degree at most $\Delta$ in $\mathbb{R}^2$ such that $G(\cP,\cV)$ is $K_{s,t}$-free, then $$I(\cP,\cV)=O_{s,t,\Delta}\left(m^{\frac{s}{2s-1}}n^{\frac{2s-2}{2s-1}}+m+n\right).$$
Zahl \cite{Zahl} and Basu and Sombra \cite{BS} extended this bound to dimension 3 and 4, respectively. Subsequently, Fox, Pach, Sheffer, Suk, and Zahl \cite{FPSSZ} established the following general bound in $\mathbb{R}^D$: if $\cP$ is a set of $m$ points and $\cV$ is a set of $n$ varieties in $\mathbb{R}^{D}$ of degree at most $\Delta$, and $G(\cP,\cV)$ is $K_{s,t}$-free, then
\begin{equation}\label{equ:FPSSZ}
    I(\cP,\cV)=O_{D, s,t,\Delta,\varepsilon}\left(m^{\frac{(D-1)s}{Ds-1}+\varepsilon}n^{\frac{D(s-1)}{Ds-1}}+m+n\right),
\end{equation}
for every $\varepsilon>0$. However, this bound is only known to be tight in the special case when $s=2$ and $t$ is sufficiently large, see \cite{Sheffer}.

Here, we consider a similar problem over arbitrary fields. That is, we consider the problem of bounding the number of incidences between points and varieties in $\bF^D$, where $\bF$ is an arbitrary field, under the assumption that the incidence graph is $K_{s, s}$-free (our proofs and results remain essentially the same if we consider the more general problem of forbidding $K_{s,t}$ for $t\geq s$, so we assume $s=t$ to simplify notation).
We obtain the following theorem.

\begin{theorem}\label{thm:varieties upper}
Let $\cP$ be a set of $m$ points and let $\cV$ be a set of $n$ varieties in $\bF^D$, each of dimension $d$ and degree at most $\Delta$. If the incidence graph $G(\cP, \cV)$ is $K_{s, s}$-free, then \[I(\cP, \cV)\leq O_{d,\Delta, s}(m^{\frac{d}{d+1}} n+m).\]
\end{theorem}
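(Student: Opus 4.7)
The plan is to reduce to the Kővári--Sós--Turán (KST) theorem by establishing a suitable $K_{d+1,c}$-freeness condition for the incidence graph, splitting the argument into two regimes according to the relationship between $s$ and $d$.

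In the easy regime $s\leq d+1$, the $K_{s,s}$-free assumption directly implies $K_{d+1,s}$-freeness, since any $K_{d+1,s}$ subgraph contains a $K_{s,s}$ (just pick $s$ of the $d+1$ vertices on the larger side). Applying KST to $G(\cP,\cV)$ with forbidden $K_{d+1,s}$ and part sizes $(m,n)$ immediately yields
\[
I(\cP,\cV)\;\leq\;\tfrac{1}{2}\bigl((s-1)^{1/(d+1)}(n-d)\,m^{1-1/(d+1)} + d m\bigr) \;=\; O_{d,s}\!\bigl(nm^{d/(d+1)}+m\bigr),
\]
with no algebraic input at all; only the combinatorics of $K_{s,s}$-freeness is used.

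In the harder regime $s>d+1$, a $K_{s,s}$-free graph need not be $K_{d+1,s}$-free, so the algebraic structure of varieties must come into play. The plan is to establish a structural dichotomy of the following form: there is a constant $C=C(d,D,\Delta)$ such that whenever $d+1$ points of $\cP$ lie on more than $C$ varieties of $\cV$, these varieties share a common positive-dimensional subvariety $W$ containing those $d+1$ points. This is essentially a moduli-dimension count, in the same spirit as the Rónyai--Babai--Ganapathy theorem on zero-patterns of polynomials, and the extension of that theorem to varieties announced in the abstract should be the natural tool. Once such a $W$ appears, the $K_{s,s}$-free assumption forces either $|W\cap\cP|<s$ or fewer than $s$ varieties containing $W$, so each ``degenerate'' incidence $(p,V)$ witnessed by a common positive-dimensional subvariety can be pruned. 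Organising the positive-dimensional shared subvarieties as irreducible components of pairwise intersections $V_i\cap V_j$ produces $O_{\Delta}(n^2)$ of them, and a bookkeeping argument bounds the total pruned incidences by $O_{d,\Delta,s}(m+n)$. The remaining graph is $K_{d+1,C+1}$-free, and KST completes the argument as in the easy regime.

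The main obstacle is the structural dichotomy required in the hard regime $s>d+1$. Over $\bR$ one would attack it with polynomial partitioning (as in Fox--Pach--Sheffer--Suk--Zahl), but that tool is not available over arbitrary fields. The key innovation should be the extension of Rónyai--Babai--Ganapathy to varieties, and the technical work lies in turning their polynomial zero-pattern count into the statement ``too many varieties through $d+1$ points force a common positive-dimensional subvariety.'' A secondary issue is justifying the Bezout-type bound on irreducible components of pairwise intersections in arbitrary characteristic, and carefully bounding the degenerate-incidence contribution using $K_{s,s}$-freeness --- both standard in principle but requiring care in a field-independent setting.
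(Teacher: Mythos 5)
Your split into regimes $s\leq d+1$ and $s>d+1$ is reasonable, and the easy regime is fine. The hard regime, however, rests on a structural dichotomy that is false. You claim that if $d+1$ points lie on more than $C=C(d,D,\Delta)$ of the varieties, then those varieties share a common positive-dimensional subvariety containing the $d+1$ points. This already fails for $d=1$, $D=2$, $\Delta=2$: two fixed points in $\bF^2$ lie on a three-parameter projective family of conics, so one can choose an arbitrarily large collection of pairwise-distinct irreducible conics through both points, and any two distinct irreducible curves meet in only finitely many points, hence share no positive-dimensional component. In general, passing through $d+1$ points imposes only about $d+1$ linear conditions on the coefficient space of the defining polynomials, leaving a huge family of dimension-$d$ varieties with no shared component. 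With the dichotomy gone, your pruned graph need not be $K_{d+1,C+1}$-free, and the bookkeeping is also unsupported: $K_{s,s}$-freeness constrains a component $W$ of $V_i\cap V_j$ only if $W$ is contained in at least $s$ members of $\cV$, and $O_\Delta(n^2)$ components would naively contribute $O(n^2)$, not $O(m+n)$, incidences.

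The paper avoids the dichotomy altogether. Its first proof constructs an explicit bipartite graph $H_{d,\Delta}$ (a $(d+1)$-layer tree-like pattern with branching factor $k=2^{\Delta^d}+1$) and shows that $G(\cP,\cV)$ cannot contain it as an \emph{induced} subgraph (Proposition~\ref{prop:forbidden induced subgraph}): the large branching lets one pigeonhole among the $\leq\Delta^\ell$ components at each level to find a next variety that forces the dimension down, while the prescribed non-edges of $H_{d,\Delta}$ rule out the alternative; the statement is then closed via a $K_{s,s}$-and-induced-$H$-free Zarankiewicz bound from dependent random choice (Lemma~\ref{lemma:induced_free}, using \cite{HMST}). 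The second proof is closer to the tool you name but applies it differently: it extends R\'onyai--Babai--Ganapathy to containment patterns of varieties using Hilbert functions (Lemma~\ref{lemma:variety_zero_pattern}), proving a dual shatter-function bound $\pi_{\cF}(k)=O(k^{d+1})$, and then invokes a Haussler-packing Zarankiewicz theorem (Theorem~\ref{thm:zarankiewicz bounded VC} from \cite{FPSSZ}). That is how the RBG extension should enter; replacing your false dichotomy with a shatter-function estimate and the packing lemma is the correct repair of your plan.
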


Let us compare our bound with (\ref{equ:FPSSZ}). As $s\rightarrow\infty$, the bound in (\ref{equ:FPSSZ}) approaches the shape $m^{\frac{D-1}{D}}n+m+n$, which matches the one in Theorem \ref{thm:varieties upper} in case $d=D-1$. For smaller $d$, Theorem~\ref{thm:varieties upper} provides even better bounds. Curiously, our bound does not depend on the dimension $D$ of the ambient space, it only depends on the degree and the dimension of the varieties. Moreover, our bounds are essentially tight, as the next theorem shows.

\begin{theorem}\label{thm:construction varieties}
Let $d<D$ be positive integers, $\alpha>0$, and $m, n$ be positive integers such that $n=\lfloor m^{\alpha}\rfloor$, and $m,n$ are sufficiently large with respect to $D$ and $\alpha$. Then there exist a prime $p$, a set of $m$ points $\cP\subseteq \bF_p^D$ and a set $\cV$ of $n$ varieties in $\bF_p^D$ of dimension $d$ and degree at most $\Delta=\lceil(1+\alpha)(d+1)\rceil^2$ such that $G(\cP, \cV)$ does not contain $K_{s, s}$ with $s=\Delta^{1/2}$, and $$I(\cP, \cV)\geq \Omega_{d,\alpha}(m^{\frac{d}{d+1}} n).$$
\end{theorem}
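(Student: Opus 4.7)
The plan is a random algebraic construction over a finite field, in the spirit of Bukh's random algebraic constructions for Zarankiewicz's problem. By Bertrand's postulate, for $m$ sufficiently large I pick a prime $p$ with $m^{1/(d+1)} \leq p \leq 2m^{1/(d+1)}$. I embed $\bF_p^{d+1}$ into $\bF_p^D$ via the first $d+1$ coordinates (possible since $d+1 \leq D$), and take $\cP$ to be an arbitrary subset of this image of cardinality $m$ (possible because $|\bF_p^{d+1}| = p^{d+1} \geq m$). Set $s := \lceil(1+\alpha)(d+1)\rceil$, so that $\Delta = s^2$. Independently sample $n$ uniformly random polynomials $f_1,\dots,f_n \in \bF_p[x_1,\dots,x_{d+1}]_{\leq \Delta}$, and for each $i$ let $V_i \subseteq \bF_p^D$ be the variety cut out by the equations $x_{d+2} = \cdots = x_D = 0$ together with $f_i(x_1,\dots,x_{d+1})=0$. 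By B\'ezout each $V_i$ has degree at most $\Delta$, and with probability $1-O(1/p)$ the polynomial $f_i$ is non-constant, making $V_i$ a hypersurface in the coordinate subspace $\{x_{d+2}=\cdots=x_D=0\}$ and hence of dimension exactly $d$.

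To lower-bound incidences, observe that $f(x)$ is uniformly distributed in $\bF_p$ for any fixed $x$ and uniformly random $f$, so $\bE[I(\cP,\cV)] = mn/p = \Theta(nm^{d/(d+1)})$; a routine Chebyshev argument (using $\bP[f(x)=f(y)=0]=p^{-2}$ for $x\neq y$ to control the variance) shows $I \geq mn/(2p)$ with probability $1-o(1)$. The main task is controlling $K_{s,s}$-copies, which reduces to the following interpolation lemma: for any $s$ distinct points $y_1,\dots,y_s \in \bF_p^{d+1}$, the evaluation map $f\mapsto (f(y_1),\dots,f(y_s))$ from degree-$\leq \Delta$ polynomials to $\bF_p^s$ is surjective. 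To see this, for each ordered pair $i\neq j$ pick an affine linear form $\ell_{ij}$ vanishing at $y_j$ but not at $y_i$ (possible since $y_i\neq y_j$); then $g_i := \prod_{j\neq i}\ell_{ij}/\ell_{ij}(y_i)$ has degree $s-1 \leq \Delta$ and satisfies $g_i(y_k)=\delta_{ik}$. Consequently $\bP[f(y_1)=\cdots=f(y_s)=0]=p^{-s}$ for uniform random $f$, and by independence of the $f_i$'s,
\[
\bE[\#K_{s,s}] \leq \binom{m}{s}\binom{n}{s}\, p^{-s^2} \leq \frac{m^{s[(1+\alpha)-s/(d+1)]}}{(s!)^2} \leq \frac{1}{(s!)^2},
\]
using $s \geq (1+\alpha)(d+1)$ (so the exponent is non-positive) and $p \geq m^{1/(d+1)}$.

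To conclude, by Markov's inequality with positive probability both $I \geq mn/(2p)$ and $\#K_{s,s} \leq C_{d,\alpha}$ hold simultaneously. I would finish by deleting one variety per $K_{s,s}$-copy and replacing it with a dummy $d$-dimensional linear variety disjoint from $\cP$ (for instance a $d$-flat in a translated coordinate hyperplane); this destroys every copy of $K_{s,s}$ while losing at most $C_{d,\alpha}\cdot p^d = O_{d,\alpha}(m^{d/(d+1)})$ incidences, which is negligible compared with $nm^{d/(d+1)}$ since $n = \lfloor m^\alpha\rfloor$ grows with $m$. The main obstacle, which the interpolation lemma overcomes, is establishing that $\bP[f(y_1)=\cdots=f(y_s)=0]=p^{-s}$ holds \emph{uniformly} over all $s$-subsets of distinct points; the degree bound $\Delta = s^2$ in the theorem is tuned precisely so that this interpolation (which needs degree at least $s-1$) goes through, yielding the Zarankiewicz threshold $s = \Delta^{1/2}$.
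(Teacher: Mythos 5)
Your construction takes a genuinely different route from the paper. The paper proceeds through its Proposition 4.3, building a single random bivariate polynomial $f(x,y)$ over $\bF_p^D\times\bF_p^{D'}$ (so the varieties are $V(f(\cdot,q))$ for $n$ sampled $q$'s, which are correlated), and the $K_{s,s}$-freeness relies on the nontrivial independence-of-evaluations claims from \cite{ST23}. You instead sample $n$ \emph{independent} low-degree polynomials, which makes the $K_{s,s}$ probability computation elementary: one only needs Lagrange interpolation at $s$ points, rather than the $s^2$-fold independence of mixed evaluations. That is a real simplification, and the Chebyshev, union-bound, and deletion bookkeeping you outline are all fine (in fact since $\bE[\#K_{s,s}]<1$ you do not even need deletion).

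However, there is a genuine gap: you never ensure that $V_i$ is a \emph{variety}, i.e.\ irreducible, which the theorem statement requires (recall the paper defines a variety as an irreducible algebraic set). Being a hypersurface of dimension $d$ says nothing about irreducibility; a random polynomial $f_i$ of degree $\leq\Delta$ over $\bF_p$ is reducible with probability $\Theta(1/p)$, and since $n=\lfloor m^\alpha\rfloor$ can exceed $p\approx m^{1/(d+1)}$, you should expect $\Omega(n/p)$ reducible $f_i$'s, so you cannot wave them away pointwise. The paper handles precisely this issue by replacing $f_q$ with its irreducible factor $g_q$ having the most zeros in $\cP$; this keeps the degree $\le\Delta$, forces irreducibility, preserves the $K_{s,s}$ bound (since $g_q\mid f_q$, a $K_{s,s}$ among the $V(g_q)$'s forces a $K_{s,s}$ among the $f_q$'s), and only costs a factor $1/\Delta$ in the incidence count because each $f_q$ has at most $\Delta$ irreducible factors. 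You would need to incorporate this (or delete the reducible $f_i$'s and check the incidence loss $O(np^{d-1})$ is negligible) for the argument to be complete. A smaller omission in the same vein: you also do not check that the $n$ resulting varieties are pairwise distinct, which the word ``set'' in the statement requires; this is easy to patch but should be said.
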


The proof of Theorem~\ref{thm:varieties upper} uses the framework of induced Tur\'an problems, as introduced in \cite{HMST, MST}. Namely, we construct a bipartite graph $H_{d,\Delta}$ such that incidence graphs of points and varieties in $\bF^D$ of dimension $d$ and degree at most $\Delta$ avoid $H_{d,\Delta}$ as an induced subgraph, and every vertex of $H_{d,\Delta}$ in one of the parts has degree at most $d+1$. Then, we adapt the ideas of \cite{HMST} to prove bounds on the number of edges of induced $H_{d,\Delta}$-free and $K_{s,s}$-free bipartite graphs. 

Furthermore, we provide an alternative proof of Theorem~\ref{thm:varieties upper} by establishing a bound on the VC-dimension of incidence graphs and the rate of growth of their shatter functions. To do this we bound the number of incidence-patterns between points and lower-dimensional varieties, extending
the celebrated result of R\'onyai, Babai and Ganapathy \cite{RBG} on the number of zero-patterns of polynomials.

We remark that graphs with bounded VC-dimension avoid induced copies of a certain forbidden bipartite subgraph with similar properties as $H_{d,\Delta}$, but we think that our explicit construction of $H_{d,\Delta}$ is more straightforward. Moreover, in certain situations, like the unit-distance problem for instance, finding specific forbidden bipartite graphs leads to better bounds compared to the ones which follow from VC-dimension.

Finally, in order to prove Theorem~\ref{thm:construction varieties} as well as other constructions in this paper, we use the results from \cite{ST23}, which are based on the random algebraic method originally introduced by \cite{Bukh}.

\subsection{Algebraic graphs}
Fox, Pach, Sheffer, Suk, and Zahl \cite{FPSSZ} considered the classical Zarankiewicz's problem for semi-algebraic graphs. This type of problems comes from an old question of Zarankiewicz, who asked what is the largest number of edges that an $n$-vertex graph can have without containing a copy of $K_{s, s}$. In this setting, we say that a bipartite graph $G=(A,B,E)$ is \emph{semi-algebraic} of description complexity $t$ in $(\mathbb{R}^{D_1},\mathbb{R}^{D_2})$, if $A\subset \mathbb{R}^{D_1}$, $B \subset \mathbb{R}^{D_2}$, and the edges are defined by the sign-patterns of $t$ polynomials $f_1,\dots,f_t:\mathbb{R}^{D_1}\times \mathbb{R}^{D_2}\rightarrow \mathbb{R}$, each of degree at most $t$. The authors of \cite{FPSSZ} showed that if such a graph $G$ is $K_{s,s}$-free with $|A|=m$ and $|B|=n$, then
 $$|E(G)|=O_{D_1,D_2,t,s,\varepsilon}\left( m^{\frac{D_2(D_1-1)}{D_1D_2-1}+\varepsilon}n^{\frac{D_1(D_2-1)}{D_1D_2-1}}+m+n\right)$$
for every $\varepsilon>0$. In the special case of interest $D=D_1=D_2$, this gives
\begin{equation}\label{equ:semi-algebraic}
    |E(G)|=O_{D,t,s,\varepsilon}\left((mn)^{\frac{D}{D+1}+\varepsilon}+m+n\right).
\end{equation}

This bound is only known to be tight for $D=2$. Furthermore, it is worth highlighting that this upper bound matches (up to the $\varepsilon$ error term) the best known upper bound on the number of incidences between $m$ points and $n$ hyperplanes in $\mathbb{R}^{D}$, originally proved by Apfelbaum and Sharir~\cite{AS07}.
 
We consider the analogous problem for \emph{algebraic graphs}, that is, graphs that are defined with respect to zero-patterns of polynomials over arbitrary fields. Given two sets of points $\cP\subseteq \bF^{D_1}, \cQ\subseteq \bF^{D_2}$, a collection of $t$ polynomials $f_1, \dots, f_t:\bF^{D_1}\times \bF^{D_2}\to \bF$, and a boolean formula $\Phi$, we define the bipartite graph $G$ on the vertex set $\cP\cup \cQ$, where $x\in \cP$ and $y\in \cQ$ are adjacent if and only if 
\[\Phi([f_1(x, y)=0], \dots, [f_t(x, y)=0])=1.\]
We say that $G$ is an \emph{algebraic graph of description complexity $t$} if it can be described by $t$ polynomials $f_1, \dots, f_t$, each of which has degree at most $t$. 

\begin{theorem}\label{thm:algebraic zarankiewicz}
Let $G$ be an algebraic graph of description complexity at most $t$ on the vertex set $\cP\cup \cQ$, where $\cP\subseteq \bF^{D_1}$, $\cQ\subseteq \bF^{D_2}$ and $|\cP|=m$, $|\cQ|=n$. If $G$ is $K_{s, s}$-free, the number of edges in $G$ is at most $$O_{D_1,D_2,t, s}(\min\{m^{1-1/D_1}n, mn^{1-1/D_2}\}).$$
Moreover, this bound is tight, for every $D_1,D_2$, there exists $t$ such that if $m, n$ are sufficiently large and $s\geq D_1+D_2$, the following holds. There exist a field $\bF$ and a $K_{s, s}$-free algebraic graph $G$ of description complexity at most $t$ in $(\bF^{D_1}, \bF^{D_2})$ with parts of size $m, n$ and at least $\Omega(\min\{m^{1-1/D_1}n, mn^{1-1/D_2}\})$ edges.
\end{theorem}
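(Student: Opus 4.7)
The plan for the upper bound is to reduce to the point--variety incidence result (Theorem~\ref{thm:varieties upper}). By symmetry (swapping the roles of $\cP$ and $\cQ$), it suffices to show $|E(G)|=O_{t,s,D_1}(m^{1-1/D_1}n)$. I would start by converting the defining boolean formula $\Phi$ to disjunctive normal form, which partitions $E(G)$ into at most $4^t$ sub-graphs $G_C$ indexed by DNF clauses $C$; it is then enough to bound each $|E(G_C)|$ separately. For a clause $C=\bigwedge_{i\in A}[f_i=0]\wedge\bigwedge_{j\in B}\neg[f_j=0]$ with $A\ne\emptyset$, the neighborhood in $G_C$ of each $y\in\cQ$ is contained in the variety $W_y^C:=\bigcap_{i\in A}\{x\in\bF^{D_1}:f_i(x,y)=0\}$, which has dimension at most $D_1-1$ and degree at most $t^{|A|}\le t^t$. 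Applying Theorem~\ref{thm:varieties upper} with $d=D_1-1$ to the over-incidence system $(\cP,\{W_y^C\}_{y\in\cQ})$ yields $|E(G_C)|\le I(\cP,\{W_y^C\})\le O(m^{1-1/D_1}n+m)$, which gives the target bound after absorbing the additive $m$ in the relevant range.

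The main technical obstacle is that Theorem~\ref{thm:varieties upper} demands the point--variety incidence graph $(\cP,\{W_y^C\})$ to be $K_{s',s'}$-free for some $s'=s'(s,t)$, which does \emph{not} follow directly from $K_{s,s}$-freeness of $G$. I propose to address this by an iterative extraction argument: if $(\cP,\{W_y^C\})$ contains a $K_{s',s'}$ on $\{x_1,\dots,x_{s'}\}\times\{y_1,\dots,y_{s'}\}$, then all equality literals of $C$ hold on every pair, and any pair that fails to be an edge of $G_C$ must violate some negation literal $[f_k=0]$ with $k\in B$. Pigeonholing over $|B|\le t$ choices of $k$ identifies $k^*\in B$ together with a dense bipartite sub-structure on which $f_{k^*}=0$; the K\H{o}v\'ari--S\'os--Tur\'an theorem then produces a sub-biclique on which $f_{k^*}=0$ holds universally, effectively moving $k^*$ from $B$ to $A$. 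Iterating $|B|\le t$ times eliminates all negations and yields a $K_{s,s}$ in $G$, contradicting the hypothesis once $s'$ is chosen sufficiently large in terms of $s$ and $t$. Pure-negation clauses ($A=\emptyset$) are treated by a counting argument: if such a clause contributes more than $C_{t,s}\,m^{1-1/D_1}n$ edges, then the varieties $V(f_j(\cdot,y))$ must cover all but few points of $\cP$ for most $y$, forcing many incidences in the system $(\cP,\{V(f_j(\cdot,y))\})$, to which Theorem~\ref{thm:varieties upper} can be applied (after verifying its $K_{s',s'}$-freeness by a similar extraction), yielding a contradiction.

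For the lower bound, I would invoke the random algebraic method of Bukh~\cite{Bukh}, as developed in~\cite{ST23}. Taking $\cP=\bF_p^{D_1}$ and $\cQ=\bF_p^{D_2}$ for a prime $p$ with $p^{D_1}\approx m$ and $p^{D_2}\approx n$, the plan is to choose a random polynomial $F\in\bF_p[\vx,\vy]$ of bounded degree $t=t(D_1,D_2)$ such that, with positive probability, (i) the variety $V(F)$ has the expected dimension $D_1+D_2-1$, so that the algebraic graph defined by $F(x,y)=0$ has $\Omega(p^{D_1+D_2-1})$ edges, and (ii) for any $s\ge D_1+D_2$ choices of $y_1,\dots,y_s\in\cQ$ the intersection $\bigcap_{i}V(F(\cdot,y_i))$ is zero-dimensional with fewer than $s$ points (using Bezout combined with genericity of $F$), ensuring $K_{s,s}$-freeness. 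In the balanced regime $m=p^{D_1}$, $n=p^{D_2}$ one has $p^{-1}mn=m^{1-1/D_1}n=mn^{1-1/D_2}$, realizing the target lower bound; unbalanced $(m,n)$ can be handled by taking bipartite products of the balanced construction with trivial one-sided constructions, or by random sub-sampling to interpolate between the two extreme regimes.
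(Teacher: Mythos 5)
Your plan has a genuine gap in the upper bound, and the gap is in precisely the place you flagged as the ``main technical obstacle.'' The iterative extraction cannot work, even in principle: after pigeonholing and K\H{o}v\'ari--S\'os--Tur\'an you obtain a biclique on which every literal of $C$ --- equalities \emph{and} negations --- is witnessed by $f_i=0$. On that biclique the negation literals of $C$ are all \emph{false}, so $C$ is false on every pair, and these are uniformly non-edges of $G_C$. Whether they are edges of $G$ depends on how $\Phi$ evaluates on that zero-pattern, which is under no control whatsoever; in particular $\Phi$ may evaluate to $0$ there, in which case no $K_{s,s}$ in $G$ appears and nothing is contradicted. A concrete failure: take $\Phi=[f_1=0]\wedge\neg[f_2=0]$ with $f_1=f_2$ identically zero; then $G$ is empty (hence trivially $K_{s,s}$-free), while $W_y^C=\cP$ for every $y$, so $(\cP,\{W_y^C\})$ is the complete bipartite graph and certainly not $K_{s',s'}$-free for any $s'$. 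So the hypothesis of Theorem~\ref{thm:varieties upper} genuinely fails, and no amount of Ramsey-type extraction restores it. (A secondary issue: $W_y^C$ is an algebraic set, not an irreducible variety of a fixed dimension, so one would additionally have to decompose by components of each dimension, and the pure-negation case as you describe it has the implication running backwards --- many edges of a pure-negation clause means the varieties cover \emph{few} points, not most.)

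The paper avoids the whole problem by not routing through Theorem~\ref{thm:varieties upper} at all. Instead, it directly bounds the shatter function of the neighbourhood set system $\cF=\{N(v):v\in\cQ\}$ by $\pi_\cF(k)\le\binom{kt^2}{D_2}$, using the R\'onyai--Babai--Ganapathy zero-pattern bound (\ref{eqn:zero-patterns polynomials}) applied to the $kt$ polynomials $y\mapsto f_j(x_i,y)$ for $k$ fixed points $x_1,\dots,x_k\in\cP$. This is insensitive to the shape of the Boolean formula $\Phi$ and needs no case analysis. One then invokes Theorem~\ref{thm:zarankiewicz bounded VC}, which takes $K_{s,s}$-freeness of $G$ itself (not of an auxiliary point--variety system) as input. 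If you want to salvage the ``reduce to varieties'' philosophy, the fix is to replace your appeal to Theorem~\ref{thm:varieties upper} by a direct shatter-function bound and then apply Theorem~\ref{thm:zarankiewicz bounded VC}; at that point the DNF decomposition becomes unnecessary.

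Your lower bound is essentially the right idea and matches the paper's (Bukh's random algebraic method over $\bF_p$ with a random polynomial of bounded degree, plus second-moment control of the number of zeros). However, the balanced regime $p^{D_1}\approx m$, $p^{D_2}\approx n$ cannot hold for arbitrary $m,n$, and ``bipartite products with trivial constructions'' is too vague to cover the general case. The paper handles this cleanly: by symmetry assume $m\le n^{D_1/D_2}$, take $p$ the smallest prime exceeding $n^{1/D_2}$, build the graph on the full boxes $\bF_p^{D_1}\times\bF_p^{D_2}$, and then randomly subsample $m$ points from the first side and $n$ from the second. Random subsampling preserves $K_{s,s}$-freeness and, in expectation, the edge density, so this interpolates to all $(m,n)$.
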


\subsection{Unit distances in finite fields}

The celebrated Erd\H{o}s unit distance problem \cite{Erdos1,Erdos2} is one of the most notorious open problems in combinatorial geometry, asking to estimate the function $f_d(n)$, the maximum number of unit distances spanned by $n$ points in $\mathbb{R}^d$. The cases $d=2,3$ are the most difficult ones, with still a large a gap between the best known lower and upper bounds. The state-of-the-art is $n^{1+\Omega(1/\log\log n)}<f_2(n)<O(n^{4/3})$, where the lower bound is due to Erd\H{o}s \cite{Erdos2}, and the upper bound is due to Spencer, Szemer\'edi and Trotter \cite{SST}. Furthermore, $\Omega(n^{4/3}\log\log n)<f_3(n)<n^{1.498}$ by \cite{Erdos2} and \cite{Zahl2}, respectively. On the other hand, for $d\geq 4$, it is not difficult to construct a set of points achieving $f_d(n)=\Theta_d(n^2)$ \cite{Lenz}. Indeed, if $d\geq 4$, one can take two orthogonal linear subspaces $V_1$ and $V_2$, each of dimension at least 2. Then, placing $n/2$ points on the origin centered sphere $S_i\subset V_i$ of radius $1/\sqrt{2}$ for $i=1,2$, we get at least $n^2/4$ unit distances.  However, as it is shown in \cite{FPSSZ}, in some sense this construction is the only way one can get a quadratic number of unit distances. More precisely, if we assume that no $s$ points are contained in a $(d-3)$-dimensional sphere, then the maximum number of unit distances is at most 
$$O_{d,s,\varepsilon}(n^{2-\frac{2}{d+1}+\varepsilon})$$
for every $\varepsilon>0$. The key observation is that such unit distance graphs are semi-algebraic of complexity at most $2$ containing no copy of $K_{s,s}$, in which case the previous bound follows simply from (\ref{equ:semi-algebraic}). See also Frankl and Kupavskii \cite{FK} for further strengthening.

Here, we completely resolve the analogous problem of bounding the number of unit distances over arbitrary fields, assuming the unit distance graph is $K_{s,s}$-free. A \textit{unit sphere} in $\bF^d$ with center $a=(a_1, \dots, a_d)\in \bF^d$ is defined as the set of points $(x_1, \dots, x_d)\in \bF^d$ which satisfy $$(x_1-a_1)^2+\dots+(x_d-a_d)^2=1.$$
Furthermore, points $a$ and $b$ are at \emph{unit distance} if $b$ is contained in the unit sphere with center $a$.

Using Theorem \ref{thm:algebraic zarankiewicz} and noting that unit distance graphs are algebraic of complexity $2$ in $(\bF^d,\bF^d)$, we immediately get that the maximum number of unit distances spanned by $n$ points in $\bF^d$ such that the unit distance graph contains no copy of $K_{s,s}$ is $O_{d,s}(n^{2-1/d})$. However, as we show in the next theorem, this can be significantly improved.

\begin{theorem}\label{thm:unit_distance}
Let $\cP$ be a set of $n$ points in $\bF^d$. If the unit distance graph of $\cP$ does not contain $K_{s, s}$, then the number of unit distances spanned by $\cP$ is at most $O_{d,s}(n^{2-\frac{1}{\lceil d/2\rceil +1}})$.
\end{theorem}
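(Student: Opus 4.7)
Direct application of Theorem~\ref{thm:algebraic zarankiewicz} with $(D_1, D_2) = (d, d)$ only gives $O(n^{2-1/d})$. The plan is to reduce the effective ambient dimension to $\lceil d/2 \rceil + 1$ by combining the paraboloid lifting with a slicing argument. Set $k = \lceil d/2 \rceil$ and $j = \lfloor d/2 \rfloor$, and lift each $a \in \bF^d$ to $\phi(a) = (a, \|a\|^2) \in \bF^{d+1}$. For $b \in \cP$, define the hyperplane $H_b = \{y \in \bF^{d+1} : y_{d+1} - 2\sum_i b_i y_i + \|b\|^2 - 1 = 0\}$; then $\phi(a) \in H_b$ if and only if $\|a-b\|^2 = 1$, so unit-distance pairs are in bijection with point-hyperplane incidences in $\bF^{d+1}$.

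Partition $\cP$ by its first $j$ coordinates: for $c \in \bF^j$ put $\cP_c = \{a \in \cP : (a_1, \ldots, a_j) = c\}$ and $m_c = |\cP_c|$. The lifted set $\phi(\cP_c)$ lies in the $(k+1)$-dimensional affine subspace $W_c = \{y \in \bF^{d+1}: y_i = c_i \text{ for } i \leq j\}$. Moreover, the restriction $H_b \cap W_c$ depends on $b$ only through $(b_{j+1}, \ldots, b_d)$ and the scalar $\gamma(b, c) := 2\sum_{i \leq j} b_i c_i + 1 - \|b\|^2$, so we may re-parameterize each such hyperplane by $\tilde b := (b_{j+1}, \ldots, b_d, \gamma(b, c)) \in \bF^{k+1}$. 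The resulting bipartite incidence between $\phi(\cP_c) \subset W_c \cong \bF^{k+1}$ and $\{\tilde b : b \in \cP\} \subset \bF^{k+1}$ is algebraic of bounded complexity and, being a subgraph of the original unit-distance graph, is $K_{s,s}$-free. Theorem~\ref{thm:algebraic zarankiewicz} with $D_1 = D_2 = k+1$ therefore yields an incidence bound of $O_{d,s}(m_c \cdot n^{1 - 1/(k+1)})$ in each slice.

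Summing over slices,
\[
\text{\#unit distances} \;\leq\; \sum_c O\bigl(m_c \cdot n^{1-1/(k+1)}\bigr) = O\bigl(n^{1-1/(k+1)}\bigr) \cdot \sum_c m_c = O\bigl(n^{2-1/(\lceil d/2 \rceil + 1)}\bigr),
\]
which is the desired bound.

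The main obstacle is the re-parameterization step: for certain degenerate choices of the slice parameter $c$, the map $b \mapsto \tilde b$ may fail to be injective on $\cP$, which would prevent a clean application of Theorem~\ref{thm:algebraic zarankiewicz} to the bipartite graph in $(\bF^{k+1}, \bF^{k+1})$ and could obstruct the preservation of $K_{s,s}$-freeness. A secondary concern is handling the additive error terms in Theorem~\ref{thm:algebraic zarankiewicz}, which naively contribute $O(Tn)$ across the $T$ nonempty slices; both issues can likely be resolved by a dyadic analysis on slice sizes or by a case split between slices with few and many points, combined with the standard Kővári--Sós--Turán bound when $s$ is small.
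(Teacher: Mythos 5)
Your route is genuinely different from the paper's. The paper derives Theorem~\ref{thm:unit_distance} from a structural result about intersections of unit spheres (Lemma~\ref{lemma:intersections of spheres}), which shows that the bipartite point--sphere incidence graph avoids an explicit pattern $\Pi_{d+1}$, and then invokes Lemma~3.2 of~\cite{MST} for pattern-avoiding $K_{s,s}$-free graphs. Your plan instead uses the paraboloid lift $\phi(a)=(a,\|a\|^2)$, slices $\cP$ by its first $\lfloor d/2\rfloor$ coordinates so that each lifted slice lies in a $(\lceil d/2\rceil+1)$-dimensional flat $W_c$, re-parameterizes the restricted hyperplanes $H_b\cap W_c$ by $\tilde b\in\bF^{\lceil d/2\rceil+1}$, and applies Theorem~\ref{thm:algebraic zarankiewicz} per slice. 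The lift-and-slice idea is attractive and correctly targets the exponent, but the paper's pattern argument is a global one that does not pay any per-slice overhead, which is exactly where your argument runs into trouble.

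\textbf{The gap.} You correctly flag the non-injectivity of $b\mapsto\tilde b$, but the proposed remedies do not close the gap. Spelling it out: write $\mu_c(\tilde b)=|\pi_c^{-1}(\tilde b)|$ for the fiber size and $\deg_c(\tilde b)$ for the degree of $\tilde b$ in the collapsed algebraic graph $G'_c$ in $(\bF^{k+1},\bF^{k+1})$. The true number of unit-distance incidences in slice $c$ is $\sum_{\tilde b}\deg_c(\tilde b)\,\mu_c(\tilde b)$, not $e(G'_c)=\sum_{\tilde b}\deg_c(\tilde b)$. Since $G'_c$ is $K_{s,s}$-free (this part is fine: distinct $\tilde b$'s have distinct preimages), $K_{s,s}$-freeness of the unit-distance graph only forces $\min\{\deg_c(\tilde b),\mu_c(\tilde b)\}<s$ for each $\tilde b$. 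For fibers with $\mu_c(\tilde b)<s$ one gets $\leq s\cdot e(G'_c)$; for large fibers one only gets $\leq s\sum_{\tilde b\text{ large}}\mu_c(\tilde b)\leq sn$. Together with the additive $+n$ error in the Zarankiewicz bound, each slice carries a residual $\Theta(n)$ that is not controlled by $m_c$. Your dyadic/large-small split does handle slices with $m_c\geq n^{1/(k+1)}$ (there are at most $n^{1-1/(k+1)}$ of them, so their $O(n)$ leakage totals $O(n^{2-1/(k+1)})$), but the small slices can number up to $n$, and for them the $O(n)$-per-slice terms sum to $\Theta(n^2)$, destroying the bound. Neither KST for small $s$ nor further dyadicization on $m_c$ rescues this, because the obstruction is not the size of a slice but the fact that the per-slice error is independent of $m_c$. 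To make the plan rigorous one would need either an injective re-parameterization (which a single slicing by coordinates does not provide) or a quantitative bound showing that, summed over all $(b,c)$, the ``collapsed but still incident'' events are few; as written, neither is supplied. The paper's pattern-based proof avoids this entirely.
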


Furthermore, this bound is sharp in the following sense.

\begin{theorem}\label{thm:sphere_construction}
Let $n, d$ be positive integers. There exists a constant $s=s(d)$, a finite field $\bF_q$ and a set of $n$ points $\cP\subseteq \bF_q^d$ such that the unit distance graph on $\cP$ does not contain $K_{s, s}$ and  $\cP$ spans $\Omega_d(n^{2-\frac{1}{\lceil d/2\rceil+1}})$ unit distances.
\end{theorem}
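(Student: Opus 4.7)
Set $k=\lceil d/2\rceil$; the target $\Omega_d(n^{2-1/(k+1)})$ matches what one expects from a $K_{s,s}$-free point-variety incidence structure with $n$ points and $n$ varieties of dimension $k$ (cf.\ Theorems~\ref{thm:varieties upper}--\ref{thm:construction varieties}). The plan is to realize such a count as a number of unit distances, using the random algebraic method of \cite{ST23}. For a large prime power $q$ of odd characteristic, set $n=\Theta(q^{k+1})$ and take $\cP=V(\bF_q)$, where $V\subseteq \bF_q^d$ is a $(k+1)$-dimensional variety of bounded degree obtained as the common zero set of $d-k-1$ random polynomials of bounded degree. This is the main case $d\geq 4$; the small cases $d\in\{1,2,3\}$ are handled by separate ad hoc constructions sketched at the end. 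By Lang--Weil, $|\cP|=\Theta(q^{k+1})=\Theta(n)$.

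To count unit distances, each unit sphere $S_p$ is a degree-$2$ hypersurface, so $V\cap S_p$ has expected dimension $(k+1)+(d-1)-d = k$. For a sufficiently generic $V$, Lang--Weil yields $|V(\bF_q)\cap S_p|=\Theta(q^k)$ for a positive fraction of points $p\in\cP$, giving $\Omega(n\cdot q^k)=\Omega(n^{2-1/(k+1)})$ unit distances in total. For the $K_{s,s}$-freeness, the common unit-distance neighborhood of any $s$ points $a_1,\dots,a_s\in\cP$ lies in $V\cap \bigcap_{i=1}^{s} S_{a_i}$. Pairwise subtraction of the sphere equations turns all but one into affine-linear equations, so if the $a_i$ affinely span a subspace of dimension $r$, then this intersection has expected dimension $k-r$. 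Either $r\geq k$, in which case the intersection is zero-dimensional with $\bF_q$-point count bounded by a B\'ezout constant $C=C(d,\deg V)$; or $r<k$, in which case all $a_i$ lie in an affine subspace of dimension $<k$ that meets a generic $(k+1)$-dimensional $V$ in only a constant number of $\bF_q$-points, forcing $s$ itself to be bounded. Taking $s=s(d)$ larger than both constants then rules out $K_{s,s}$.

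The main obstacle is ensuring that the dimension and counting bounds on $V\cap S_p$ and $V\cap\bigcap_i S_{a_i}$ hold \emph{uniformly} over every choice of $p\in\cP$ and every $s$-tuple $(a_1,\dots,a_s)\in\cP^s$, rather than merely for a generic configuration. This is exactly the strength of the random algebraic framework of \cite{ST23}: with positive probability over the random polynomials defining $V$, the intersection of $V$ with \emph{every} fixed bounded-degree subvariety of $\bF_q^d$ has the expected dimension and $\bF_q$-point count up to absolute constants depending only on $d$ and the chosen degree bounds. For the boundary cases: $d=1$ is trivial, $d=2$ is handled by $\cP=\bF_q^2$ (two unit circles meet in at most two points, so the unit-distance graph is automatically $K_{3,3}$-free and spans $\Omega(n^{3/2})$ unit distances), and $d=3$ is handled by a thinned subset of $\bF_q^3$ designed to limit the number of points on any isotropic line -- isotropic lines being the only obstacle to taking $\cP=\bF_q^3$ directly -- while retaining $\Omega(n^{5/3})$ unit distances.
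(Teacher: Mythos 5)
Your $K_{s,s}$-freeness argument breaks down when $d$ is odd. Setting $k=\lceil d/2\rceil$ and $\dim V = k+1$, your Case~2 ($r<k$) asserts that an affine flat $W$ of dimension $r\leq k-1$ meets a generic $V$ in $O(1)$ points. But the expected dimension of $V\cap W$ is $(k+1)+(k-1)-d = 2\lceil d/2\rceil-d$, which equals $1$ (not $\leq 0$) whenever $d$ is odd. So for a typical $(k-1)$-flat $W$ one has $|V(\bF_q)\cap W| = \Theta(q)$, and one may choose $s$ points $a_1,\dots,a_s$ of $\cP$ inside $W$ affinely spanning $W$. Their common unit-sphere neighbourhood $V\cap\bigcap_i S_{a_i}$ then has expected dimension $(k+1)-1-(k-1) = 1$, so it too contains $\Theta(q)$ points of $\cP$; pairing any $s$ of them with $a_1,\dots,a_s$ yields a $K_{s,s}$ for every fixed $s$ once $q$ is large. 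The $(k-1)$-evasiveness your Case~2 needs is simply incompatible with the required size $|\cP|\approx q^{\lceil d/2\rceil+1}$ when $d$ is odd, so the proposed construction cannot be $K_{s,s}$-free for odd $d\geq 5$, and no appeal to the random algebraic method can repair this, since the offending intersections have positive expected dimension for \emph{every} $(k-1)$-flat.

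The paper's proof is structured precisely to sidestep this. It sets $k=\lfloor d/2\rfloor$ (one smaller than your $k$ for odd $d$) and takes $\cP = U \cup (U+x)$, where $U\subseteq\bF_p^d$ is a $(k-1,s)$-variety-evasive set of Dvir--Koll\'ar--Lovett of size $p^{\lceil d/2\rceil+1}$ and $x$ is a random shift ensuring many unit distances. With this smaller $k$ the evasiveness is achievable, but now dimension counting alone no longer forbids a $K_{s,s}$: the affine flat spanned by one side of a putative $K_{s,s}$ can have dimension exactly $k$ and meet $\cP$ in many points. The paper closes this case by showing (via Lemma~\ref{lemma:intersections of spheres} and Lemma~\ref{lemma:isotropic flats}) that such a $k$-flat must be totally isotropic and contained in every relevant unit sphere, and then invokes Lemma~\ref{lemma:auxiliary odd d}: for the norm $\|\cdot\|_d$ and a prime $p\equiv 3\bmod 4$ (so that $-1$ is a non-square), no totally isotropic $k$-flat in $\bF_p^{2k+1}$ admits a unit-norm vector orthogonal to it. This algebraic obstruction, together with the careful choice of bilinear form and the final extension to $\bF_{p^2}$ when $d\equiv 1\bmod 4$, is what makes the odd case work; your sketch contains no analogue of it.
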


\noindent
\textbf{Organization.} For the reader's convenience, we begin by introducing the definitions and basics of algebraic geometry in Section~\ref{sec:background}. Then, in Section \ref{sect:variety}, we present two proofs of Theorem \ref{thm:varieties upper}, one based on extremal graph theory and another slightly weaker variant using shatter functions and VC dimension. We postpone the constructions showing tightness of these bounds to Section~\ref{sect:algebraic}, where we also prove Theorem~\ref{thm:algebraic zarankiewicz}. Finally, in Section \ref{sect:spheres}, we address the unit-distance problem and prove Theorems \ref{thm:unit_distance} and \ref{thm:sphere_construction}.

\section{Preliminaries from algebraic geometry}\label{sec:background}

Much of the following material is based on the Appendix of \cite{DGW} and the excellent book of Cox, Little and O'Shea \cite{CLO}.

Let $\oF$ be the algebraic closure of the field $\bF$. We denote by $\oF[x_1, \dots, x_D]$ the set of polynomials in variables $x_1, \dots, x_D$ with coefficients in $\oF$. An \textit{affine algebraic set} $V$ is a set of common zeros of polynomials $f_1, \dots, f_k\in \oF[x_1, \dots, x_D]$ in $\oF^D$, and this set is denoted by $V(f_1, \dots, f_k)$. Formally, \[V(f_1, \dots, f_k)=\left\{(a_1, \dots, a_d)\in \oF^D: f_i(a_1, \dots, a_d)=0\text{ for all }i\in [k]\right\}.\]

In a similar way, one can define projective algebraic sets. In this case, the ambient projective space $\bP^D(\oF)$ over the field $\oF$ is defined as the set of equivalence classes of points in $\oF^{D+1}\backslash\{(0, \dots, 0)\}$, where $(a_0, \dots, a_{D})\sim (b_0, \dots, b_D)$ if $a_i=\lambda b_i$ for every $i$ for some $\lambda\in \oF$. Note that for every homogeneous polynomial $f\in \oF[x_0, \dots, x_D]$ and every $\lambda\neq 0$, we have $f(a_0, \dots, a_D)=0$ if and only if $f(\lambda a_0, \dots, \lambda a_D)=0$. Hence, the set of zeros of a homogeneous polynomial in $\bP^D(\oF)$ is well-defined. Thus, we can define a \textit{projective algebraic set} determined by homogeneous polynomials $f_1, \dots, f_k\in \oF[x_0, \dots, x_D]$ as the common zero set of these polynomials.

One should think of the projective space as the ``completion" of the affine space, in which one adds certain points at infinity. Hence, to every affine algebraic set one can uniquely associate a projective variety which extends it. Formally, one can define a homogenization $f^h$ of the degree $d$ polynomial $f\in \oF[x_1, \dots, x_D]$ by setting $f^h(x_0, \dots, x_D)=x_0^df(x_1/x_0, \dots, x_D/x_0)$. Then, an affine algebraic set $V(f_1, \dots, f_k)\subseteq \oF^D$ extends to the projective algebraic set defined by polynomials $f_1^h, \dots, f_k^h$. 

Finite unions and arbitrary intersections of algebraic sets are also algebraic. An algebraic set $V$ is \textit{irreducible} if there are no algebraic sets $V_1, V_2\subsetneq V$ for which $V_1\cup V_2=V$, and an irreducible algebraic set is called a \textit{variety}. The decomposition theorem for algebraic sets states that there is a unique way to express an algebraic set as the finite union of varieties, which are then called the \textit{components} of the algebraic set (see Section 6 of Chapter 4 in \cite{CLO}).

Given an affine algebraic set $Y\subset \oF^D$, the \emph{ideal of $Y$}, denoted by $I(Y)$, is defined as the set of polynomials $f\in \oF[x_1, \dots, x_D]$ that vanish on $Y$. Clearly, if $g,h\in \oF[x_1, \dots, x_D]$ are polynomials, then the restrictions of $g$ and $h$ on $Y$ are identical if and only if  $g-h\in I(Y)$. Therefore, the space of polynomial functions on $Y$ is isomorphic to the quotient $\oF[x_1, \dots, x_D]/I(Y)$. The same definition works for projective algebraic sets $Y\subseteq \bP^D(\oF)$, where $I(Y)$ stands for the set of all homogeneous polynomials in $\oF[x_0, \dots, x_D]$ vanishing on $Y$.

Next, we define the dimension and the degree of a variety. We give two definitions of the dimension of a variety (for the proof of their equivalence, see Theorem I.7.5 in \cite{H}). Both of these definitions will be useful in our proofs. According to the first definition, the \textit{dimension} of the variety $V$ is the maximum value of $r$ for which there exists a chain of varieties $V_0, V_1, \dots, V_r$ such that $\emptyset=V_0\subsetneq V_1\subsetneq \dots\subsetneq V_r\subsetneq V$.

Next, we give a definition the dimension based on the ideal. Fix a projective variety $V\subseteq \bP^D(\oF)$ and let $\oF_{t}[x_0, x_1, \dots, x_D]$ be the space of homogeneous polynomials of degree $t$, which is a finite-dimensional vector space over $\oF$. Furthermore, let $I_{t}=I(V)\cap \oF_{t}[x_0, x_1, \dots, x_D]$ be the space of those homogeneous polynomials in $I(V)$ which have degree $t$. Since $I_{t}$ is a subspace of $\oF_{t}[x_0, x_1, \dots, x_D]$, one may consider the dimension of the quotient $\dim \big(\oF_{t}[x_0, x_1, \dots, x_D]/I_{t}\big)$, which is called the \textit{Hilbert function} of $I$. By Hilbert's theorem, the integer function $t\mapsto\dim \big(\oF_{t}[x_0, \dots, x_D]/I_{t}\big)$ is a polynomial for $t$ sufficiently large and it is called the \emph{Hilbert polynomial} of $I(V)$ (see e.g. Proposition 3 on p. 487. in \cite{CLO}). The \emph{dimension} of the variety $V$, denoted by $\dim(V)$, is then the degree of the Hilbert polynomial associated to $I(V)$. Furthermore, the \emph{degree} of the variety, denoted by $\deg V$, is defined as $(\dim V)!$ times the leading coefficient of this polynomials. Finally, the dimension and the degree of the affine variety are simply the dimension and the degree of the associated projective variety.

To get some intuition for these concepts, we remark that if $V=V(f)$ is a variety in $D$-dimensional space defined by a single irreducible polynomial $f$, then the dimension of $V$ is $D-1$ and the degree of $V$ is $\deg f$.

To conclude this section, let us mention two important results that we use in our proofs. The first one concerns the intersections of varieties whose degree and dimension are known and holds for affine and projective varieties alike (for a reference, see Example 8.4.6 in \cite{F}).

\begin{theorem}\label{thm:preliminary}
Let $V_1$ and $V_2$ be algebraic varieties of dimensions $d_1\leq d_2$ in a projective or affine ambient space of dimension $D$ such that $V_1\not\subseteq V_2$. If $Z_1, \dots, Z_k$ are the irreducible components of $V_1\cap V_2$, then $\dim(Z_i)\leq d_1-1$ for all $i$ and $\sum_{i=1}^k \deg(Z_i)\leq \deg V_1\cdot \deg V_2$. 
\end{theorem}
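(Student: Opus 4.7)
The plan is to prove the statement in two parts, first reducing the affine case to the projective one by homogenization. The first part (the dimension bound) is a soft consequence of irreducibility, while the second part (the degree bound) is essentially Bezout's theorem for projective varieties.

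For the dimension bound I would use the chain definition of dimension directly. Since $V_1$ is irreducible and $V_1 \not\subseteq V_2$, the intersection $V_1 \cap V_2$ is a proper Zariski-closed subset of $V_1$. Given any irreducible component $Z_i$ of $V_1 \cap V_2$ and any maximal ascending chain of subvarieties $\emptyset \subsetneq W_1 \subsetneq \cdots \subsetneq W_r = Z_i$, we may append $V_1$ itself to the chain because $Z_i \subsetneq V_1$; this forces $\dim Z_i = r \leq \dim V_1 - 1 = d_1 - 1$. The argument uses nothing about degrees.

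For the degree bound I would homogenize and work with the Hilbert-polynomial definition of $\deg V$ given in Section~\ref{sec:background}, then induct on the codimension $c$ of $V_2$ in the ambient space. In the base case, $V_2 = V(f)$ is a hypersurface of degree $\deg V_2 = \deg f$. Multiplication by $f$ on the homogeneous coordinate ring of $V_1$ is injective modulo the components of $V_1$ contained in $V(f)$ (of which there are none, by the hypothesis $V_1 \not\subseteq V_2$), and the associated short exact sequence of graded modules shifts the Hilbert polynomial so that its leading coefficient is multiplied by $\deg f$. Comparing with the primary decomposition of $V_1 \cap V_2$ and summing Hilbert polynomials over the components $Z_i$ then yields $\sum_i \deg Z_i \leq \deg V_1 \cdot \deg V_2$. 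For general $V_2$, I would write $V_2$ as (a component of) an intersection of hypersurfaces $V(f_1), \ldots, V(f_c)$ chosen generically, apply the hypersurface case repeatedly to $V_1$, and track how degrees of components evolve.

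The main obstacle is the bookkeeping in the inductive step: at each stage one must ensure that the next cutting hypersurface does not fully contain any top-dimensional component of the current intermediate variety, so that the Hilbert-polynomial argument continues to apply componentwise and no excess degree is lost. This genericity can be arranged because $\oF$ is algebraically closed, but it is precisely the delicate part of the classical proof and is packaged into Example~8.4.6 of Fulton's \emph{Intersection Theory}. In practice, for the purposes of this paper, I would simply invoke that reference rather than redevelop the machinery.
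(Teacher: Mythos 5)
The paper does not prove Theorem~\ref{thm:preliminary} at all; it cites Example~8.4.6 of Fulton's \emph{Intersection Theory} as the reference, which is exactly what you fall back on at the end, so your approach coincides with the paper's. Your extra sketch is a helpful gloss (the chain argument for $\dim Z_i \le d_1 - 1$ is correct and self-contained; the Hilbert-polynomial outline for the degree bound is reasonable, though as you note the reduction of a general $V_2$ to cutting hypersurfaces is precisely the delicate point that Fulton's refined B\'ezout machinery handles), but it is supplementary rather than a divergence.
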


The second statement we use is a uniform version of Hilbert's theorem, which can be used to bound the Hilbert function (for a reference, see e.g. Chapter 9 of \cite{NA}). 

\begin{theorem}\label{thm:uniform_hilbert}
Let $V$ be a projective variety and let $I=I(V)$ be the associated ideal. Then, for every positive integer $t$,  $\dim \big(\oF_{t}[x_0, x_1, \dots, x_D]/I_{t}\big)\leq \deg(V) t^{\dim V}+\dim V.$
\end{theorem}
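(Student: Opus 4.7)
The plan is induction on $d := \dim V$, following the classical derivation of Hilbert's theorem via hyperplane sections. Write $R := \oF[x_0, \dots, x_D]$ and $h_V(t) := \dim(R/I(V))_t$. The base case $d = 0$ is immediate: $V$ is a set of $\deg V$ points in $\bP^D(\oF)$, so $h_V$ is monotone non-decreasing and stabilises at $\deg V$, giving $h_V(t) \leq \deg V$ for every $t \geq 0$.

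For the inductive step with $d \geq 1$ and $\delta := \deg V$, the special case $V = \bP^D$ is handled by direct calculation using $h_V(t) = \binom{t+D}{D}$ together with the elementary inequality $\binom{t+D}{D} \leq t^D + D$. Otherwise, choose a linear form $\ell$ not identically vanishing on $V$. Since $V$ is irreducible, multiplication by $\ell$ is injective on $R/I(V)$, yielding the short exact sequence of graded modules
\begin{equation*}
    0 \longrightarrow (R/I(V))(-1) \xrightarrow{\;\cdot\, \ell\;} R/I(V) \longrightarrow R/(I(V) + (\ell)) \longrightarrow 0,
\end{equation*}
from which $h_V(t) - h_V(t-1) = \dim (R/(I(V) + (\ell)))_t$. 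By Theorem~\ref{thm:preliminary}, the algebraic set $V \cap V(\ell)$ decomposes as $Z_1 \cup \dots \cup Z_k$ with each $Z_i$ of dimension at most $d - 1$ and $\sum_i \deg Z_i \leq \delta$. A filtration argument on $R/(I(V) + (\ell))$, whose successive quotients are shifted copies of $R/P$ for the associated primes $P$ of $I(V) + (\ell)$, combined with Bezout's theorem (which guarantees that the multiplicities $m_i$ of the minimal primes satisfy $\sum_i m_i \deg Z_i = \delta$), reduces the estimate to $\dim(R/(I(V) + (\ell)))_t \leq \sum_i m_i \dim(R/I(Z_i))_t$. Applying the inductive hypothesis to each $Z_i$ delivers $h_V(t) - h_V(t-1) \leq \delta t^{d-1} + O_d(1)$.

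Telescoping from $t=0$ to $t=T$ and bounding $\sum_{t=1}^T t^{d-1} \leq T^d$ produces $h_V(T) \leq \delta T^d + O_d(1)$, matching the claimed bound up to the precise additive constant. The main obstacle lies precisely in the step $\dim(R/(I(V) + (\ell)))_t \leq \sum_i m_i \dim(R/I(Z_i))_t$: the obvious surjection $R/(I(V)+(\ell)) \twoheadrightarrow R/\sqrt{I(V)+(\ell)}$ runs the wrong way for Hilbert-function comparisons, so one is forced to argue scheme-theoretically through a primary decomposition of $I(V) + (\ell)$ rather than simply passing to the radical. Securing the clean additive constant $+\dim V$ (rather than the loose $O_d(\delta)$ emerging from naive telescoping) then requires a slightly sharper inductive estimate, for instance replacing $\sum_{t=1}^T t^{d-1} \leq T^d$ by $\binom{T+d-1}{d}$ and verifying the resulting refined bound $\delta \binom{T+d-1}{d} + \binom{T+d-1}{d-1} \leq \delta T^d + d$ for all $T \geq 1$.
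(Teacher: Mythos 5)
The paper does not actually supply a proof of this theorem; it is cited from Chapter~9 of Nesterenko--Amoroso [NA]. Your overall strategy --- inducting on $\dim V$ via a generic hyperplane section, using the short exact sequence for multiplication by $\ell$ to pass to first differences of the Hilbert function, and then invoking Bezout together with the inductive hypothesis on the components of $V\cap V(\ell)$ --- is the right skeleton and close in spirit to the standard treatments. However, the step you flag as ``the main obstacle'' is not a mere bookkeeping nuisance: the inequality
\[
\dim\bigl(R/(I(V)+(\ell))\bigr)_t \;\le\; \sum_i m_i\,\dim\bigl(R/I(Z_i)\bigr)_t
\]
is \emph{false} in general, and the prime-filtration / primary-decomposition route you sketch does not repair it. A prime filtration of $M=R/(I(V)+(\ell))$ with factors $(R/P_j)(-a_j)$ gives $\dim M_t=\sum_j \dim(R/P_j)_{t-a_j}$. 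The $P_j$ that are \emph{minimal} over $I(V)+(\ell)$ do appear $m_i$ times, and after using monotonicity of Hilbert functions of graded domains they contribute at most your right-hand side. But when $R/I(V)$ is not Cohen--Macaulay, the filtration also contains factors attached to \emph{embedded} associated primes --- most dangerously the irrelevant ideal $(x_0,\dots,x_D)$ --- and those terms are additional, not absorbed into the multiplicities $m_i$.

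A concrete counterexample is the smooth rational quartic curve $C\subset\bP^3(\oF)$, with $d=1$ and $\delta=4$. Its coordinate ring has depth $1$ (the curve is non-degenerate of degree $4$ in $\bP^3$, so it is not projectively normal in degree $1$); hence for a general linear form $\ell$ the irrelevant ideal is an associated prime of $R/(I(C)+(\ell))$. One computes $h_C(t)=1,4,9,13,17,\dots$, so the first-difference sequence $\dim(R/(I(C)+(\ell)))_t$ is $1,3,5,4,4,\dots$, whereas four general points in the hyperplane $\cong\bP^2$ have Hilbert function $1,3,4,4,\dots$. At $t=2$ this gives $5>4=\sum_i m_i\dim(R/I(Z_i))_2$, so your key inequality fails --- even though the theorem's conclusion $h_C(t)\le 4t+1$ does hold (with equality for $t\ge 2$). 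To push the hyperplane-section induction through one must either explicitly control the embedded contributions, or sidestep them altogether, for instance via Noether normalization: realize $R/I(V)$ as a finite graded module of generic rank $\deg V$ over a polynomial subring $\oF[\ell_0,\dots,\ell_d]$ and read off the Hilbert-function bound from the module structure. As written, your argument leaves open precisely the gap it identifies, and the final ``clean additive constant'' computation already presupposes the inequality that fails.
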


\section{Point-variety incidences}\label{sect:variety}

\subsection{Upper bounds via extremal graph theory}

In this section, we prove Theorem \ref{thm:varieties upper} through the following approach. We construct a bipartite graph $H_{d, \Delta}$ with the property that $G(\cP, \cV)$ does not contain an induced copy of $H_{d, \Delta}$. Then, we show that any $K_{s, s}$-free bipartite graph with sides of size $m, n$ which does not contain an induced copy of $H_{d, \Delta}$ has at most $O(m^{\frac{d}{d+1}} n)$ edges.

Let us begin by describing the forbidden induced bipartite subgraph $H=H_{d, \Delta}$. Let $A$ and $B$ be the parts of $H_{d,\Delta}$, and $k=2^{\Delta^d}+1$. Part $B$ consists of $d+1$ ``layers" of vertices, which we describe as follows. The first two layers contain one vertex each, denoted by $v_1$ and $v_2$, respectively. For $3\leq \ell\leq d+1$, the $\ell$-th layer has $k^{\ell-2}$ vertices, denoted by $v_\ell^{(i_3, \dots, i_\ell)}$, where $i_3, \dots, i_\ell\in [k]$.

Next, we describe part $A$. For each $3\leq \ell\leq d+1$ and each sequence $(i_3, \dots, i_\ell)\in [k]^{\ell-2}$, we add $k$ vertices $w_{\ell,1}^{(i_3,\dots,i_{\ell})},\dots,w_{\ell,k}^{(i_3,\dots,i_{\ell})}$ to $A$ whose neighbours are $v_1, v_2, v_3^{(i_3)}, v_4^{(i_3, i_4)}, \dots, v_\ell^{(i_3, \dots, i_\ell)}$. Note that every vertex of $A$ has degree at most $d+1$. Furthermore, observe that vertices $v_{t}^{(i_3, \dots, i_t)}$ and $v_r^{(j_3, \dots, j_r)}$ have a common neighbour if and only if one of the sequences $(i_3,\dots, i_t)$ and $(j_3, \dots, j_r)$ is a prefix of the other.

\begin{proposition}\label{prop:forbidden induced subgraph}
If $\cP$ is a set of points and $\cV$ is a set of $d$-dimensional varieties of degree at most $\Delta$, the incidence graph $G(\cP, \cV)$ does not contain $H_{d, \Delta}$ as an induced subgraph, where the vertices of $A$ correspond to points and the vertices of $B$ correspond to varieties.
\end{proposition}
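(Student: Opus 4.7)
The plan is to assume for contradiction that $G(\cP,\cV)$ contains an induced copy of $H_{d,\Delta}$ and then inductively construct a chain of irreducible varieties $W_1\supsetneq W_2\supsetneq\dots\supsetneq W_{d+1}$ with $\dim W_\ell\le d-\ell+1$, such that $W_{d+1}$ contains at least two distinct points of $\cP$. Since $W_{d+1}$ is then a zero-dimensional irreducible variety over $\oF$ (hence a single geometric point), while it must contain two distinct geometric points, we obtain the desired contradiction. Label the varieties realizing vertices of $B$ as $V_1,V_2,V_\ell^{(i_3,\dots,i_\ell)}$ and the points realizing vertices of $A$ as $p_{\ell,j}^{(i_3,\dots,i_\ell)}$, and focus only on the $k^d$ layer-$(d+1)$ points, each labelled by a sequence $\sigma\in[k]^{d-1}$. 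At step $\ell$ the inductive data consist of indices $i_3^*,\dots,i_\ell^*$ and an irreducible component $W_\ell$ of $V_1\cap V_2\cap V_3^{(i_3^*)}\cap\dots\cap V_\ell^{(\sigma_\ell^*)}$ retaining a controlled fraction of the points whose label extends $\sigma_\ell^*:=(i_3^*,\dots,i_\ell^*)$. The base $W_1:=V_1$ is immediate; for $W_2$, applying Theorem~\ref{thm:preliminary} to the distinct dimension-$d$ varieties $V_1\neq V_2$ shows that $V_1\cap V_2$ has dimension at most $d-1$ and components of total degree at most $\Delta^2$, so one may take $W_2$ to be a component containing at least a $1/\Delta^2$-fraction of the $k^d$ points.

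The key observation driving the inductive step $\ell\to\ell+1$ is the following: if $W_\ell$ contains two surviving points whose $i_{\ell+1}$-coordinates differ, then no choice of $i_{\ell+1}^*\in[k]$ can make $W_\ell\subseteq V_{\ell+1}^{(\sigma_\ell^*,i_{\ell+1}^*)}$, since the induced-subgraph hypothesis forces any point whose $i_{\ell+1}$-coordinate differs from $i_{\ell+1}^*$ to lie in $W_\ell$ but outside $V_{\ell+1}^{(\sigma_\ell^*,i_{\ell+1}^*)}$. Assuming this, the plan is to pigeonhole over the $k$ possible $i_{\ell+1}$-values, pick the most popular $i_{\ell+1}^*$ (retaining at least a $1/k$-fraction of the surviving points), apply Theorem~\ref{thm:preliminary} to decompose $W_\ell\cap V_{\ell+1}^{(\sigma_{\ell+1}^*)}$ into at most $\deg(W_\ell)\cdot\Delta\le\Delta^{\ell+1}$ components of dimension strictly less than $\dim W_\ell$, and then select $W_{\ell+1}$ as the component retaining a $1/\Delta^{\ell+1}$-fraction of the chosen group.

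The main obstacle is certifying that the hypothesis ``$W_\ell$ contains points with at least two distinct $i_{\ell+1}$-values'' propagates through every inductive step. Since the number of layer-$(d+1)$ points with any fixed prefix of length $\ell-1$ (i.e., fixed coordinates $i_3,\dots,i_{\ell+1}$) equals $k^{d-\ell+1}$, this hypothesis holds whenever the surviving count $N_\ell$ strictly exceeds $k^{d-\ell+1}$. Propagating the per-step multiplicative loss of $k\cdot\Delta^{\ell+1}$ starting from $N_2\ge k^d/\Delta^2$ yields a lower bound of the form $N_\ell\ge k^{d-\ell+2}/\Delta^{P(d)}$ for a polynomial $P$ depending only on $d$, so the required condition reduces to $k\ge\Delta^{Q(d)}$ for another polynomial $Q$, which is comfortably satisfied by the choice $k=2^{\Delta^d}+1$. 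The induction then outputs $W_{d+1}$ with $N_{d+1}\ge 2$ distinct layer-$(d+1)$ points of $\cP$ inside it, yielding the advertised contradiction with the fact that a zero-dimensional irreducible variety over $\oF$ is a single geometric point.
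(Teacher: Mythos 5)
Your approach is genuinely different from the paper's, and the core idea is sound, but there is a real (though fixable) bookkeeping gap in the numerics.

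The paper's inductive invariant tracks \emph{all} components of the iterated intersection that have dropped in dimension and shows their union catches every common neighbour; the key step is a pigeonhole over the $2^{t}\le 2^{\Delta^{d}}$ possible subsets $T_j\subseteq\{Z_1,\dots,Z_t\}$ of components contained in each candidate $V_{\ell+1}^{(\cdots,j)}$, which produces two indices $j,j'$ with $T_j=T_{j'}$. Choosing $j$ then makes the argument work because any common neighbour inside a $Z_i\in T_j=T_{j'}$ would be adjacent to both $v_{\ell+1}^{(\cdots,j)}$ and $v_{\ell+1}^{(\cdots,j')}$, which is forbidden in $H_{d,\Delta}$. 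Your invariant instead tracks a \emph{single} irreducible component $W_\ell$ together with a count $N_\ell$ of surviving layer-$(d+1)$ points inside it, and your key observation — that having two surviving points with distinct $i_{\ell+1}$-coordinates already forces $W_\ell\not\subseteq V_{\ell+1}^{(\sigma_\ell^*,i_{\ell+1}^*)}$ for \emph{every} choice of $i_{\ell+1}^*$ — is correct and is a nice, clean way to ensure Theorem~\ref{thm:preliminary} applies. Conceptually this is simpler (one variety to track, no containment-pattern pigeonhole), at the cost of a multiplicative loss of $k\cdot\Delta^{\ell+1}$ per step.

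The gap is in the final numerical claim. Propagating your recursion $N_{\ell+1}\ge N_\ell/(k\Delta^{\ell+1})$ from $N_2\ge k^d/\Delta^2$ gives $N_{d+1}\ge k/\Delta^{\frac{(d+1)(d+2)}{2}-1}$, so you need $k>\Delta^{\frac{(d+1)(d+2)}{2}-1}$. With the paper's choice $k=2^{\Delta^d}+1$ this is \emph{not} ``comfortably satisfied'': for $\Delta=2,d=2$ you need $k>2^5=32$ but $k=17$, and similarly it fails for $\Delta=2,d=3$ and $\Delta=3,d=1$. Your argument would therefore prove the proposition only for the bulk of the parameter range, not as stated. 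The fix is easy — replace $k$ in the definition of $H_{d,\Delta}$ by, say, $\max\{2^{\Delta^d},\Delta^{d^2}\}+1$, which has no downstream effect since only the fact that $k=k(d,\Delta)$ is a constant matters for Lemma~\ref{lemma:induced_free} — but as written the claim does not follow for all $d,\Delta$. (As a side remark, your per-step factor-$k$ loss is exactly what the paper's containment-pattern pigeonhole is designed to avoid, which is why the paper can get away with the pigeonhole threshold $k>2^{\Delta^d}$ rather than $k>\Delta^{\Theta(d^2)}$.)
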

\begin{proof}
Assume that $G(\cP,\cV)$ contains an induced copy of $H_{d,\Delta}$ and let  $V_\ell^{(i_3, \dots, i_\ell)}$ be the variety corresponding to the vertex $v_\ell^{(i_3, \dots, i_\ell)}$. We show by induction on $2\leq \ell\leq d+1$ that one can choose a sequence $i_3,\dots,i_{\ell}$  with the following property. Let $Z_1, \dots, Z_t$ be the irreducible components of the intersection $V_1\cap V_2\cap\dots\cap V_\ell^{(i_3, \dots, i_\ell)}$ that satisfy $\dim(Z_i)\leq d-\ell+1$. Then the union $\bigcup_{i=1}^t Z_i$ contains all points corresponding to the common neighbours of $v_1, \dots, v_\ell^{(i_3, \dots, i_\ell)}$ in $A$.

For $\ell=2$, this statement is obvious, since all components of $V_1\cap V_2$ have dimension at most $d-1$ by Theorem~\ref{thm:preliminary}, noting that $V_1$ and $V_2$ are distinct varieties of dimension $d$. To show the inductive step, suppose we found a sequence $i_3,\dots,i_{\ell}$  satisfying the above property. Let $Z_1, \dots, Z_t$ be the irreducible components of the intersection of these varieties satisfying $\dim(Z_i)\leq d-\ell+1$. By iterated application of Theorem~\ref{thm:preliminary}, one can deduce that $t\leq \Delta^\ell$.

For each variety $V_{\ell+1}^{(i_3, \dots, i_\ell, j)}$, $j\in \{1, \dots, k\}$, let us denote by $T_j$ the set of components among $Z_1, \dots, Z_t$ which are fully contained in $V_{\ell+1}^{(i_3, \dots, i_\ell, j)}$. As $k=2^{\Delta^d}> 2^t$, there exist two sets $T_{j}$ and $T_{j'}$ which are identical. In other words, the varieties $V_{\ell+1}^{(i_3, \dots, i_\ell, j)}$ and $V_{\ell+1}^{(i_3, \dots, i_\ell, j')}$ contain the exact same components $Z_i$. We claim that the sequence $i_3,\dots,i_{\ell},j$ then satisfies the required conditions and suffices to perform the inductive step.

To see this, note that $V_{\ell+1}^{(i_3, \dots, i_\ell, j)}$ intersects all components $Z_i\notin T_j$ in subvarieties of dimension at most $d-\ell$ (or in the empty set). Thus, to complete the inductive step, it suffices to show that all common neighbours of $v_1, \dots, v_{\ell+1}^{(i_3, \dots, i_\ell, j)}$ are contained in the components $Z_i\notin T_j$. If there is a common neighbour $P$ of these vertices in a component $Z_i\in T_j$, it belongs to the variety $V_{\ell+1}^{(i_3, \dots, i_\ell, j')}$, and thus it is connected to the vertex $v_{\ell+1}^{(i_3, \dots, i_\ell, j')}$. But this is impossible, since $v_{\ell+1}^{(i_3, \dots, i_\ell, j)}$ and $v_{\ell+1}^{(i_3, \dots, i_\ell, j')}$ have no common neighbours in $H_{d,\Delta}$.

Taking $\ell=d+1$, we get a sequence $i_3,\dots,i_{d+1}$ such that $V_1,\dots, V_{d+1}^{(i_3,\dots,i_{d+1})}$ has the following property. If $Z_1,\dots,Z_t$ are the $0$-dimensional components of  $V_1\cap\dots\cap V_{d+1}^{(i_3,\dots,i_{d+1})}$, then $\bigcup_{i=1}^{t}Z_i$ contains $k$ points, corresponding to the common neighbours of $v_1,\dots,v_{d+1}^{(i_3,\dots,i_{d+1})}$. However, note that an iterated application of Theorem~\ref{thm:preliminary} implies $t\leq \Delta^{d+1}$ and so $t<k$. But this is a contradiction, since each $Z_i$ is a single point.
\end{proof}

Next, given a bipartite graph $H$, we prove a general upper bound on the number of edges in a $K_{s,s}$-free and induced $H$-free bipartite graph. A  similar statement was proved recently by Hunter and the authors of this paper \cite{HMST}, but their result is not directly applicable in case the host graph is bipartite with parts of unequal sizes. Fortunately, we can reuse most of the key auxiliary lemmas to adapt to this scenario.

Let $H=(A,B,E)$ be a bipartite graph such that every vertex in $A$ has degree at most $k$. Given a graph $G$, we say that a set of vertices $S\subset V(G)$ is $(H,k,s)$-rich if for every $T\subset S$, $|T|\leq k$, we have 
$$|\{v\in V(G)\backslash S:N(v)\cap S=T\}|\geq (4|A|s)^{|A|}.$$
We need the following two results from \cite{HMST}.

\begin{lemma}[Lemma 2.3 in \cite{HMST}]\label{lemma:HMST2.3}
    Let $G$ be a graph not containing $K_{s,s}$. If $G$ contains an $(H,k,s)$-rich independent set $S$ of size $|B|$, then $G$ contains $H$ as an induced subgraph in which $B$ is embedded into $S$.
\end{lemma}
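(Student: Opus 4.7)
The plan is to fix an arbitrary bijection $\psi : B \to S$ and then, for each $a \in A$, choose a vertex $\phi(a) \in V(G) \setminus S$ such that $\phi|_A$ is injective, the image $\phi(A)$ is independent in $G$, and $N_G(\phi(a)) \cap S$ equals $T_a := \psi(N_H(a) \cap B)$. Since every vertex of $A$ has degree at most $k$ in $H$, we have $|T_a| \le k$, so the $(H,k,s)$-richness of $S$ provides candidate sets $C_a := \{v \in V(G) \setminus S : N_G(v) \cap S = T_a\}$ of size at least $(4|A|s)^{|A|}$. If such a $\phi$ is produced, then $\psi \cup \phi$ is an induced copy of $H$ with $B$ embedded into $S$: the $A$–$B$ edges are exactly the prescribed ones, no extra edges appear between $\phi(A)$ and $S$ since $N_G(\phi(a)) \cap S = T_a$, $\phi(A)$ is independent by construction, and $S$ is independent by hypothesis.

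To build $\phi$ I would enumerate $A = \{a_1, \dots, a_{|A|}\}$ in any order and proceed greedily, maintaining shrinking sets $R^{(i)}_{a_j} \subseteq C_{a_j}$ of still-available candidates for $j > i$, with the invariant $|R^{(i)}_{a_j}| \ge (4|A|s)^{|A|-i}$. At step $i+1$ I pick $\phi(a_{i+1}) \in R^{(i)}_{a_{i+1}}$ and set $R^{(i+1)}_{a_j} := R^{(i)}_{a_j} \setminus (N_G(\phi(a_{i+1})) \cup \{\phi(a_{i+1})\})$. To preserve the invariant, the chosen vertex must not be \emph{bad for $j$}, where $v$ is bad for $j$ if $|N_G(v) \cap R^{(i)}_{a_j}| > (1 - \tfrac{1}{2|A|s})|R^{(i)}_{a_j}|$; such a $v$ would cause $R^{(i)}_{a_j}$ to shrink faster than the invariant allows.

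The main step, and the main obstacle, is to show that most vertices of $R^{(i)}_{a_{i+1}}$ are not bad for any $j$. For fixed $j$, double-count pairs $(v, S')$ with $v$ bad for $j$ and $S' \in \binom{N_G(v) \cap R^{(i)}_{a_j}}{s}$. Since $G$ is $K_{s,s}$-free, each $s$-subset $S' \subseteq R^{(i)}_{a_j}$ has at most $s-1$ common neighbours among the bad vertices (otherwise $S'$ together with $s$ such neighbours forms $K_{s,s}$, the requisite disjointness being automatic since a vertex cannot be its own neighbour). This yields
\[
|\{\text{bad }v\text{ for }j\}| \cdot \binom{(1-\tfrac{1}{2|A|s})|R^{(i)}_{a_j}|}{s} \;\le\; (s-1)\binom{|R^{(i)}_{a_j}|}{s},
\]
and using $(1-\tfrac{1}{2|A|s})^s \ge \tfrac12$ gives at most $2(s-1)$ bad vertices per $j$, hence at most $2|A|s$ globally bad vertices by a union bound. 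Since $|R^{(i)}_{a_{i+1}}| \ge 4|A|s$ throughout (by the invariant, as $|A|-i \ge 1$), a valid choice always exists. After $|A|$ rounds the greedy produces the desired $\phi$. The remaining work is routine bookkeeping: absorbing the $-1$ corrections from removing $\phi(a_{i+1})$ itself into the invariant, and observing that distinctness is automatic because $\phi(a_{i+1})$ is excluded from all later candidate sets, which in particular handles the case where several $a$'s share the same $T_a$ (and hence the same $C_a$).
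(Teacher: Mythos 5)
Your approach — fix $\psi:B\to S$, use richness to get candidate pools $C_a$, then greedily pick $\phi(a)$'s while discarding the chosen vertex and its neighbourhood from all later pools, with a $K_{s,s}$-free double count controlling the number of ``bad'' choices — is the natural argument given the definitions of $(H,k,s)$-rich and is, as far as I can tell, the same strategy as the cited Lemma 2.3 of \cite{HMST}. The bookkeeping (invariant $|R^{(i)}_{a_j}|\geq (4|A|s)^{|A|-i}$, injectivity and independence of $\phi(A)$ by construction, the verification that $\psi\cup\phi$ is an induced embedding, and the handling of repeated $T_a$'s) is all correct.

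The one flaw is a false intermediate inequality: you invoke $\binom{(1-\epsilon)N}{s}/\binom{N}{s}\geq (1-\epsilon)^s$, but this fails for $s\geq 2$ since the factors $\frac{(1-\epsilon)N-i}{N-i}$ drop strictly below $1-\epsilon$ once $i>0$ (for example $N=8,\,s=2,\,\epsilon=1/4$ gives $\binom{6}{2}/\binom{8}{2}=15/28<9/16$). The conclusion you want, however, is still true and easy to recover using the invariant: since each $R^{(i)}_{a_j}$ at a step where bad vertices matter has $N\geq 4|A|s$, one has $s\leq N/4$, so
\[
\frac{\binom{(1-\epsilon)N}{s}}{\binom{N}{s}}\;\geq\;\Bigl(\tfrac{(1-\epsilon)N-s+1}{N-s+1}\Bigr)^{s}\;\geq\;\Bigl(1-\tfrac{\epsilon N}{N-s}\Bigr)^{s}\;\geq\;\Bigl(1-\tfrac{2}{3|A|s}\Bigr)^{s}\;\geq\;1-\tfrac{2}{3|A|},
\]
which is at least $1/3$ for $|A|=1$ (a case where the bad-vertex step is vacuous anyway) and at least $2/3$ for $|A|\geq 2$. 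Either way you get fewer than $3|A|s<4|A|s\leq |R^{(i)}_{a_{i+1}}|$ bad vertices in total, so the greedy choice still exists. So the slip is in the justification of a bound, not in the truth of the bound; the proof goes through once this step is repaired.
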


\begin{proposition}[Proposition 2.4 in \cite{HMST}]\label{prop:HMST2.4}
    Let $G$ be a $K_{s,s}$-free graph and $X\subseteq V(G)$ a set of at least $(4|A||B| s)^{4|B|+10}$ vertices in which every $k$-tuple of vertices has at least $(4|A||B| s)^{2 |V(H)|}$ common neighbours. Then $X$ contains an $(H,k,s)$-rich independent set of size $|B|$. 
\end{proposition}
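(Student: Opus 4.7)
The plan is to prove this via a dependent random choice argument combined with a cleaning step to kill bad patterns, finishing with an application of the $K_{s,s}$-free hypothesis to pass to an independent subset. First, I would pick a random subset $W \subseteq X$ of size approximately $C \cdot |B|$, where $C$ is a large constant depending on $|V(H)|$, $k$, and $s$. The hypothesis that every $k$-tuple of vertices of $X$ has at least $(4|A||B| s)^{2|V(H)|}$ common neighbours, together with linearity of expectation, guarantees that most $k$-subsets of $W$ are realised by many vertices outside $W$, in the sense that many $v \in V(G)$ have $N(v) \cap W$ containing the chosen subset.

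Next, I would call a subset $T \subseteq W$ with $|T| \leq k$ \emph{bad} if fewer than $(4|A|s)^{|A|}$ vertices $v \in V(G) \setminus W$ satisfy $N(v) \cap W = T$. Using inclusion--exclusion to convert the ``$\supseteq T$'' count into an ``$=T$'' count, and then a Markov-type estimate on the expected number of bad patterns, one obtains a realisation of $W$ in which the number of bad subsets is a small fraction of $|W|$. I would then delete one vertex from $W$ per bad subset, obtaining a set $W' \subseteq W$ of size at least $|W|/2$ in which every $k$-subset is \emph{good}: it has at least $(4|A|s)^{|A|}$ vertices realising it exactly as their $W$-neighbourhood.

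Since $G$ is $K_{s,s}$-free and $|W'|$ is much larger than any polynomial in $|B|, |V(H)|, s$, the induced subgraph $G[W']$ has few edges by the K\H{o}v\'ari--S\'os--Tur\'an theorem, so I can extract an independent subset $S \subseteq W'$ of size $|B|$. To finish, I would verify that $S$ is $(H,k,s)$-rich: for every $T \subseteq S$ with $|T| \leq k$, there must be at least $(4|A|s)^{|A|}$ vertices outside $S$ whose neighbourhood into $S$ equals $T$. The subtlety is that goodness was defined relative to $W$, so restricting to $S$ could in principle turn a good pattern into a bad one by killing extension vertices that happened to have additional neighbours in $W' \setminus S$.

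The main obstacle is precisely this last step: keeping richness alive when passing from $W'$ to the independent set $S$. I expect the resolution to come from the $K_{s,s}$-free hypothesis, which bounds codegrees and thus forces only a small fraction of extension vertices for a given pattern to have further neighbours in $W' \setminus S$; the remaining extensions certify richness of $S$ itself, and the slack in the exponents $(4|A||B|s)^{4|B|+10}$ versus $(4|A|s)^{|A|}$ is designed to absorb exactly this loss. Alternatively, one can interleave the cleaning step with the independent set extraction, iteratively pruning endpoints of edges while maintaining both the goodness and size guarantees, which is how dependent random choice arguments customarily handle simultaneous constraints.
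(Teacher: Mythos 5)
This proposition is quoted verbatim from \cite{HMST} (Proposition~2.4 there) and the present paper does not reprove it, so there is no in-paper proof to compare against; I will assess your sketch on its own terms.

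The most concrete gap is in your Step~5. You extract the independent set $S$ from $W'$ by invoking the K\H{o}v\'ari--S\'os--Tur\'an theorem, arguing $G[W']$ has few edges since $G$ is $K_{s,s}$-free. But that route is quantitatively too weak: KST gives $e(G[W'])=O(|W'|^{2-1/s})$, hence an independent set of size roughly $|W'|^{1/s}$, so to guarantee an independent set of size $|B|$ you would need $|W'|\gtrsim |B|^{s}$, which is exponential in $s$. Your $W$ has size $C|B|\leq |X|$, and $|X|\geq (4|A||B|s)^{4|B|+10}$ is only \emph{polynomial} in $s$; for large $s$ there is simply no room for a $W'$ of the required size, so this step would fail. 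The correct observation is different: a $K_{s,s}$-free graph cannot contain $K_{2s}$ (split the clique into two halves), so $G[X]$ has clique number $<2s$, and the \emph{Ramsey} bound $R(2s,|B|)\le\binom{2s+|B|-2}{|B|-1}$, which is polynomial in $s$ of degree $|B|-1$, yields an independent set of size $|B|$ from any set of size $(4|A||B|s)^{O(|B|)}$. This is the tool matching the hypothesis.

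Two further remarks. First, in Step~3 the claim that ``inclusion--exclusion plus Markov'' controls the bad patterns is not really supported: to lower-bound $|\{v:N(v)\cap W=T\}|$ you must upper-bound the overshoot $|\{v\in N^*(T):N(v)\cap(W\setminus T)\neq\emptyset\}|$, and the only source of such an upper bound is the $K_{s,s}$-free hypothesis applied to vertices with unusually high degree into $X$ (e.g.\ at most $s-1$ vertices can have $\deg_X(v)\geq |X|(1-\tfrac1{2s})$); moreover badness is a lower-tail event, so Markov on its own does not bound $\Pr[T\text{ is bad}]$. Neither ingredient appears explicitly in your outline. Second, the subtlety you worry about in Step~6 is actually not an issue given the definition of richness: if $N(v)\cap W'=T$ exactly and $T\subseteq S\subseteq W'$, then automatically $N(v)\cap S=T$, so exact-equality richness of $W'$ directly transfers to any subset $S\supseteq T$ --- the extension vertices you fear losing have, by definition, no extra neighbours in $W'\setminus S$. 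So the genuine obstacles are in Steps~3 and~5, not Step~6.
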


Furthermore, we use the following simple statement about independent sets in hypergraphs.

\begin{proposition}\label{prop:hyp_ind}
    Let $\cH$ be a $k$-uniform hypergraph with $N$ vertices and $M$ edges. Then $\cH$ contains an independent set of size at least $N^{\frac{k}{k-1}}/(4(M+N)^{1/(k-1)})$.
\end{proposition}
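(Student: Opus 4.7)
The plan is to prove this by the standard random-sampling (probabilistic deletion) argument. I would pick each vertex of $\mathcal{H}$ independently with probability $p\in(0,1]$ to form a random set $S$. Then $\mathbb{E}[|S|]=pN$, and since every edge has $k$ vertices, the expected number of edges induced in $S$ is $p^k M$. Deleting one vertex from each such edge yields an independent set of expected size at least $pN-p^kM$; hence there exists an independent set of this size in $\mathcal{H}$.

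I would then optimize the choice of $p$. The natural choice is $p=(N/(kM))^{1/(k-1)}$, which balances the two terms: it makes $p^{k-1}M/N=1/k$, so
\[pN-p^kM \;=\; pN\Bigl(1-\tfrac{1}{k}\Bigr) \;=\; \tfrac{k-1}{k}\cdot\frac{N^{k/(k-1)}}{(kM)^{1/(k-1)}}.\]
This choice is legitimate precisely when $p\leq 1$, i.e.\ when $M\geq N/k$. In that regime $M+N\leq 2M$ (if $M\geq N$) or $M+N\leq (k+1)M$ (if $N/k\leq M<N$), so a direct comparison shows
\[\tfrac{k-1}{k}\cdot\frac{N^{k/(k-1)}}{(kM)^{1/(k-1)}} \;\geq\; \frac{N^{k/(k-1)}}{4(M+N)^{1/(k-1)}},\]
as required.

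For the remaining case $M<N/k$, I would simply take $p=1$: the whole vertex set, minus one vertex from each edge, is independent and has size at least $N-M\geq (k-1)N/k\geq N/2$. Since $M+N\geq N$ gives $(M+N)^{1/(k-1)}\geq N^{1/(k-1)}$, the target bound $N^{k/(k-1)}/(4(M+N)^{1/(k-1)})$ is at most $N/4\leq N/2$, so it is satisfied. Combining the two cases completes the proof.

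There is no real obstacle here; the only care needed is to verify the constants in both regimes ($M\geq N/k$ and $M<N/k$) and to ensure that the chosen $p$ lies in $(0,1]$, which is exactly what the case split handles.
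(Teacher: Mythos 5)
Your proof is correct and follows essentially the same probabilistic deletion argument as the paper. The only difference is cosmetic: the paper chooses $p=(N/(2(M+N)))^{1/(k-1)}$, which is automatically in $(0,1]$ and yields $pN-p^kM\geq pN/2$ directly, thereby sidestepping the case split on whether $M\geq N/k$ that your choice $p=(N/(kM))^{1/(k-1)}$ necessitates.
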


\begin{proof}
Let $p=(N/2(M+N))^{1/(k-1)}$. Let $X\subseteq V(\cH)$ be a random sample in which each vertex is included independently with probability $p$. Then $\cH[X]$ contains an independent set of size at least $|X|-e(\cH[X])$, as we can remove a single vertex from every edge to get an independent set. But $\mathbb{E}(|X|-e(\cH[X]))=pN-p^k M\geq \frac{pN}{2}$, so there is a choice for $X$ such that $|X|-e(\cH[X])\geq pN/2\geq N^{\frac{k}{k-1}}/(4(M+N)^{1/(k-1)})$.
\end{proof}

Now we are ready to prove our bound, which is a simple combination of the previous two results and the dependant random choice method \cite{FS}.

\begin{lemma}\label{lemma:induced_free}
Let $G=(U,V;E)$ be a bipartite graph, $|U|=m$ and $|V|=n$, and let $H=(A,B;E)$ be a bipartite graph such that every vertex in $A$ has degree at most $k\geq 2$. If $G$ contains no induced copy of $H$ in which $A$ is embedded to $U$, and $G$ is $K_{s,s}$-free, then
$$E(G)=O_{H,s}\left(m^{\frac{k-1}{k}}n+m\right).$$
\end{lemma}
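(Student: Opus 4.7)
The plan is to derive a contradiction by exhibiting an induced copy of $H$ in $G$. Suppose $e(G) > C(m^{(k-1)/k}n + m)$ for a sufficiently large constant $C = C(H,s)$. Set $n_0 := (4|A||B|s)^{4|B|+10}$ and $M_0 := (4|A||B|s)^{2|V(H)|}$, the constants appearing in Proposition~\ref{prop:HMST2.4}. The aim is to construct a subset $X \subseteq V$ of size $n_0$ in which every $k$-tuple has at least $M_0$ common neighbours in $U$; granted such an $X$, Proposition~\ref{prop:HMST2.4} produces an $(H,k,s)$-rich independent set of size $|B|$ inside $X$, and Lemma~\ref{lemma:HMST2.3} then embeds $H$ as an induced subgraph with $A$ mapped into $U$, contradicting the hypothesis.

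To construct $X$, I would combine dependent random choice with Proposition~\ref{prop:hyp_ind}. Sample a multiset $T \subseteq U$ of $t$ vertices uniformly and independently with replacement, where $t = t(H,s)$ is a constant to be fixed, and set $Y := N(T) = \{v \in V : T \subseteq N(v)\}$. Let $\mathcal{H}_Y$ be the $k$-uniform hypergraph on $Y$ whose edges are the $k$-subsets $S \subseteq Y$ with $|N(S)| < M_0$. By Jensen's inequality applied to the convex map $x \mapsto x^t$ and the density hypothesis $\bar{d}_V := e(G)/n \geq Cm^{(k-1)/k}$,
\[
\mathbb{E}[|Y|] = \sum_{v \in V}\left(\frac{\deg(v)}{m}\right)^{\!t} \geq n\left(\frac{\bar{d}_V}{m}\right)^{\!t} \geq n\left(\frac{C}{m^{1/k}}\right)^{\!t},
\]
while each bad $k$-subset $S$ satisfies $\Pr[S \subseteq Y] = (|N(S)|/m)^t < (M_0/m)^t$, giving $\mathbb{E}[e(\mathcal{H}_Y)] \leq \binom{n}{k}(M_0/m)^t$.

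Combining these estimates with the convex Jensen bound $\mathbb{E}[|Y|^k] \geq (\mathbb{E}[|Y|])^k$, the ratio $\mathbb{E}[|Y|^k]/\mathbb{E}[e(\mathcal{H}_Y)]$ is at least $k!\,(C^k/M_0)^t$, which can be made at least $2(4n_0)^{k-1}$ by picking $C \geq 2M_0^{1/k}$ and $t$ a sufficiently large constant depending only on $H$ and $s$. A standard averaging argument then delivers a realization of $T$ for which $|Y|^k \geq 2(4n_0)^{k-1}\,e(\mathcal{H}_Y)$ and $|Y|$ is at least a constant multiple of $n_0$. Applying Proposition~\ref{prop:hyp_ind} to $\mathcal{H}_Y$ extracts an independent set of size $\geq n_0$, which becomes the desired $X$; by independence in $\mathcal{H}_Y$, every $k$-subset of $X$ automatically has at least $M_0$ common neighbours in $U$.

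The main technical obstacle will be extracting, from a single random sample, simultaneous control of both $|Y|$ (which must be large) and $e(\mathcal{H}_Y)$ (which must be small). A direct dependent random choice with $t = M_0$ produces a usable $Y$ only when $n \gtrsim (m^{1/k}/C)^{M_0}$, so Proposition~\ref{prop:hyp_ind} is essential to cover smaller values of $n$; the trade-off between making $|Y|$ large and enforcing the $M_0$-common-neighbourhood guarantee is what forces the combined DRC–hypergraph argument, and tuning $t$ and $C$ against one another constitutes the technical heart of the proof. Residual boundary cases with $n$ bounded by a constant follow immediately from the trivial estimate $e(G) \leq mn$.
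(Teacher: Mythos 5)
Your overall plan matches the paper's: locate a subset $X\subseteq V$ whose $k$-tuples all have many common neighbours in $U$ (via dependent random choice plus a hypergraph independent set from Proposition~\ref{prop:hyp_ind}), then feed $X$ into Proposition~\ref{prop:HMST2.4} and Lemma~\ref{lemma:HMST2.3} to embed an induced copy of $H$ with $A$ mapped into $U$. The paper, however, runs dependent random choice with a \emph{single} random vertex $u\in U$, setting $X_0=N(u)$, and lets the constant $K$ alone absorb all the losses; you instead sample a multiset $T$ of $t$ vertices and propose to take $t$ a large constant.

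The gap is in the step you call ``a standard averaging argument.'' For Proposition~\ref{prop:hyp_ind} to produce an independent set of size $n_0$ you need both $|Y|^k\geq 2(4n_0)^{k-1}e(\mathcal{H}_Y)$ and $|Y|$ at least a constant multiple of $n_0$, and the clean way to extract both from one expectation is to prove $\mathbb{E}\big[|Y|^k-2(4n_0)^{k-1}\big(e(\mathcal{H}_Y)+|Y|\big)\big]>0$. But once you compare $|Y|^k$ against the linear term $2(4n_0)^{k-1}|Y|$ using Jensen, you are forced to require $(\mathbb{E}[|Y|])^{k-1}\geq 2(4n_0)^{k-1}$, i.e.\ $\mathbb{E}[|Y|]$ must itself be $\Omega(n_0)$. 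Your only lower bound is $\mathbb{E}[|Y|]\geq n(C/m^{1/k})^{t}$, obtained from $\bar d_V\geq Cm^{(k-1)/k}$ alone, and this is not bounded below by a constant: for $t>1$ and, say, $n\approx m^{t/(2k)}$, it tends to zero as $m\to\infty$. Even restoring the ``$+Cm$'' part of the hypothesis that you dropped (which also gives $\bar d_V\geq Cm/n$) does not rescue $t>1$, since the resulting bound $\mathbb{E}[|Y|]\geq C^t\big(nm^{-t/k}+n^{1-t}\big)$ still vanishes in that intermediate range of $n$. The remedy is exactly what the paper does: fix $t=1$, use \emph{both} terms of the density hypothesis to obtain $\mathbb{E}[|Y|]=e(G)/m\geq K(n/m^{1/k}+1)\geq K$ unconditionally, and push all the slack onto $K$. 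Choosing $K$ with $K^{k-1}\geq 2(4z)^{k-1}$ then dominates the $|Y|$-term, and $K^k$ large enough dominates the $e(\mathcal{H}_Y)$-term, since $\mathbb{E}[e(\mathcal{H}_Y)]\leq M_0 n^k/m$. Increasing $t$ only hurts here, because it depresses $\mathbb{E}[|Y|]$ below the threshold the hypergraph step needs. Once $t=1$ is fixed and the full hypothesis is used, the rest of your outline is correct and coincides with the paper's proof.
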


\begin{proof}
Assume $e(G)\geq K(m^{\frac{k-1}{k}}n+m)$, where $K=K(H,s)$ is specified later. Let $t=(4|A||B| s)^{2 |V(H)|}$ and $z=(4|A||B| s)^{4|B|+10}$ be constants depending only on $H$ and $s$. We show that if $G$ contains no $K_{s,s}$, then $G$ contains an induced copy of $H$ in which $A$ is embedded to $U$. 

Let $u$ be a vertex in $U$, and let $X_0=N_G(u)$. Define the $k$-uniform hypergraph $\mathcal{H}$ on vertex set $X_0$ such that a $k$-element set $D\subset X_0$ is an edge if $D$ has at most $t$ common neighbours in $G$. First, we show that there is a choice for $u$ such that $\cH$ contains an independent set of size at least $z$. By Proposition \ref{prop:hyp_ind}, there is independent set of size at least $|X_0|^{\frac{k}{k-1}}/(4(|X_0|+e(\cH))^{\frac{1}{k-1}})$. Hence, it is enough to show that there exists a choice of  $u\in U$ for which $|X_0|^{k}>(4z)^{k-1} (|X_0|+e(\cH))$.

Choose $u$ uniformly at random from $U$. Then $\bE[|X_0|]=\frac{e(G)}{m}\geq K \frac{n}{m^{1/k}}+K$. On the other hand, for any fixed $k$ element set $D\subset V$ with less than $t$ common neighbours, we have $\mathbb{P}(D\subset X_0)\leq \frac{t}{m}.$ Since there are at most $n^{k}$ such $k$ element sets in $V$, we conclude that $\bE[e(\cH)]\leq \frac{t n^{k}}{m}.$ By Jensen's inequality, we have $\bE[|X_0|^{k}]\geq \bE[|X_0|]^{k}\geq K^{k} \frac{n^{k}}{m}+K^k$, and therefore $$\bE[|X_0|^{k}-(4z)^{k-1}(e(\cH)+|X_0|)]>0$$ when $K$ is sufficiently large. Hence, there exists a choice of $u$ for which $\cH$ contains an independent set of size $z$. Fix such a choice, and let $X\subset X_0$ be such an independent set.

As every $k$ element subset of $X$ has at least $t$ common neighbors, we can apply Proposition \ref{prop:HMST2.4} to find an $(H,k,s)$-rich set $S\subset X$ of size $|B|$. But then, by Lemma~\ref{lemma:HMST2.3}, there is an embedding of $H$ in which $B$ is embedded to $S$, finishing the proof.
\end{proof}

\begin{proof}[Proof of Theorem \ref{thm:varieties upper}]
By Proposition \ref{prop:forbidden induced subgraph}, the incidence graph $G(\cP,\cV)$ does not contain an induced copy of the bipartite graph $H_{d,\Delta}$, where part $A$ is corresponds to points. Also, every vertex of $A$ has degree at most $d+1$, so applying Lemma \ref{lemma:induced_free} with $k=d+1$ and $H=H_{d,\Delta}$ gives the desired bound.
\end{proof}

\subsection{Varieties containment patterns and dual shatter functions}

In this section, we show a variant of Theorem~\ref{thm:varieties upper} using a different approach. Note that we use a slightly stronger assumption that the varieties have bounded description complexity, instead of just bounded degree. However, since the description complexity can be bounded by a function of the degree and the ambient dimension, this assumption does not make a big difference (see e.g. Theorem 2.1.16 in \cite{T}). 

We say that the variety $V$ has \textit{description complexity} at most $t$ if it can be defined using at most $t$ polynomials $f_1, \dots, f_t\in \bF[x_1, \dots, x_D]$ of degree at most $t$.

\begin{theorem}\label{thm:varieties upper variant}
Let $\cP$ be a set of $m$ points in $\bF^D$ and let $\cV$ be a set of $n$ varieties in $\bF^D$, each of dimension $d$ and description complexity at most $t$. If the incidence graph $G(\cP, \cV)$ is $K_{s, s}$-free, then \[I(\cP, \cV)\leq O_{D,t, s}(m^{\frac{d}{d+1}} n+m).\]
\end{theorem}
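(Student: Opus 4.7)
The plan is to prove Theorem~\ref{thm:varieties upper variant} by combining an extension of the R\'onyai--Babai--Ganapathy (RBG) zero-pattern theorem to varieties with the forbidden-induced-subgraph machinery of Section~\ref{sect:variety}. The extended RBG is obtained by running the classical RBG linear-algebra argument inside the coordinate ring $\oF[W]$ of a variety $W$, and controlling dimensions via the uniform Hilbert bound (Theorem~\ref{thm:uniform_hilbert}), yielding a bound polynomial in the number of polynomials of degree $\dim W$ rather than of the ambient dimension.

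The key auxiliary statement I will prove is: for a projective variety $W\subseteq\bP^D(\oF)$ of dimension $d$ and description complexity at most $t$, and polynomials $g_1,\ldots,g_k$ each of degree at most $t$, the number of distinct zero-patterns $\{i:g_i(p)=0\}$ realized at points $p\in W$ is at most $O_t(k^d)$. The proof mirrors the classical RBG argument. For each realized pattern $S$, choose a witness $p_S\in W$ and set $F_S:=\prod_{i\notin S}g_i$, a polynomial of degree at most $kt$. Ordering the realized patterns by any linear extension of reverse inclusion, one has $F_S(p_T)=0$ whenever $T\supsetneq S$ while $F_S(p_S)\neq 0$, so the evaluation matrix $(F_S(p_T))$ is triangular with nonzero diagonal. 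Hence the restrictions $F_S|_W$ are linearly independent in $\oF[W]:=\oF[x_0,\ldots,x_D]/I(W)$. After homogenizing and multiplying by suitable powers of a fixed coordinate not vanishing at any witness, all $F_S$ may be taken in a common graded piece $\oF_{kt}[x_0,\ldots,x_D]/I_{kt}$, whose dimension is bounded by Theorem~\ref{thm:uniform_hilbert} as $\deg(W)(kt)^d+d=O_t(k^d)$, using that $\deg(W)$ is itself bounded by a function of $t$.

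With the extended RBG in hand, the proof now proceeds as in Proposition~\ref{prop:forbidden induced subgraph} and Lemma~\ref{lemma:induced_free}, with Theorem~\ref{thm:preliminary} replaced throughout by the extended RBG. At each step of the inductive construction of a forbidden induced subgraph $H_{d,t}$ modeled on $H_{d,\Delta}$, the bound on the number of ``component types'' of successive intersections that drove the pigeonhole argument---previously obtained via Theorem~\ref{thm:preliminary}---is now obtained via the extended RBG applied to the intersection of the already-chosen varieties: it forces the number of configurations of the next layer of varieties on the intersection to be bounded by a constant $C=C(t,d)$, and the branching parameter of $H_{d,t}$ is chosen larger than $2^C$ to enforce pigeonhole. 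Every vertex on one side of $H_{d,t}$ has degree at most $d+1$, so Lemma~\ref{lemma:induced_free} applied with $k=d+1$ delivers $I(\cP,\cV)=O_{D,t,s}(m^{d/(d+1)}n+m)$.

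The main obstacle is establishing the extended RBG: the classical argument operates in the polynomial ring, but the adaptation must yield linear independence only modulo $I(W)$, forcing reliance on the Hilbert-function bound rather than on the full polynomial-dimension bound. A secondary subtlety is arranging the homogenization so as to place all $F_S$ into a single graded piece without spoiling linear independence, which is handled by a generic coordinate change ensuring a single coordinate is nonvanishing at the finitely many witness points. Once the extension is established, the iterative argument from Section~\ref{sect:variety} transfers with essentially no further complications, since it only ever used Theorem~\ref{thm:preliminary} as a black box for bounding the complexity of successive intersections.
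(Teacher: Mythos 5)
Your auxiliary zero-pattern statement is the right key lemma, and it matches the heart of the paper's Lemma~\ref{lemma:variety_zero_pattern}: the paper likewise fixes a single variety $V_a$, restricts the defining polynomials of the other varieties to $V_a$, runs the RBG linear-independence argument (products over nonvanishing restricted polynomials, plug in the witness with minimal support), and bounds the dimension of the relevant space of functions on $V_a$ via the uniform Hilbert bound (Theorem~\ref{thm:uniform_hilbert}); summing over $a$ gives the $O_{D,t}(k^{d+1})$ bound on containment patterns, i.e., a polynomial bound on the shatter function $\pi_{\cF_1}$.

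The gap is in how you then apply this. You propose to feed the extended RBG into the forbidden-induced-subgraph machinery of Proposition~\ref{prop:forbidden induced subgraph} and Lemma~\ref{lemma:induced_free}, replacing Theorem~\ref{thm:preliminary} ``throughout'' by the extended RBG. This substitution does not make sense: Theorem~\ref{thm:preliminary} is used in that induction to control the \emph{dimension} and the \emph{number of irreducible components} of successive intersections $V_1\cap\cdots\cap V_\ell$, which is what drives the pigeonhole step and forces the final intersection to be zero-dimensional with a bounded number of points. The extended RBG bounds a different invariant entirely (the number of zero-patterns/containment-patterns), and it gives a bound that grows like $k^d$, not a constant $C(t,d)$ independent of $k$; so ``the branching parameter chosen larger than $2^C$'' is circular. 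Moreover, if one only has a polynomial shatter function, the induced bipartite graph it forbids has vertices of degree up to $k$ (not $d+1$) on one side, so Lemma~\ref{lemma:induced_free} would yield the much weaker exponent $1-1/k$. The paper instead converts the shatter-function bound $\pi_{\cF_1}(k)=O_{D,t}(k^{d+1})$ directly into the incidence bound via Theorem~\ref{thm:zarankiewicz bounded VC} (FPSSZ's Zarankiewicz bound built on Haussler's packing lemma), applied with $r=d+1$; this is the step your proposal is missing. Alternatively, if you insist on the forbidden-subgraph route, note that bounded description complexity already implies bounded degree (iterated Theorem~\ref{thm:preliminary}), so Theorem~\ref{thm:varieties upper} applies directly and the extended RBG is then superfluous — but as written, your hybrid argument has a genuine hole.
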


Let us prepare the proof of this theorem. First, we recall some basic notions from the theory of VC-dimesion. Let $X$ be a ground set and let $\cF$ be a family of subsets of $X$. The \textit{shatter function} of the system $\cF$, denoted by $\pi_\cF(k)$, is defined as the maximum number of distinct intersections of a $k$-element set with members of $\cF$. Formally,
$$\pi_\cF(k)=\max_{A\subset X, |A|=k} |\{A\cap B:B\in \cF\}|.$$
For a sequence of varieties $V_1,\dots,V_k$ and a point $x$ in $\bF^D$, we define the \textit{containment pattern} of $V_1,\dots,V_k$ at $x$ as the set of indices $I\subseteq [k]$ of those varieties $V_i$ which contain $x$, that is $I=\{i\in [k]:x\in V_i\}$. Our first key lemma is an extension of a celebrated result of R\'onyai, Babai and Ganapathy \cite{RBG}  on the number of zero-patterns of polynomials. This statement, which might be of independent interest, implies that the family of possible containment-patterns has a polynomial shatter function.

\begin{lemma}\label{lemma:variety_zero_pattern}
Let $V_1,\dots,V_k$ be a sequence of varieties in $\bF^D$, each of dimension $d$ and description complexity at most $t$. The number of distinct containment patterns of $V_1,\dots,V_k$ is at most $O_{D, t}(k^{d+1})$.
\end{lemma}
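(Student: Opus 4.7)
My plan is to extend the R\'onyai--Babai--Ganapathy polynomial method for counting zero-patterns. The naive RBG bound would give only $O_{D,t}(k^D)$; to improve the exponent from $D$ to $d+1$, I will restrict the test polynomials to the $d$-dimensional algebraic set $W = V_1 \cup \dots \cup V_k$ and invoke Theorem~\ref{thm:uniform_hilbert} on the projective closure of $W$.

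Concretely, write each $V_i = V(f_{i,1}, \dots, f_{i,t})$ with $\deg f_{i,j} \le t$. For each realized non-empty containment pattern $I \subseteq [k]$, fix a point $x_I$ realizing it; for each $i \notin I$, choose $j_I(i) \in [t]$ with $f_{i, j_I(i)}(x_I) \ne 0$, and set
\[
    P_I \;=\; \prod_{i \notin I} f_{i, j_I(i)},
\]
a polynomial of degree at most $kt$. By construction $P_I(x_I) \ne 0$, while for any other realized pattern $J$ with $J \not\subseteq I$ one has $P_I(x_J) = 0$: indeed, any $i \in J \setminus I$ satisfies $x_J \in V_i$, which forces $f_{i, j_I(i)}(x_J) = 0$. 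Ordering patterns by a linear extension of $\subseteq$ makes the evaluation matrix $\bigl[P_I(x_J)\bigr]$ lower triangular with nonzero diagonal, so the $P_I$'s are linearly independent.

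The key gain now comes from the fact that $x_I \in W$ whenever $I \ne \emptyset$, so the $P_I$'s remain linearly independent as functions on $W$. Homogenizing each $P_I$ to a degree-$kt$ form in $\oF[y_0, \dots, y_D]$ (multiplying by a suitable power of $y_0$) and passing to the projective closure $\tilde{W} = \tilde{V}_1 \cup \dots \cup \tilde{V}_k$ (which equals the union of closures since each $V_i$ is irreducible), the number of non-empty realized patterns is at most $\dim\bigl(\oF_{kt}[y_0, \dots, y_D]/I(\tilde W)_{kt}\bigr)$. The identity $I(\tilde W) = \bigcap_i I(\tilde V_i)$ yields an injective graded ring map $\oF[y]/I(\tilde W) \hookrightarrow \prod_i \oF[y]/I(\tilde V_i)$. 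Each $\tilde V_i$ is irreducible of dimension $d$ and, by a Bezout-type estimate, of degree at most some $\Delta_0 = \Delta_0(t, D)$. Summing the bounds from Theorem~\ref{thm:uniform_hilbert} applied to each $\tilde V_i$ then gives
\[
    \dim\bigl(\oF_{kt}[y]/I(\tilde W)_{kt}\bigr) \;\le\; \sum_{i=1}^{k} \bigl(\deg(\tilde V_i)(kt)^d + d\bigr) \;=\; O_{D,t}(k^{d+1}),
\]
and the empty pattern contributes at most one more.

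The main subtlety is that each $V_i$ is cut out by several polynomials rather than a single one, so the standard RBG trick (one zero/nonzero polynomial per test) does not apply verbatim. I will bypass this by choosing, for each $I$ and each $i \notin I$, a single witness polynomial among the $f_{i,\cdot}$: the fact that membership $x_J \in V_i$ forces \emph{every} $f_{i,j}(x_J)$ to vanish --- in particular the chosen witness --- is exactly what makes the triangularity step go through.
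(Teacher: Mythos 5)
Your argument is correct, and it takes a recognizably different route to the same bound. The paper fixes one index $a \in [k]$ at a time, restricts the defining polynomials of the other varieties modulo $I(V_a)$, runs the R\'onyai--Babai--Ganapathy linear-independence argument on the single variety $V_a$, and then sums over $a$; the factor of $k$ comes from this outer sum, and the factor $k^d$ comes from Theorem~\ref{thm:uniform_hilbert} applied to one fixed $d$-dimensional variety at a time. You instead run the RBG argument once, directly on the union $W = V_1 \cup \dots \cup V_k$, and defer all the ``sum over $i$'' bookkeeping to the very last step, via the graded injection $\oF[y]/I(\tilde W) \hookrightarrow \prod_i \oF[y]/I(\tilde V_i)$ and a termwise application of Theorem~\ref{thm:uniform_hilbert}. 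Both decompositions produce the same $k \cdot O(k^d)$ count. Two minor points of difference favor your version slightly: you choose a single witness polynomial $f_{i,j_I(i)}$ per variety $i\notin I$, so your $P_I$ has degree at most $kt$ rather than the paper's $kt^2$ (harmless asymptotically but cleaner), and your linear independence is exhibited by an explicit triangular evaluation matrix rather than the paper's minimal-support extraction (again equivalent). One small stylistic note: the parenthetical ``since each $V_i$ is irreducible'' is not needed to conclude that the Zariski closure of a finite union is the union of the closures --- that holds unconditionally --- though irreducibility is indeed needed for each $\tilde V_i$ to be a variety to which Theorem~\ref{thm:uniform_hilbert} applies. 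Also, like the paper, you rely tacitly on the fact that bounded description complexity implies $\deg(\tilde V_i) = O_{D,t}(1)$; a one-line justification (iterated B\'ezout via Theorem~\ref{thm:preliminary}, taking the component $V_i$ inside $V(f_{i,1})\cap\dots\cap V(f_{i,t})$) would make the write-up self-contained.
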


The proof of Lemma~\ref{lemma:variety_zero_pattern} combines the ideas from \cite{RBG} with Hilbert's theorem. Hence, before presenting the proof, let us recall the result of R\'onyai, Babai and Ganapathy \cite{RBG}. Using our terminology, given sequence of polynomials $f_1, \dots, f_k\in \bF[x_1, \dots, x_D]$ and point $x\in \bF^D$, the \textit{zero-pattern} of $f_1,\dots,f_k$ at $x$ is the set of indices $I\subset [k]$ defined as $I=\{i\in [k]:f_i(x)=0\}$. If $\mathcal{Z}(f_1, \dots, f_k)$ is the set of all zero-patterns of $f_1, \dots, f_k$ and $f_1, \dots, f_k$ are polynomials of degree at most $\Delta$, then the number of distinct zero-patterns is at most 
\begin{equation}\label{eqn:zero-patterns polynomials}
    |\mathcal{Z}(f_1,\dots, f_k)|\leq  \binom{k\Delta}{D}.
\end{equation}
Note that (\ref{eqn:zero-patterns polynomials}) can be used as a black box to derive a weaker version of Lemma~\ref{lemma:variety_zero_pattern}, since every containment patterns of $V_1, \dots, V_k$ corresponds to a zero-patterns of their defining polynomials. However, this argument would lead to the weaker upper bound $O_t(k^D)$ in Lemma \ref{lemma:variety_zero_pattern}.

\begin{proof}[Proof of Lemma~\ref{lemma:variety_zero_pattern}.]
We work in the algebraic closure $\oF$ of the field $\bF$, which can only increase the number of containment patterns. We show that for every $a\in [k]$, the number of containment patterns $I\subset [k]$ with $a\in I$, i.e. the patterns coming from points $x\in V_a$, is at most $O_{t,D}(k^d)$. Then we are clearly done. Without loss of generality, let $a=1$, and let $M$ be the number of containment patterns $I\subseteq [k]$ with $1\in I$.

For $i=2,\dots,k$, let the defining polynomials of the variety $V_i$ be $f_{i,1}, \dots, f_{i,t}\in \oF[x_1, \dots, x_D]$, where we may repeat some of the polynomials to ensure that we have exactly $t$ polynomials. Since every containment pattern corresponds to a unique zero-pattern of polynomials $f_{2,1}, \dots, f_{k,t}$, we have $M\leq N$, where $N$ is the number of zero-patterns of $f_{2,1}, \dots, f_{k,t}$ on the variety $V_1$. Let $\widetilde{f}_{i,j}$ be the polynomial function induced on $V_1$ by the polynomial $f_{i, j}$, i.e. $\widetilde{f}_{i,j}=f_{i,j}\mod I(V_1)$, where we recall that $I(V_1)$ is the ideal of the variety $V_1$ (see the Preliminaries).

Let $x_1, \dots, x_N\in V_1$ be points witnessing the $N$ distinct zero-patterns, and for $j\in [N]$, let $S_j\subset \{\widetilde{f}_{2,1},\dots,\widetilde{f}_{k,t}\}$ be the set of polynomial $\widetilde{f}$ for which $\widetilde{f}(x_j)\neq 0$.  Define the polynomial $$g_j(x)=\prod_{\widetilde{f}\in S_j} \widetilde{f}(x).$$ 
Clearly, we have $g_{j}(x_j)\neq 0$, and $$\deg g_j\leq \sum_{i,\ell} \deg \widetilde{f}_{i,\ell} \leq kt^2.$$ Furthermore, note that $g_{j}(x_{\ell})=0$ if $S_{j}\not\subset S_{\ell}$, since for every $\widetilde{f}\in S_{j}\setminus S_{\ell}$, we have $\widetilde{f}(x_{\ell})=0$.

We now argue that the polynomials $g_1,\dots,g_N$ are linearly independent. Otherwise, there exist scalars $\lambda_1, \dots, \lambda_N\in \oF$, not all zero, for which  \[\sum_{j=1}^n \lambda_j g_j=0.\] Choose $\ell$ such that $\lambda_{\ell}\neq 0$ and $|S_{\ell}|$ is minimal. If we plug  $x=x_{\ell}$ into the above equation, all terms except $g_{\ell}(x_{\ell})$ vanish. Indeed, for all $j\neq \ell$ we either have $\lambda_j=0$ or $S_j\not\subset S_{\ell}$, meaning that $g_j(x_{\ell})=0$. But $g_{\ell}(x_{\ell})\neq 0$, which is a contradiction. Thus, the polynomials $g_1, \dots, g_N$ are linearly independent. But the dimension of the space of polynomials of degree at most $kt^2$ on the variety $V_1$ is bounded by $O_{t, D}(k^{d})$ by Theorem~\ref{thm:uniform_hilbert}, and so $M\leq N\leq O_{t, D}(k^d)$. 
\end{proof}

In the proof of Theorem \ref{thm:varieties upper variant}, we also use the following result of \cite{FPSSZ}, whose proof follows from the Haussler packing lemma \cite{Haussler}. 

\begin{theorem}[Theorem 2.1 from \cite{FPSSZ}]\label{thm:zarankiewicz bounded VC}
Let $c>0$ and $r,s\in \mathbb{N}$, then there exists $c_1=c_1(c,D,s)>0$ such that the following holds. Let $G=(A, B, E)$ be a bipartite graph with $|A|=m$ and $|B|=n$ such that the set system $\cF_1=\{N(q):q\in A\}$  satisfies $\pi_{\cF_1}(k)\leq ck^r$ for every positive integer $k$. Then, if $G$ is $K_{s, s}$-free, we have 
\[|E(G)|\leq c_1(nm^{1-1/r} +m).\]
\end{theorem}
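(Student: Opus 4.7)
The plan is to prove this bound via the Haussler-packing argument of \cite{FPSSZ, Haussler}, which combines a dyadic decomposition of the degrees in $A$ with a random sampling step designed to bring the $K_{s,s}$-free hypothesis into contact with the polynomial shatter-function hypothesis. I would organize the argument in three steps.

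First, I would dyadically partition $A$ by degree: for each $j\geq 0$ let $A_j=\{q\in A:2^j\le |N(q)|<2^{j+1}\}$ and write $d_j=2^j$, so that $|E(G)|\leq \sum_j 2 d_j |A_j|$. The core claim then is the uniform estimate
\[
|A_j|\;\leq\;C_{c,r,s}\bigl(n/d_j\bigr)^{r}+C_{s}
\]
for every dyadic scale $j$, to which the remainder of the proof is devoted.

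Second, I would establish this bound by random sampling. Fix $d=d_j$, write $A^*=A_j$, and sample $R\subseteq B$ by including each $b\in B$ independently with probability $p=\alpha s/d$ for a sufficiently large constant $\alpha=\alpha(s)$. Then $\mathbb{E}|R|=\alpha s n/d$ and each $|N(q)\cap R|$ is binomial with mean in $[\alpha s,2\alpha s)$. A Chernoff estimate plus averaging lets me fix an outcome of $R$ with $|R|\leq 2\alpha s n/d$ such that the subset $A^{**}:=\{q\in A^*:|N(q)\cap R|\geq s\}$ satisfies $|A^{**}|\geq |A^*|/2$. The shatter-function hypothesis then bounds the number of distinct traces $\{N(q)\cap R:q\in A^{**}\}$ by $\pi_{\cF_1}(|R|)\leq c|R|^{r}\leq C(sn/d)^{r}$. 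On the other hand, the $K_{s,s}$-free assumption forbids $s$ vertices of $A^{**}$ from sharing the same trace, since such a common trace would contain $\geq s$ elements of $B$, yielding a $K_{s,s}$. Hence each trace is attained by at most $s-1$ vertices of $A^{**}$, giving
\[
|A^*|\;\leq\;2|A^{**}|\;\leq\;2(s-1)\,c\,|R|^{r}\;\leq\;C_{c,r,s}(n/d)^{r},
\]
valid whenever $n/d$ exceeds an $s$-dependent threshold; otherwise $|A_j|\leq O_s(1)$ trivially, accounting for the additive $C_s$.

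Finally, I would assemble the dyadic pieces by splitting at the critical scale $d^{*}\approx n/m^{1/r}$. For $d_j\leq d^{*}$, apply the trivial bound $|A_j|\leq m$, which yields $\sum_{d_j\leq d^*}d_j m=O(m d^*)=O(nm^{1-1/r})$ by geometric summation of dyadic values; for $d_j>d^{*}$, apply the packing bound, which yields $\sum_{d_j>d^*}d_j(n/d_j)^{r}=O(n^{r}/(d^{*})^{r-1})=O(nm^{1-1/r})$. The additive $O_s(1)$ error across the $O(\log n)$ dyadic classes contributes a total of $O_s(m)$ to the edge count and is absorbed in the $+m$ term. The main obstacle is the delicate balancing in Step 2: the sampling probability $p$ must be large enough that the Chernoff bound guarantees $|N(q)\cap R|\geq s$ for a constant fraction of $A^*$ (so the $K_{s,s}$-free hypothesis can force the repetition bound on traces), while simultaneously being small enough that $|R|\leq O(sn/d)$ keeps the shatter-function bound tight. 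This coupling between the polynomial shatter function and the extremal hypothesis is exactly where the argument derives its strength.
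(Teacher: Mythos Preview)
The paper does not give its own proof of this statement: Theorem~\ref{thm:zarankiewicz bounded VC} is quoted from \cite{FPSSZ} as a known input, with only the remark that its proof follows from the Haussler packing lemma~\cite{Haussler}. So there is no in-paper argument to compare against.

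That said, your random-sampling sketch is a correct and self-contained route to the bound, and it is essentially the argument underlying~\cite{FPSSZ}: the dyadic decomposition of $A$ by degree, the sample $R\subseteq B$ at rate $\Theta(s/d)$, the shatter bound $\pi_{\cF_1}(|R|)\le c|R|^r$ on the number of traces $N(q)\cap R$, and the observation that a trace of size $\ge s$ cannot be shared by $s$ vertices of $A$ without producing a $K_{s,s}$, all combine exactly as you describe. Two minor clean-ups: (i) the correct edge case is $d_j\ge \alpha s$ (so that the sampling probability $p=\alpha s/d_j$ is at most $1$), not a threshold on $n/d$; the finitely many dyadic classes with $d_j<\alpha s$ contribute at most $O_s(m)$ edges trivially since each such vertex has degree $<2\alpha s$. (ii) For $r=1$ your large-degree dyadic sum $\sum_{d_j>d^*} d_j^{\,1-r}$ degenerates to a sum of $O(\log n)$ ones and picks up a logarithmic loss; this is immaterial here since every application in the paper has $r=d+1\ge 2$, but it is worth being aware of.
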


\begin{proof}[Proof of Theorem \ref{thm:varieties upper variant}]
Let $\cV=\{V_1,\dots,V_n\}$, and let $\cF$ be the set of containment patterns of $V_1,\dots,V_n$. Then $\pi_{\cF}(k)=O_{D,t}(k^{d+1})$ by Lemma \ref{lemma:variety_zero_pattern}. For every $x\in \cP$, the neighborhood $N(x)$ corresponds to the containment pattern $I_x=\{i\in [n]:x\in V_i\}$. Therefore, if $\cF_1=\{N(x):x\in \cP\}$, then $\pi_{\cF_1}(k)\leq \pi_{\cF}(k)=O_{D,t}(k^{d+1})$. Applying Theorem \ref{thm:zarankiewicz bounded VC} with $r=d+1$, we get $$I(\cP,\cV)=O_{D,t,s}(n m^{\frac{d}{d+1}}+m).$$
\end{proof}

\section{Algebraic Zarankiewicz's problem}\label{sect:algebraic}

In this section, we present the proof of our bound on Zarankiewicz's problem in algebraic graphs of bounded complexity. We will prove the upper and lower bounds separately, and we will then use the lower bound to show Theorem~\ref{thm:construction varieties}. In the proof of the upper bound, we combine Theorem~\ref{thm:zarankiewicz bounded VC} with the estimate~(\ref{eqn:zero-patterns polynomials}). 

For the lower bounds, we use the random polynomial method, which was pioneered by Bukh \cite{Bukh}. The main idea is to choose the polynomial defining the algebraic graph to be a random low-degree polynomial, chosen uniformly among all polynomials of degree at most $\Delta$ in $\bF[x_1, \dots, x_D]$, where $\bF$ is a finite field of appropriate size. The key observation is that for any fixed $x_1, \dots, x_s\in \bF^D$ and random polynomial $f$ of degree at most $\Delta$, the random variables $\{f(x_j)| j\in [s]\}$ are independent and uniform in $\bF$ as long as $s\leq \min\{\Delta, |\bF|^{1/2}\}$ (see Claims 3.3 and 3.4 in \cite{ST23}).

\begin{theorem}\label{thm:algebraic zarankiewicz upper}
Let $G$ be an algebraic graph of description complexity at most $t$ on the vertex set $\cP\cup \cQ$, where $\cP\subseteq \bF^{D_1}$, $\cQ\subseteq \bF^{D_2}$ and $|\cP|=m$, $|\cQ|=n$. If $G$ is $K_{s, s}$-free, the number of edges in $G$ is at most $$O_{D_1,D_2,t, s}(\min\{m^{1-1/D_1}n, mn^{1-1/D_2}\}).$$
\end{theorem}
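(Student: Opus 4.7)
The plan is to invoke Theorem~\ref{thm:zarankiewicz bounded VC} after bounding the shatter function of the neighbourhood set system $\cF_1=\{N_G(x):x\in \cP\}\subseteq 2^{\cQ}$ by a polynomial of degree $D_1$; the symmetric argument on $\cF_2=\{N_G(y):y\in \cQ\}\subseteq 2^{\cP}$ will deliver the other half of the minimum. The required shatter function bound will follow directly from the R\'onyai--Babai--Ganapathy estimate (\ref{eqn:zero-patterns polynomials}), with the crucial observation that freezing one coordinate turns the algebraic edge predicate into a polynomial condition purely in the remaining coordinate.

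To bound $\pi_{\cF_1}(k)$, I fix describing polynomials $f_1,\dots,f_t\colon \bF^{D_1}\times \bF^{D_2}\to \bF$ and a Boolean formula $\Phi$ witnessing that $G$ has description complexity at most $t$, and then fix an arbitrary $k$-element subset $\{y_1,\dots,y_k\}\subseteq \cQ$. I consider the collection of $tk$ polynomials $g_{i,j}(x):=f_i(x,y_j)\in \bF[x_1,\dots,x_{D_1}]$, $i\in [t]$, $j\in [k]$, each of degree at most $t$. For any $x\in \bF^{D_1}$ the trace $N_G(x)\cap\{y_1,\dots,y_k\}$ is determined by the values $\Phi([g_{1,j}(x)=0],\dots,[g_{t,j}(x)=0])$, $j\in [k]$, which in turn are determined by the joint zero-pattern of the family $(g_{i,j})_{i,j}$ at $x$. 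Consequently, the number of distinct traces is at most the number of zero-patterns of these $tk$ polynomials in $D_1$ variables, which by (\ref{eqn:zero-patterns polynomials}) is at most $\binom{t^2 k}{D_1}=O_{D_1,t}(k^{D_1})$. This gives $\pi_{\cF_1}(k)=O_{D_1,t}(k^{D_1})$.

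Applying Theorem~\ref{thm:zarankiewicz bounded VC} with $r=D_1$ now yields $|E(G)|=O_{D_1,D_2,t,s}(n m^{1-1/D_1}+m)$. The exact same argument, with the roles of $\cP$ and $\cQ$ swapped (viewing $G$ as defined by the polynomials $(x,y)\mapsto f_i(y,x)$), gives $|E(G)|=O_{D_1,D_2,t,s}(m n^{1-1/D_2}+n)$, and taking the minimum of the two completes the proof. The additive $+m$ is absorbed into $n m^{1-1/D_1}$ as soon as $n\geq m^{1/D_1}$, and similarly for $+n$; the remaining regime is easily covered by the trivial bound $|E(G)|\leq mn$, which is already smaller than the stated right-hand side there. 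No substantive obstacle is expected: the only point requiring care is that, although we introduce $tk$ polynomials $g_{i,j}$, each still has degree at most the constant $t$, so the shatter-function exponent that comes out of (\ref{eqn:zero-patterns polynomials}) is exactly $D_1$, and this is what makes both halves of the minimum emerge with the correct exponents.
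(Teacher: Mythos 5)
Your argument is essentially identical to the paper's proof of Theorem~\ref{thm:algebraic zarankiewicz upper}: both bound the shatter function of a neighbourhood set system via the R\'onyai--Babai--Ganapathy estimate~(\ref{eqn:zero-patterns polynomials}) applied to the $tk$ polynomials obtained by freezing one argument of the $f_i$'s, and then invoke Theorem~\ref{thm:zarankiewicz bounded VC}; you carry out both directions of the symmetry explicitly, while the paper does one and appeals to symmetry, which is purely cosmetic.

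One small caveat concerning your final sentence. When $n<m^{1/D_1}$, the trivial bound $|E(G)|\leq mn$ is \emph{not} smaller than $m^{1-1/D_1}n$ (it is larger), and in that regime $m^{1-1/D_1}n<m$, so the additive $+m$ cannot actually be absorbed: for instance, taking $\cQ$ to be a single point joined to all of $\cP$ gives a $K_{2,2}$-free algebraic graph with $m$ edges, while $\min\{m^{1-1/D_1}n,\,mn^{1-1/D_2}\}=m^{1-1/D_1}$. This reflects a minor imprecision in the theorem statement itself rather than a flaw in your core argument; both your proof and the paper's actually establish the bound $O_{D_1,D_2,t,s}\big(\min\{m^{1-1/D_1}n+m,\ mn^{1-1/D_2}+n\}\big)$, and the lower-order terms should in principle be kept.
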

\begin{proof}
Let the graph $G$ be defined by the polynomials $f_1, \dots, f_t:\bF^{D_1}\times \bF^{D_2}\to \bF$ and a boolean formula $\Phi$, where $\deg f_i\leq t$ for $i=1, \dots, t$.

Since the roles of $m$ and $n$ are symmetric, it suffices to show that any $K_{s, s}$-free algebraic graph $G$ in $(\bF^{D_1}, \bF^{D_2})$ has at most $O_{t, s}(mn^{1-1/D_2}+n)$ edges. To do this, we use Theorem~\ref{thm:zarankiewicz bounded VC}. Define the set system $\cF=\{N(v):v\in \cQ\}$ on the ground set $\cP$. Our main goal is to show $\pi_{\cF}(k)\leq O_{t, D_2}(k^{D_2})$ for all $k$, since in this case Theorem~\ref{thm:zarankiewicz bounded VC} shows that $e(G)\leq O_{t, s}(mn^{1-1/D_2}+n)$. To this end, we fix  $k$ points $x_1, \dots, x_k\in \cP$ and bound the number of possible intersections $\{x_1, \dots, x_k\}\cap N(v)$ for $v\in \cQ$.  For every $i\in [k]$ and $j\in [t]$, consider the polynomial $y\mapsto f_j(x_i, y)$ over $\bF^{D_2}$ Let $F$ be the set of these $kt$ polynomials.

The number of distinct intersections $\{x_1, \dots, x_k\}\cap N(v)$ is bounded by the number of zero-patterns of polynomials in $F$, since the zero-pattern of polynomials in $F$ at a point $v\in \bF$ can be used to determine which $x_i$ is adjacent to $v$. Since $F$ is a set of $kt$ polynomials of degree at most $t$, by (\ref{eqn:zero-patterns polynomials}) we have that the number of zero-patterns of the polynomials in $F$ is bounded by $\binom{kt^2}{D_2}$. Therefore, $\pi_{\cF}(k)\leq \binom{kt^2}{D_2}= O_{t, D_2}(k^{D_2})$, finishing the proof.
\end{proof}

To show the lower bounds, we will need the following simple lemma.

\begin{lemma}\label{lemma:zeros of random polynomials}
Let $\Delta, D\geq 3$ be fixed integers and let $p\geq 4$ be a prime. Let $f\in \bF_p[x_1, \dots, x_D]$ be chosen uniformly at random among polynomials of degree at most $\Delta$, then $f$ has at least $p^{D-1}/2$ zeros in $\bF_p^D$ with probability at least $\frac{3}{4}$.
\end{lemma}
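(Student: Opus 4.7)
The plan is to apply a standard second-moment (Chebyshev) argument to $Z = Z(f) := |\{x \in \bF_p^D : f(x) = 0\}|$, using the pairwise-independence property of random low-degree polynomials already invoked in the paper.

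First, I would compute the mean. By Claims 3.3 and 3.4 of \cite{ST23} recalled just before the lemma, for any fixed $x \in \bF_p^D$ the value $f(x)$ is uniformly distributed in $\bF_p$ (this needs only $1 \leq \min(\Delta, p^{1/2})$, which certainly holds). Writing $Z = \sum_{x \in \bF_p^D} \mathbf{1}[f(x) = 0]$ and summing, I obtain $\bE[Z] = p^{D-1}$.

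Next, to control $\Var(Z)$ I would upgrade to pairwise independence: the same result from \cite{ST23} implies that for any two distinct $x, y \in \bF_p^D$, the pair $(f(x), f(y))$ is uniform on $\bF_p^2$, provided $2 \leq \min(\Delta, p^{1/2})$. This is exactly where the hypotheses $\Delta \geq 3$ and $p \geq 4$ are used. Consequently the indicators $\mathbf{1}[f(x)=0]$ are pairwise uncorrelated, and
\[
\Var(Z) \;=\; \sum_{x \in \bF_p^D} \Var\bigl(\mathbf{1}[f(x)=0]\bigr) \;=\; p^D \cdot \tfrac{1}{p}\bigl(1 - \tfrac{1}{p}\bigr) \;\leq\; p^{D-1}.
\]

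Finally, I would apply Chebyshev's inequality with deviation $p^{D-1}/2$:
\[
\Pb\!\left[Z < \tfrac{p^{D-1}}{2}\right] \;\leq\; \Pb\!\left[|Z - \bE[Z]| \geq \tfrac{p^{D-1}}{2}\right] \;\leq\; \frac{\Var(Z)}{(p^{D-1}/2)^2} \;\leq\; \frac{4}{p^{D-1}}.
\]
Since $D \geq 3$ and $p \geq 4$, we have $p^{D-1} \geq 16$, so the right-hand side is at most $1/4$, giving $\Pb[Z \geq p^{D-1}/2] \geq 3/4$ as required. There is no real obstacle here: the only substantive input is the pairwise independence from \cite{ST23}, and the hypotheses $\Delta, D \geq 3$ and $p \geq 4$ are tailored precisely to make both that input and the concluding Chebyshev bound go through.
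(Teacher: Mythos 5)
Your proof is essentially identical to the paper's: both compute $\bE[Z]=p^{D-1}$ from uniformity of $f(x)$, invoke the pairwise-independence result of \cite{ST23} to bound $\Var(Z)\leq p^{D-1}$, and conclude via Chebyshev that $\Pb[Z<p^{D-1}/2]\leq 4/p^{D-1}\leq 1/4$. No meaningful differences.
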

\begin{proof}
Since $f(x)$ is a uniformly distributed in $\bF_p$ for any fixed $x\in \bF_p^D$, the expected number of zeros of $f$ is $\bE\big[|V(f)|\big]=p^{D-1}$. On the other hand, using the second moment method, one can also show that the random variable $|V(f)|$ is concentrated around its mean. More precisely, if we denote by $\mathbf{1}_x$ the indicator random variable of $f(x)=0$, we have 
\[\Var\big(|V(f)|\big)= \sum_{x, y\in \bF^D} \Cov\big(\mathbf{1}_x, \mathbf{1}_y\big).\]
Claim 3.4 of \cite{ST23} states that $f(x)$ and $f(y)$ are independent random variables when $x\neq y$, if $p\geq 4$ and $\Delta\geq 2$. Moreover, for $x=y$, we have $\Cov\big(\mathbf{1}_x, \mathbf{1}_x\big)\leq \Pb[f(x)=0]=p^{-1}$. Hence, $\Var\big( |V(f)|\big)\leq p^{D-1}$. By Chebyshev's inequality, we have $\Pb[ |V(f)|<p^{D-1}-\lambda \sqrt{p^{D-1}}]\leq \frac{1}{\lambda^2}$, which for $\lambda=\frac{1}{2}\sqrt{p^{D-1}}$ gives 
\[\Pb\left[|V(f)|<\frac{p^{D-1}}{2}\right]\leq \frac{4}{p^{D-1}}\leq \frac{1}{4}.\]
\end{proof}

\begin{proposition}\label{prop:algebraic zarankiewicz lower}
For any $D_1, D_2$ and sufficiently large integers $m, n$, there exists a field $\bF$, sets of points $\cP\subseteq \bF^{D_1}, \cQ\subseteq \bF^{D_2}$, $|\cP|=m, |\cQ|=n$ and a polynomial $f:\bF^{D_1}\times \bF^{D_2}\to \bF$ of degree at most $\Delta=(D_1+D_2)^2$ with the following property. The algebraic graph $G$ defined on $\cP\cup \cQ$ by the equation $f(x, y)=0$ is $K_{s, s}$-free, where $s=D_1+D_2$, and has $\Omega(\min\{m^{1-1/D_1}n, mn^{1-1/D_2}\})$ edges.
\end{proposition}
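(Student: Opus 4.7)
The plan is to apply the random polynomial method of Bukh, in the form set up by the preceding lemmas. By the symmetry between $\cP$ and $\cQ$ it suffices to treat the case $m^{1-1/D_1}n \leq mn^{1-1/D_2}$, equivalently $n^{1/D_2}\leq m^{1/D_1}$, in which the target reduces to constructing an algebraic graph with $\Omega(m^{1-1/D_1}n)$ edges. First I would use Bertrand's postulate to select a prime $p$ with $m^{1/D_1}\leq p\leq 4m^{1/D_1}$; then $p^{D_1}\geq m$ and $p^{D_2}\geq n$ (the latter by the reduction above), so arbitrary point sets $\cP\subseteq\bF_p^{D_1}$ and $\cQ\subseteq\bF_p^{D_2}$ of the required sizes exist. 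I would then let $f$ be chosen uniformly at random from polynomials in $\bF_p[x_1,\dots,x_{D_1},y_1,\dots,y_{D_2}]$ of degree at most $\Delta=(D_1+D_2)^2$, and define $G$ by $f(x,y)=0$.

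For the edge count I would mimic the second-moment argument from the proof of Lemma~\ref{lemma:zeros of random polynomials}: by Claim~3.3 of \cite{ST23}, $\bE[|E(G)|]=mn/p=\Theta(m^{1-1/D_1}n)$; by Claim~3.4 of \cite{ST23}, the values of $f$ at distinct pairs are pairwise independent, giving $\Var(|E(G)|)\leq mn/p$, so Chebyshev's inequality yields $|E(G)|\geq \tfrac{1}{2}\bE[|E(G)|]$ with probability at least $3/4$ once $m,n$ are large enough. The core of the proof is a bound on $\bE[\#K_{s,s}]$ with $s=D_1+D_2$. A fixed pair of $s$-subsets $X\subseteq\cP$, $Y\subseteq\cQ$ spans a $K_{s,s}$ precisely when all $s^2$ values of $f$ on $X\times Y$ vanish; since $s^2\leq \Delta$ and $s\leq p^{1/2}$ for $m$ sufficiently large, a further appeal to Claim~3.4 of \cite{ST23} shows these $s^2$ values are mutually independent, so this happens with probability $p^{-s^2}$. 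Using $p\geq m^{1/D_1}$ and $n\leq m^{D_2/D_1}$,
\[
\bE[\#K_{s,s}] \leq \binom{m}{s}\binom{n}{s}p^{-s^2} \leq \frac{1}{s!^2}\, m^{s(D_1+D_2-s)/D_1} = \frac{1}{s!^2},
\]
where the $m$-exponent vanishes precisely because $s=D_1+D_2$. Markov's inequality then gives $\Pb[\#K_{s,s}\geq 1]\leq 1/s!^2\leq 1/4$, and a union bound with the edge-count event produces a realisation of $f$ that is simultaneously $K_{s,s}$-free and has $\Omega(m^{1-1/D_1}n)$ edges.

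The main subtlety, which dictates the specific values of $s$ and $\Delta$ in the statement, is the tight exponent balance in the computation above: one needs $s$ to be as small as possible (so that $K_{s,s}$-freeness is as strong as possible) subject to the exponent of $m$ in $\bE[\#K_{s,s}]$ being non-positive, and $s=D_1+D_2$ makes this exponent exactly zero. Simultaneously, $\Delta$ must be at least $s^2$ in order for the $s^2$-wise independence guaranteed by \cite{ST23} to be available when counting $K_{s,s}$'s, which is why $\Delta=(D_1+D_2)^2$ is the natural choice and why no deletion step is needed in the argument.
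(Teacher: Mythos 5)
Your proposal is correct and follows essentially the same random-polynomial approach as the paper: choose a prime $p$ of the right size via Bertrand, pick $f$ uniformly among degree-$\Delta$ polynomials, bound the edge count from below by a second-moment argument, and show $K_{s,s}$-freeness via a union/expectation bound using the $s^2$-wise independence of evaluations from \cite{ST23}. The only differences are organizational: you reduce to the symmetric case $m^{1-1/D_1}n\leq mn^{1-1/D_2}$ rather than the other one, you run the second-moment estimate directly on the $m\times n$ subsets $\cP\times\cQ$ rather than invoking the paper's Lemma~\ref{lemma:zeros of random polynomials} on all of $\bF_p^{D_1}\times\bF_p^{D_2}$ and then subsampling, and you should technically require $s^2\leq p^{1/2}$ (not $s\leq p^{1/2}$) for the independence claim—immaterial here since $p\to\infty$ while $s$ is a constant.
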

Observe that the graph $G$ has description complexity at most $(D_1+D_2)^2$ and therefore Proposition~\ref{prop:algebraic zarankiewicz lower} suffices to show the second part of Theorem~\ref{thm:algebraic zarankiewicz}.

\begin{proof}
By symmetry, we may assume that $mn^{1-1/D_2}<m^{1-1/D_1}n$, which is equivalent to $m\leq n^{D_1/D_2}$. Let $p$ be the smallest prime larger than $n^{1/D_2}$, which satisfies $p<2n^{1/D_2}$ by Bertrand's postulate.

We define an algebraic graph $G_0$ in $(\bF_p^{D_1}, \bF_p^{D_2})$  as follows. Let $\cP_0=\bF_p^{D_1}$ and $\cQ_0=\bF_p^{D_2}$. Furthermore, let $f:\bF_p^{D_1}\times \bF_p^{D_2}\to \bF_p$ be a polynomial chosen randomly from the uniform distribution on all polynomials  of degree at most $\Delta=(D_1+D_2)^2$. Finally, set $x\in \cP_0$ and $y\in \cQ_0$ to be adjacent in $G_0$ if and only if $f(x, y)=0$.

We begin by showing that the probability $G_0$ contains $K_{s, s}$ is at most $\frac{1}{4}$. Given $2s$ points $x^{(1)}, \dots, x^{(s)}\in \cP_0, y^{(1)}, \dots, y^{(s)}\in \cQ_0$, the $s^2$ points $(x^{(i)}, y^{(j)})$ are independent uniform random variables as long as $s^2\leq \min\{\Delta, p^{1/2}\}$, see \cite{Bukh}. Therefore, the probability  that $f(x^{(i)}, y^{(j)})=0$ for all $i, j\in [s]$ is exactly $p^{-s^2}$.  Hence, we can apply the union bound over all subsets of $\cP_0$ and $\cQ_0$ of size $s$ to deduce
\[\Pb[K_{s, s}\subseteq G]\leq \binom{p^{D_1}}{s}\binom{p^{D_2}}{s} p^{-s^2}\leq \frac{p^{D_1s}}{s!}\frac{p^{D_2s}}{s!}p^{-s^2}\leq \frac{1}{(s!)^2}\leq \frac{1}{4}.\]

On the other hand, Lemma~\ref{lemma:zeros of random polynomials} shows that with probability at least $3/4$, the polynomial $f$ has at least ${p^{D_1+D_2-1}}/{2}$ zeros, which means that $G_0$ has at least  ${p^{D_1+D_2-1}}/{2}$ edges with probability at least $3/4$. Therefore, there exists a polynomial $f$ such that the graph $G_0$ is $K_{s, s}$-free and has at least $\frac{1}{2p}|\cP_0||\cQ_0|$ edges. Let us fix this graph $G_0$.

Then we get our final graph $G$ by sampling an $m$ element subset $\cP$ of $\cP_0$, and an $n$ element subset $\cQ$ of $\cQ_0$. Then, the expected number of edges of $G$ is $\frac{|\cP|}{|\cP_0|}\frac{|\cQ|}{|\cQ_0|}e(G_0)\geq \frac{mn}{2p}$. Hence, there exists a choice of $\cP, \cQ$ for which $e(G)\geq \frac{mn}{2p}$, let us fix this choice. This graph $G$ is clearly algebraic and $K_{s, s}$-free, since it is an induced subgraph of a $K_{s, s}$-free algebraic graph. Furthermore,  $e(G)\geq \frac{mn}{2p}\geq \frac{mn}{2} (2n)^{-1/D_2}=\Omega(mn^{1-1/D_2})$. This completes the proof. 
\end{proof}

We conclude the section by proving Theorem~\ref{thm:construction varieties}, which demonstrates that our upper bounds on point-variety incidences are tight.

\begin{proof}[Proof of Theorem \ref{thm:construction varieties}]
Consider the case $d=D-1$. The general case follows by embedding $\bF_p^{d+1}$ into a higher dimensional space. Let $p$ be a prime between $m^{1/D}$ and $2m^{1/D}$, which exists by Bertrand's postulate. Further, let $D'=\lceil \alpha D\rceil$, and let $G$ be an algebraic graph in $(\bF_p^D, \bF_p^{D'})$ with parts $\cP, \cQ$ of size $m, n$ obtained through Proposition~\ref{prop:algebraic zarankiewicz lower}. 

We construct a set of hypersurfaces $\cV$ from the set of points $\cQ$. For each point $q\in \cQ$, we consider the polynomial $f_q:\bF_p^{D}\to \bF_p$ given by $f_q(x)=f(x, q)$. Since $f_q(x)$ may not be irreducible, let $g_q(x)$ be an irreducible factor of $f_q(x)$ with the largest number of roots in $\cP$. Finally, define the algebraic set $V_q=\{x\in \bF_p^D|g_q(x)=0\}$. We show that the point set $\cP$ and the collection $\cV=\{V_q|q\in \cQ\}$ satisfy all conditions of the theorem.

Since $g_q(x)$ is an irreducible polynomial, the ideal $\langle g_q(x)\rangle \subseteq \bF_p[x_1, \dots, x_D]$ is a prime ideal, and therefore by Proposition 3, page 207 of \cite{CLO}, the algebraic set defined by $g_q(x)$ is irreducible and therefore a variety. Furthermore, the dimension of the variety $V_q=\{x\in \bF_p^D| g_q(x)=0\}$ is $D-1=d$. Hence, $\cV$ is indeed a collection of varieties of required dimension. 

Let us now argue that there are no disctinct varieties $V_{q_1}, \dots, V_{q_s}\in \cV$ with at least $s$ points of $\cP$ in common, where $s=(D+D')^2$. If such $s$ varieties exist, the common neighbourhood of points $q_1, \dots, q_s$ in the graph $G$ contains at least $s$ points, which is not possible since $G$ is $K_{s, s}$-free. Hence, the incidence graph $G(\cP, \cV)$ is $K_{s, s}$-free.

Finally, we argue that there are many incidences between $\cP$ and $\cV$, i.e. that $I(\cP, \cV)\geq \Omega_{D, \alpha}(m^{\frac{D-1}{D}}n)$. The main observation is that the polynomials $f_q(x)$ have at most $(D+D')^2$ irreducible factors, since $\deg f_q(x)\leq (D+D')^2$. Hence, the number of roots of $g_q(x)$ in $\cP$ is at least $\frac{1}{(D+D')^2}$ times the number of roots of $f_q(x)$ in $\cP$. But the number of roots of $f_q(x)$ in $\cP$ summed over all $q\in \cQ$ is the number of edges of $G$ and therefore we have:
\begin{align*}
    I(\cP, \cV)=\sum_{q\in \cQ} |V_q\cap \cP|\geq \sum_{q\in \cQ} \frac{|\{f_q(x)=0\}\cap \cP|}{(D+D')^2}\geq \frac{e(G)}{(D+\lceil \alpha D\rceil )^2}\geq \Omega_{D, \alpha}\Big(\min\{m^{\frac{D-1}{D}} n, mn^{\frac{D'-1}{D'}}\}\Big).
\end{align*}
It is not hard to see that $m^{-1/D}\leq n^{-1/D'}$ since $m^{D'}=m^{\lceil \alpha D\rceil }\geq m^{\alpha D}=n^D$. Therefore, $I(\cP, \cV)\geq \Omega_{D, \alpha}\Big(m^{\frac{D-1}{D}} n\Big)$, which suffices to complete the proof.
\end{proof}

\section{Unit distances}\label{sect:spheres}

\subsection{Geometry of spheres}

In this section, we discuss basic properties of spheres in finite fields, highlighting some differences with the real space. Throughout this section, we consider a non-degenerate bilinear form $\langle \cdot, \cdot \rangle:\bF^d\to \bF$, with respect to which we define the notions of orthogonality and distance. The form is non-degenerate if for every $v\neq 0$ there exists some $w$ such that  $\langle v, w \rangle\neq 0$. It is standard to associate a norm to a bilinear form by defining $\|v\|^2=\langle v, v\rangle$. In what follows, a \emph{unit sphere} with center $w$ is defined to be the set of points $x\in \bF^d$ for which $\|x-w\|^2=1$.

Furthermore, we say that vectors $u, v\in \bF^d$ are \textit{orthogonal} if $\langle u, v\rangle=0$. One can generalize the notion to affine flats and say that $U, V\subseteq \bF^d$ are \textit{orthogonal} if for all points $u, u'\in U$ and $v, v'\in V$ one has $\langle u-u', v-v'\rangle =0$. A key property of orthogonal subspaces we use is that they satisfy $\dim U+\dim V\leq d$. Furthermore, for vectors $v_1, \dots, v_k\in \bF^d$, we denote the \textit{affine span} of $v_1, \dots, v_k$ by $\aff\{v_1, \dots, v_k\}$. More formally, we have $\aff\{v_1, \dots, v_k\}=\{\sum_{i=1}^d \lambda_i v_i | \lambda_i\in \bF_q, \lambda_1+\dots+\lambda_d=1\}$.

Finally, we define a specific bilinear form $\langle u, v\rangle_d$ and the associated norm $\|v\|_d$ on the space $\bF_q^d$ as follows. When $d\not\equiv 1\bmod 4$, we let $\langle u, v\rangle_d$ be the standard inner product defined by $\langle u, v\rangle_d=u_1v_1+\dots+u_dv_d$, while for $d\equiv 1\bmod 4$ we define $\langle u, v\rangle_d=u_1v_1+\dots+u_{d-1}v_{d-1}-u_dv_d$. In both cases, we define the norm $\|v\|_d^2=\langle v, v \rangle_d$.

\begin{lemma}\label{lemma:intersections of spheres}
Let $\langle \cdot, \cdot\rangle$ be a nondegenerate bilinear form on $\bF^d$ and $S_1, \dots, S_k$ be unit spheres in $\bF^d$, defined with respect to $\langle\cdot, \cdot\rangle$. Then there exists an affine subspace $U\subseteq \bF^d$ such that $S_1\cap \dots\cap S_k=S_1\cap U$. Moreover, $U$ is orthogonal to the affine flat spanned by the centers of $S_1, \dots, S_k$.
\end{lemma}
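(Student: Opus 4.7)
The plan is a direct computation: expanding the defining equations of the unit spheres and subtracting them pairwise shows that the intersection with $S_1$ is governed by affine-linear equations. Assuming the form $\langle \cdot,\cdot\rangle$ is symmetric, the equation of $S_i$ can be written as
\[\langle x,x\rangle - 2\langle x, w_i\rangle + \|w_i\|^2 = 1,\]
where $w_i$ is the center of $S_i$. Subtracting the equations for $S_1$ and $S_i$ eliminates the quadratic term $\langle x,x\rangle$, yielding
\[2\langle x, w_1-w_i\rangle = \|w_1\|^2 - \|w_i\|^2,\]
which defines an affine hyperplane $H_i$ (with normal direction $w_1-w_i$) when $w_1\neq w_i$, and is the whole space when $w_1=w_i$.

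\textbf{Step 1: Identifying $U$.} I would set $U = H_2 \cap H_3 \cap \dots \cap H_k$, which is an affine subspace (possibly empty, in which case the conclusion is trivial). The calculation above shows that for $x\in S_1$, the condition $x\in S_i$ is equivalent to $x\in H_i$. Intersecting over $i=2,\dots,k$ gives $S_1 \cap S_2 \cap \dots \cap S_k = S_1 \cap U$, as required.

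\textbf{Step 2: Orthogonality.} Here I would translate everything by $-w_1$ so that all affine subspaces pass through the origin, and work with the associated linear directions. The linear direction of $U$ is $\{v \in \bF^d : \langle v, w_1-w_i\rangle = 0 \text{ for all } 2\le i\le k\}$, while the linear direction of $\aff\{w_1,\dots,w_k\}$ is $\spn\{w_i-w_1 : 2\le i\le k\}$. By the very definition of $U$, every vector in the latter is orthogonal (with respect to $\langle\cdot,\cdot\rangle$) to every vector in the former, which matches the paper's definition of orthogonality between affine flats.

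\textbf{Main obstacle.} The argument is essentially a one-line calculation, so there is no serious obstacle. The only minor care required is symmetry of $\langle \cdot, \cdot\rangle$ (needed to combine the cross terms $\langle x,w_i\rangle$ and $\langle w_i,x\rangle$) and the mild nuisance of characteristic $2$, where the factor of $2$ on the left-hand side of the hyperplane equation vanishes; in that setting the equation still cuts out an affine subspace (or all of $\bF^d$, or the empty set) since it is still affine-linear in $x$, and the orthogonality of directions is unaffected.
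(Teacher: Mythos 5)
Your proposal is correct and takes essentially the same approach as the paper: both subtract the sphere equations pairwise to eliminate the quadratic term, obtain affine hyperplanes with normal $w_1-w_i$, and read off orthogonality from these normals; the paper merely packages the identical calculation as an induction on $k$, intersecting one hyperplane at a time rather than all at once. One small inaccuracy in your closing aside: in characteristic $2$ the equation $2\langle x, w_1-w_i\rangle = \|w_1\|^2-\|w_i\|^2$ degenerates to a constant condition, so $U$ is either all of $\bF^d$ or empty, and if $U=\bF^d$ with the centers not all equal the orthogonality conclusion actually fails by nondegeneracy; however, the paper's own proof carries the same implicit restriction to odd characteristic, and all applications in the paper are over such fields.
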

\begin{proof}
We prove the statement by induction on $k$. Let us denote the centers of the spheres $S_1, \dots, S_k$ by $w_1, \dots, w_k$.

When $k=1$, we may take $U=\bF^d$. For $k>1$, we assume by our induction hypothesis that $S_1\cap \dots\cap S_{k-1}=S_1\cap U$ for some affine subspace $U\subseteq \bF^d$, which is orthogonal to $\aff\{w_1, \dots, w_{k-1}\}$. Hence, we have $S_1\cap \dots\cap S_{k-1}\cap S_k=S_1\cap S_k\cap U$. Consider the defining equations for $S_1$ and $S_k$, which are \[\langle x-w_1, x-w_1\rangle=1 \text{ and } \langle x-w_k, x-w_k\rangle=1.\] By subtracting these two equations, we have $2\langle x, w_k-w_1\rangle+\langle w_1, w_1\rangle-\langle w_k, w_k\rangle=0$ for any $x\in S_1\cap S_k$. Note that this equation is linear in $x$ and hence it defines a hyperplane $H$, whose normal vector is $w_1-w_k$. We conclude that $S_1\cap S_k=S_1\cap H$ and therefore $S_1\cap \dots \cap S_k=S_1\cap S_k\cap U=S_1\cap (U\cap H)$, where $U\cap H$ is an affine space. 

Let us now argue that $U\cap H$ is orthogonal to $\aff\{w_1, \dots, w_k\}$. Let $u, u'\in U\cap H$ and $v, v'\in \aff\{w_1, \dots, w_k\}$ be arbitrary vectors. One may write $v-v'=\sum_{i=2}^k \lambda_i (w_i-w_1)$ with suitable $\lambda_2,\dots,\lambda_k\in \bF$. Therefore, we have 
\begin{align*}
\langle u-u', v-v'\rangle =\Big\langle u-u', \sum_{i=2}^{k-1} \lambda_i (w_i-w_1)\Big\rangle+\left\langle u-u', \lambda_k (w_k-w_1)\right\rangle.
\end{align*}
Note that the first term is zero since $u, u'\in U$, while the second term is zero because $u, u'\in H$. Hence, $U\cap H$ and $\aff\{w_1 \dots, w_k\}$ are orthogonal. 
\end{proof}

One counter-intuitive feature of spheres over finite fields is that they can contain flats. In the next lemma, we study some properties of such flats. 

\begin{lemma}\label{lemma:isotropic flats}
Let $\bF$ be a field of characteristic different from 2, let $V$ be a flat, and let $S$ be a unit sphere in $\bF^d$ centered at $w$. If $V\subseteq S$, then for any $x, y\in V$ we have $\langle x-y, x-y\rangle =0$ and $\langle x-w, x-y\rangle =0$.
\end{lemma}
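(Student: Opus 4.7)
The plan is to reduce everything to a one-parameter identity on the affine line through $x$ and $y$, which lies in $V \subseteq S$ since $V$ is a flat. Setting $u = x - w$ and $v = y - w$, the fact that $x, y \in S$ gives $\langle u, u\rangle = \langle v, v\rangle = 1$, while the fact that $\lambda x + (1-\lambda) y \in S$ for every $\lambda \in \bF$ expands via bilinearity to
\[\lambda^2 \langle u, u\rangle + 2\lambda(1-\lambda) \langle u, v\rangle + (1-\lambda)^2 \langle v, v\rangle = 1.\]
Substituting $\langle u, u\rangle = \langle v, v\rangle = 1$ and simplifying would yield the identity $2\lambda(1-\lambda)(\langle u, v\rangle - 1) = 0$, valid for every $\lambda \in \bF$.

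To conclude from this that $\langle u, v\rangle = 1$, I would simply pick a value of $\lambda$ making $2\lambda(1-\lambda)$ nonzero. Since $\mathrm{char}(\bF) \neq 2$, the field $\bF$ has at least three elements, so we can take any $\lambda_0 \in \bF \setminus \{0, 1\}$; then $2\lambda_0(1-\lambda_0) \neq 0$ and therefore $\langle x-w, y-w\rangle = \langle u, v\rangle = 1$. The case $x = y$ is trivial, so this covers what we need.

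The two stated equalities then follow from a single bilinear expansion each, namely
\[\langle x-y, x-y\rangle = \langle u-v, u-v\rangle = \langle u, u\rangle - 2\langle u, v\rangle + \langle v, v\rangle = 1 - 2 + 1 = 0,\]
and
\[\langle x-w, x-y\rangle = \langle u, u-v\rangle = \langle u, u\rangle - \langle u, v\rangle = 1 - 1 = 0.\]
The only potentially subtle point is ensuring the polynomial identity in $\lambda$ admits a nontrivial evaluation, which is exactly what the hypothesis $\mathrm{char}(\bF) \neq 2$ guarantees (guaranteeing both that the coefficient $2$ does not vanish and that a third element exists to serve as $\lambda_0$); apart from this, the argument is entirely mechanical, relying only on bilinearity and the fact that $V$ being a flat forces the entire line through $x, y$ to sit inside the sphere.
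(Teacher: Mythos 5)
Your proof is correct and follows essentially the same route as the paper's: both expand the sphere equation along the affine line through $x$ and $y$ and exploit the resulting polynomial identity in $\lambda$. The only cosmetic difference is that you first extract the scalar relation $\langle x-w, y-w\rangle = 1$ and then derive both stated identities from it, whereas the paper reads the two identities off directly as the vanishing of the quadratic's coefficients; both routes need $\mathrm{char}(\bF) \neq 2$ for the same reason (a nonzero $2$ and at least three field elements), which you make explicit.
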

\begin{proof}
If $x, y\in V$, then $x+\lambda (y-x)\in V$ for every $\lambda\in \bF$. Since $S=\{v\in \bF^d:\langle v-w, v-w\rangle=1\}$ and $V\subseteq S$, we must have 
\[1=\langle x+\lambda (y-x)-w, x+\lambda (y-x)-w\rangle=\langle x-w, x-w\rangle + 2\lambda \langle x-w, y-x\rangle +\lambda^2 \langle y-x, y-x\rangle.\]
Since $\langle x-w, x-w\rangle =1$, we obtain $\lambda^2 \langle y-x, y-x\rangle+2\lambda \langle x-w, y-x\rangle =0$ for all $\lambda$, implying that $\langle y-x, y-x\rangle=0$ and $\langle x-w, y-x\rangle=0$, just as claimed. 
\end{proof}

A flat $V$ for which $\langle x-y, x-y\rangle =0$ for all $x, y\in V$ is called \textit{totally isotropic}.

\begin{lemma}\label{lemma:auxiliary odd d}
Let $\bF$ be a field containing no roots of the equation $x^2=-1$ and let $d=2k+1$. Furthermore, consider the bilinear form $\langle\cdot, \cdot\rangle_d$ on $\bF^d$. Then $\bF^d$ does not contain a pair $(V,w)$, where $V$ is a totally isotropic flat of dimension $k$, and $w$ is a vector of norm $1$ orthogonal to $V$.
\end{lemma}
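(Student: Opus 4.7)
The plan is to argue by contradiction through a discriminant calculation on the orthogonal complement of $w$. First, I would reduce to a purely linear setup: by translating we may take $V$ to be a $k$-dimensional linear subspace, and the condition "$w$ orthogonal to $V$" becomes $\langle w, v\rangle_d = 0$ for every $v \in V$, while preserving $\|w\|_d^2 = 1$. The polarization identity $2\langle u, v\rangle_d = \|u+v\|_d^2 - \|u\|_d^2 - \|v\|_d^2$, valid because $\mathrm{char}(\bF) \neq 2$, upgrades the totally-isotropic hypothesis $\|u-v\|_d^2 = 0$ on $V$ to the stronger statement $\langle u, v\rangle_d = 0$ for all $u, v \in V$.

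The main step is the following reduction. Since $\|w\|_d^2 = 1 \neq 0$, the hyperplane $W = \{x \in \bF^d : \langle x, w\rangle_d = 0\}$ satisfies $\bF^d = W \oplus \bF w$ and the form restricts non-degenerately to $W$, which has dimension $2k$. Because $w \perp V$, we have $V \subseteq W$, so $V$ is a $k$-dimensional totally isotropic subspace of the $2k$-dimensional non-degenerate space $W$. By Witt decomposition (standard for characteristic $\neq 2$), any $2k$-dimensional non-degenerate symmetric bilinear form containing a $k$-dimensional totally isotropic subspace must be hyperbolic, i.e.\ isomorphic to an orthogonal sum of $k$ hyperbolic planes, and hence has discriminant $(-1)^k$ modulo squares.

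To conclude, I would compute the discriminant of $W$ directly. Multiplicativity of the discriminant on the orthogonal decomposition $\bF^d = W \oplus \bF w$ gives $\mathrm{disc}(W) = \mathrm{disc}(\bF^d) \cdot \|w\|_d^{-2} = \mathrm{disc}(\bF^d)$ modulo squares. By the definition of $\langle \cdot, \cdot\rangle_d$, this discriminant equals $1$ when $d \equiv 3 \pmod 4$ and $-1$ when $d \equiv 1 \pmod 4$. Since $d = 2k+1$, the first case forces $k$ odd and the second forces $k$ even. In the first case hyperbolicity of $W$ requires $(-1)^k = -1$ to equal $1$ modulo squares; in the second it requires $(-1)^k = 1$ to equal $-1$ modulo squares. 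Either way $-1$ must be a square in $\bF$, contradicting the hypothesis.

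The main obstacle is really just pinning down the correct linearized meaning of "$w$ orthogonal to $V$" for a vector--flat pair and invoking the Witt/hyperbolic classification cleanly; once these are in place, the two congruence classes $d \equiv 1, 3 \pmod 4$ match up uniformly because the sign convention in $\langle \cdot, \cdot \rangle_d$ is precisely calibrated to make $\mathrm{disc}(\bF^d)$ differ from $(-1)^k$ by $-1$.
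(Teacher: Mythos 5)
Your proof is correct, and it takes a genuinely different route from the paper. The paper's argument is elementary and computational: after translating $V$ to a subspace, it constructs an explicit basis $v_1,\dots,v_k$ of $V$ satisfying $\langle v_i,e_j\rangle_d=\delta_{ij}$ for $i,j\in[k]$ (via Gaussian elimination), adjusts $w$ by an element of $V$ to kill the first $k$ coordinates, then observes that the vectors $v_1-e_1,\dots,v_k-e_k,w$ live in the last $k+1$ coordinates, writes their Gram matrix $G=\diag[-1,\dots,-1,1]$ as $A^{\!T}MA$ with $M$ the restricted form matrix, and extracts $\det(A)^2=-1$. Your argument instead passes to the $2k$-dimensional orthogonal complement $W=w^{\perp}$, uses the bound on Witt index to deduce that $W$ is hyperbolic (hence $\disc(W)=(-1)^k$ modulo squares), and compares against $\disc(W)=\disc(\bF^d)$ modulo squares via multiplicativity. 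Both proofs ultimately exhibit the same discriminant obstruction, but yours factors it through the Witt decomposition machinery, which makes it shorter and more conceptual, at the cost of importing nontrivial quadratic form theory; the paper's version is fully self-contained and avoids any appeal to classification theorems. One minor point: your polarization identity is stated for $\|u+v\|_d^2$ while the isotropy hypothesis is phrased with $\|u-v\|_d^2$, but since (after translation) $V$ is a subspace closed under negation the two are interchangeable, and your conclusion $\langle u,v\rangle_d=0$ on $V\times V$ follows as claimed.
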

\begin{proof}
Assume that $\bF^d$ contains such a pair $(V,w)$. After translation, we may assume that $V$ contains the origin. Let $v_1, \dots, v_k$ be a basis of the subspace $V$ which satisfies $\langle v_i, e_j\rangle_d =\delta_{ij}$ for all $i, j\in [k]$, where $e_1, \dots, e_d$ are the standard basis vectors. Here, $\delta_{ij}$ is defined to be $1$ when $i=j$ and $0$ otherwise. After possibly rearranging the coordinates, Gaussian elimination shows that it always possible to find such a basis. Furthermore, since $V$ is totally isotropic, any vector $w'=w+v$ with $v\in V$ is also of unit norm and orthogonal to $V$. Therefore, we may subtract an appropriate linear combination of $v_1, \dots, v_k$ from $w$ and assume that $\langle w, w\rangle_d =1$ and $\langle w, e_j\rangle_d=0$ for all $j\in [k]$.

To derive a contradiction, we compute the Gram matrix $G$ of the vectors $v_1'=v_1-e_1, \dots, v_k'=v_k-e_k, w$. Note that the first $k$ coordinates of each of these $k+1$ vectors are equal to $0$ and therefore one might consider $v_1', \dots, v_k', w$ as vectors in $\bF^{k+1}$ without changing their Gram matrix. Hence, we have $G(i,j)=\langle v_i', v_j'\rangle_d =\langle v_i, v_j\rangle_d -\langle e_i, v_j\rangle_d -\langle v_i, e_j\rangle_d +\langle e_i, e_j\rangle_d=-\delta_{ij}$ for all $i,j\in [k]$. Similarly, one can compute $G(i,k+1)=G(k+1,i)=\langle v_i', w\rangle_d=\langle v_i, w\rangle_d -\langle e_i, w\rangle_d =0$ and $G(k+1,k+1)=\langle w, w\rangle_d=1$. In conclusion, $G=\diag[-1,\dots,-1,1]$. 

On the other hand, let $A$ be a $(k+1)\times (k+1)$ the matrix whose columns are the vectors $v_1', \dots, v_k', w$ (recall that we may consider these vectors as elements of $\bF^{k+1}$). Recalling the definition of the the bilinear form $\langle v_i', v_j'\rangle_d$, we note that we can rewrite it as $\langle v_i', v_j'\rangle_d=(v_i')^T M v_j'$, where $M=I_{k+1}$ if $d\equiv 3\bmod 4$ and $M=\diag[1, \dots, 1, -1]$ if $d\equiv 1\bmod 4$. Hence, one can express the Gram matrix as $G=A^T M A$ and so $A^TMA=\diag[-1, \dots, -1, 1]$. By taking determinants, we see that $\det(A)^2\det(M)=\det(\diag[-1, \dots, -1, 1])=(-1)^k$. 

The final observation is that $\det(M)=-1$ when $d\equiv 1\bmod 4$ and $\det(M)=1$ otherwise, meaning that $\det(M)=(-1)^{k-1}$. Hence, no matter the parity of $k$, we have $\det(A)^2=-1$. But this is impossible, since the equation $x^2=-1$ has no solution in $\bF$. 
\end{proof}

\subsection{Upper bounds}

In this section, we prove Theorem \ref{thm:unit_distance}. In particular, we prove the following incidence bound between points and unit spheres defined with respect to any nondegenerate bilinear form, from which the theorem immediately follows.

\begin{proposition}\label{prop:point-sphere incidences}
Let $\cP$ be a set of $n$ points  and let $\cS$ be a collection of $n$ unit spheres in $\bF^d$. If the incidence graph $G(\cP, \cS)$ is $K_{s, s}$-free, then the number of incidences between $\cP$ and $\cS$ is at most $O_{d,s}(n^{2-\frac{1}{\lceil d/2\rceil +1}})$.
\end{proposition}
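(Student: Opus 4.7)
My plan is to prove the bound by applying the induced‑forbidden‑bipartite‑subgraph method of Lemma~\ref{lemma:induced_free}, following the template of Theorem~\ref{thm:varieties upper} but refined using the sphere‑intersection geometry developed in Section~5.1. Write $r = \lceil d/2\rceil + 1$. I will construct a bipartite graph $H_d$ analogous to the graph $H_{d,\Delta}$ of Proposition~\ref{prop:forbidden induced subgraph}, with the sphere‑side part $B$ organized into $r$ layers forming a rooted tree of depth $r-1$ with large branching factor $K = K(d,s)$, and $K$ point‑vertices of part $A$ attached to each root‑to‑node chain. By construction, every vertex of $A$ has degree at most $r$, so once I show that $G(\cP, \cS)$ contains no induced copy of $H_d$ in which $A$ is embedded into $\cP$, Lemma~\ref{lemma:induced_free} will yield the desired bound $I(\cP, \cS) \leq O_{d,s}(n^{2-1/r})$.

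To prove induced‑freeness, I would adapt the inductive strategy of Proposition~\ref{prop:forbidden induced subgraph}. By Lemma~\ref{lemma:intersections of spheres}, the intersection $S_1 \cap S_2 \cap S_3^{(i_3)} \cap \dots \cap S_\ell^{(i_3, \dots, i_\ell)}$ along any chain equals $S_1 \cap U_\ell$, where $U_\ell$ is an affine subspace orthogonal to the affine span of the centers $w_2, w_3^{(i_3)}, \dots, w_\ell^{(i_3, \dots, i_\ell)}$ relative to $w_1$. A pigeonhole argument at each branching layer, analogous to the one in Proposition~\ref{prop:forbidden induced subgraph}, lets me select indices $(i_3, \dots, i_r)$ so that every newly chosen center is affinely independent of the previous ones; the dimension of $U_\ell$ then drops by exactly one at every step, ending at $\dim U_r = d-r+1 = \lfloor d/2\rfloor$ at the final layer. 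Thus all $K$ common‑neighbor points at the leaf lie on the sphere $S^{\ast} := S_1 \cap U_r$, a variety of dimension $\lfloor d/2\rfloor - 1$ inside~$U_r$.

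The crucial remaining step, and the main obstacle of the proof, is to conclude that $|S^{\ast} \cap \cP| < K$ for $K$ chosen large enough in terms of $d$ and $s$. Unlike in Proposition~\ref{prop:forbidden induced subgraph}, where the terminal intersection is zero‑dimensional of bounded degree (so a B\'ezout bound suffices), the sphere $S^{\ast}$ can contain $\Theta(q^{\lfloor d/2\rfloor - 1})$ points over $\bF_q$ if it harbors positive‑dimensional totally isotropic flats. I would rule this out by combining Lemmas~\ref{lemma:isotropic flats} and~\ref{lemma:auxiliary odd d} with the $K_{s,s}$‑free hypothesis: by Lemma~\ref{lemma:isotropic flats}, any positive‑dimensional flat $V \subseteq S^{\ast}$ must be totally isotropic and satisfy $\langle x - w_1, x - y\rangle = 0$ for all $x, y \in V$, and Lemma~\ref{lemma:auxiliary odd d} applied to the bilinear form induced on $U_r$ further restricts the possible dimensions of such $V$. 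In the remaining configurations, a refined pigeonhole that reuses the same leaf across several distinct chains of spheres, combined with the global $K_{s,s}$‑freeness, produces a forbidden $K_{s,s}$ elsewhere whenever a sufficiently large flat $V$ occurs inside $S^{\ast}$.

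The hardest part will be making this terminal step rigorous: passing from the algebraic dimension bound on $S^{\ast}$ to a combinatorial bound on $|S^{\ast} \cap \cP|$ requires a delicate interplay between the structure of isotropic flats in low‑dimensional spheres (Lemmas~\ref{lemma:isotropic flats} and~\ref{lemma:auxiliary odd d}) and the global $K_{s,s}$‑free hypothesis. In particular, the parities of $d$ for which Lemma~\ref{lemma:auxiliary odd d} does not directly apply to $U_r$ will likely force an internal induction on $d$, reducing the terminal subproblem to a point‑sphere incidence problem inside $U_r \cong \bF^{\lfloor d/2\rfloor}$ before closing the contradiction.
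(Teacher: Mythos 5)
Your proposal takes a genuinely different route from the paper, and unfortunately it has a gap that I do not believe can be closed. The paper does not use Lemma~\ref{lemma:induced_free} at all here. Instead, it forbids the staircase pattern $\Pi_{d+1}$ on $(d+1)+(d+1)$ vertices (Lemma~\ref{lemma:forbidden pattern}), in which every point $a_i$ \emph{is} incident to the spheres $b_1,\dots,b_{i+1}$, is \emph{not} incident to $b_{i+2}$, and the remaining adjacencies are unconstrained. Because the pattern has $d+1$ layers, the inductive intersection argument runs all the way down to dimension $0$: after intersecting all $d+1$ spheres the residual flat $U$ has dimension $0$, so $S_1\cap\dots\cap S_{d+1}$ is at most a single point, contradicting the two final witnesses $P_d,P_{d+1}$. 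The improved exponent $\lceil d/2\rceil+1$ then comes not from a degree bound but from the specialised Lemma~3.2 of \cite{MST}, which exploits the nested staircase shape of $\Pi_{d+1}$ to give $O_{d,s}\big(n^{2-1/\lceil(d+2)/2\rceil}\big)$ edges. Lemma~\ref{lemma:induced_free} applied to $\Pi_{d+1}$ (whose $A$-side has max degree $d+1$) would only give $O(n^{2-1/(d+1)})$, so the gain really does come from the structure of the pattern, not from its degree.

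Your plan tries to recover the exponent by building an $H_{d,\Delta}$-style tree of depth only $r=\lceil d/2\rceil+1$ so that Lemma~\ref{lemma:induced_free} directly outputs $O(n^{2-1/r})$. The fatal problem is exactly the one you flag: after intersecting only $r$ spheres, $S^\ast=S_1\cap U_r$ is a quadric of dimension up to $\lfloor d/2\rfloor-1$, and over finite fields it can legitimately contain $\Theta(q^{\lfloor d/2\rfloor-1})$ points of $\cP$, in particular an entire totally isotropic flat when $-1$ is a square in $\bF$. Lemma~\ref{lemma:auxiliary odd d} only excludes isotropic $k$-flats when $x^2=-1$ has no root in $\bF$, and the statement of the proposition places no restriction on $\bF$, so this case cannot simply be ruled out. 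Moreover, a large common point set sitting on just $r$ spheres forms a $K_{|S^\ast\cap\cP|,\,r}$, and since $r=\lceil d/2\rceil+1$ is a constant fixed by $d$ while $s$ is arbitrary, this configuration does \emph{not} violate $K_{s,s}$-freeness when $s>r$. The ``refined pigeonhole reusing the same leaf across several chains'' you gesture at would have to manufacture $s$ spheres through the isotropic flat, but nothing in the dependent random choice embedding of Lemma~\ref{lemma:induced_free} supplies that many spheres. So the terminal step is not merely hard to make rigorous — the underlying claim that $G(\cP,\cS)$ avoids an induced copy of your depth-$r$ tree is not true in general. To get the stated exponent, you need a forbidden configuration of depth $d+1$ (so the geometry genuinely pins down a point) together with a combinatorial lemma that is sensitive to the nested structure, which is exactly what the paper's Lemma~\ref{lemma:forbidden pattern} plus Lemma~3.2 of \cite{MST} accomplish.
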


Following the ideas of \cite{MST}, we prove that  there exists a simple family $\mathcal{F}_d$ of graphs with the property that an incidence graph of points and spheres in $\bF^{d-1}$ does not contain any member of $\mathcal{F}_d$ as an induced subgraph. The family $\mathcal{F}_d$ is defined to contain all bipartite graphs on vertex classes $\{a_1,\dots,a_d\}$ and $\{b_1,\dots,b_d\}$, where $a_ib_j$ is an edge for $i\geq j-1$, and $a_ib_{i+2}$ is a non-edge for $i=1,\dots,d-2$. 

A simpler way to visualise this family is through the notion of a \textit{pattern}. A \textit{pattern} is an edge labeling of a complete bipartite graph, where every edge is labeled $0$, $1$ or $*$. A pattern $\Pi$ corresponds to a family of graphs $\cF_\Pi$ on the same vertex set, which contains the graphs $F$ such that all edges of the pattern labeled by $0$ do not appear in $F$, all edges labeled by $1$ appear in $F$, while the edges labeled by $*$ may or may not appear. Finally, we say that a graph $G$ contains a pattern $\Pi$ if it contains an induced copy of some graph in the family $\cF_\Pi$.

One can express the condition that the incidence graph does not contain an induced member of the family $\cF_d$ by forbidding the following pattern. Namely, let $\Pi_{d}$ be $2d$ vertices $a_1, \dots, a_{d}$, $b_1, \dots, b_{d}$ where $a_ib_{j}$ is labeled by $1$ for $i\geq j-1$ and $a_ib_{i+2}$ is labeled by $0$, while all other edges are labeled by $*$. See Figure \ref{fig1} for an illustration.

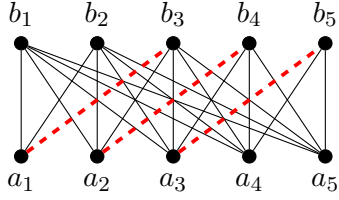
\begin{figure}[ht]
    \begin{center}
	\begin{tikzpicture}
            \node[vertex,label=below:$a_1$] (a1) at (-2,-1) {};
            \node[vertex,label=below:$a_2$] (a2) at (-1,-1) {};
            \node[vertex,label=below:$a_3$] (a3) at (0,-1) {};
            \node[vertex,label=below:$a_4$] (a4) at (1,-1) {};
            \node[vertex,label=below:$a_5$] (a5) at (2,-1) {};

             \node[vertex,label=above:$b_1$] (b1) at (-2,0.5) {};
            \node[vertex,label=above:$b_2$] (b2) at (-1,0.5) {};
            \node[vertex,label=above:$b_3$] (b3) at (0,0.5) {};
            \node[vertex,label=above:$b_4$] (b4) at (1,0.5) {};
            \node[vertex,label=above:$b_5$] (b5) at (2,0.5) {};

            \draw (a5) edge (b5);
            \draw (a5) edge (b4);
            \draw (a5) edge (b3);
            \draw (a5) edge (b2);
            \draw (a5) edge (b1);

            \draw (a4) edge (b5);
            \draw (a4) edge (b4);
            \draw (a4) edge (b3);
            \draw (a4) edge (b2);
            \draw (a4) edge (b1);

            \draw (a3) edge (b4);
            \draw (a3) edge (b3);
            \draw (a3) edge (b2);
            \draw (a3) edge (b1);

            \draw (a2) edge (b3);
            \draw (a2) edge (b2);
            \draw (a2) edge (b1);

            \draw (a1) edge (b2);
            \draw (a1) edge (b1);

            \draw[line width=0.5mm, red, dashed] (a3) -- (b5);
            \draw[line width=0.5mm, red, dashed] (a2) -- (b4);
            \draw[line width=0.5mm, red, dashed] (a1) -- (b3);
        \end{tikzpicture}
    \end{center}
    \caption{The pattern $\Pi_5$, where black edges are labeled 1, and the red dashed edges are labeled 0.}
    \label{fig1}
\end{figure}

We begin by presenting the basic geometric lemma which shows that incidence graphs of points and unit spheres in $\bF^d$ avoid the pattern $\Pi_{d+1}$.

\begin{lemma}\label{lemma:forbidden pattern}
Let $\cP$ be a set of $n$ points and let $\cS$ be a set of $n$ spheres in $\bF^d$. The incidence graph $G(\cP, \cS)$ does not contain the pattern $\Pi_{d+1}$ defined above.
\end{lemma}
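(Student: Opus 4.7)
The plan is to argue by contradiction. Suppose $G(\cP,\cS)$ contains $\Pi_{d+1}$ as an induced pattern, realised by distinct points $p_1,\dots,p_{d+1}\in \cP$ (playing $a_1,\dots,a_{d+1}$) and distinct unit spheres $S_1,\dots,S_{d+1}\in \cS$ with centers $w_1,\dots,w_{d+1}$ (playing $b_1,\dots,b_{d+1}$). The $1$-labels of the pattern encode $p_i\in S_j$ whenever $j\le i+1$, and the $0$-labels encode $p_i\notin S_{i+2}$ for $1\le i\le d-1$. Setting $Q_k = S_1\cap\dots\cap S_k$, each point $p_i$ with $1\le i\le d-1$ lies in $Q_{i+1}$ but not in $Q_{i+2}$, producing a strict descending chain $Q_2\supsetneq Q_3\supsetneq\cdots\supsetneq Q_{d+1}$, and in particular the two distinct points $p_d$ and $p_{d+1}$ both lie in $Q_{d+1}$.

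The key step is a dimension count based on Lemma~\ref{lemma:intersections of spheres}. The inductive proof of that lemma actually builds a descending chain of affine subspaces $\bF^d = U_1\supseteq U_2\supseteq\cdots\supseteq U_{d+1}$ with $Q_k = S_1\cap U_k$ and $U_{k+1} = U_k\cap H_k$, where $H_k$ is the affine hyperplane $\{x:2\langle x,w_{k+1}-w_1\rangle = \langle w_{k+1},w_{k+1}\rangle-\langle w_1,w_1\rangle\}$ arising from subtracting the defining equations of $S_1$ and $S_{k+1}$. All centers are pairwise distinct (distinct unit spheres have distinct centers), so by non-degeneracy of the bilinear form each vector $w_{k+1}-w_1$ defines a non-trivial linear functional; hence every $H_k$ is a proper affine hyperplane and, in particular, $\dim U_2 = d-1$. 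Next, I would observe that whenever $Q_{k+1}\subsetneq Q_k$, any point $p \in (S_1\cap U_k)\setminus(S_1\cap U_{k+1})$ lies in $S_1$ and therefore in $U_k\setminus U_{k+1}$, forcing $U_{k+1}\subsetneq U_k$ as affine subspaces; strict containment of affine subspaces drops the dimension by at least one, so $\dim U_{k+1}\le \dim U_k - 1$.

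Applying this strict dimension drop for $k=2,3,\dots,d$, a total of $d-1$ times, yields $\dim U_{d+1}\le (d-1)-(d-1)=0$. Thus $U_{d+1}$ is either empty or a single point, so $Q_{d+1}\subseteq U_{d+1}$ contains at most one element, which contradicts the existence of the two distinct points $p_d,p_{d+1}\in Q_{d+1}$. The main obstacle I expect is ensuring that the subspaces $U_k$ furnished by Lemma~\ref{lemma:intersections of spheres} genuinely form a nested chain, since otherwise strict containment $Q_{k+1}\subsetneq Q_k$ need not translate to $U_{k+1}\subsetneq U_k$; this is handled by appealing to the explicit inductive construction inside the proof of that lemma rather than just to its statement. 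Everything else is a clean bookkeeping of dimensions.
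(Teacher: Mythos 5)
Your proof is correct and follows essentially the same route as the paper's: translate the $0$-labeled edges $a_ib_{i+2}$ into witnesses for a strictly descending chain of affine flats $U_k$ (arising from the inductive construction in Lemma~\ref{lemma:intersections of spheres}), deduce $\dim U_{d+1}\le 0$, and contradict $p_d,p_{d+1}\in S_1\cap\dots\cap S_{d+1}$. The only cosmetic difference is that the paper phrases the dimension count as a single induction showing $\dim U\le d-k+1$, whereas you split it into a base case $\dim U_2=d-1$ (justified by $w_1\neq w_2$ and non-degeneracy) plus $d-1$ strict drops; your handling of the base case is in fact slightly more careful than the paper's, which refers to a nonexistent point $P_0$ at $k=2$.
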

\begin{proof}
For contradiction, let us assume that $G(\cP, \cS)$ does contain $\Pi_{d+1}$, whose vertices $a_1, \dots, a_{d+1}$ correspond to points $P_1, \dots, P_{d+1}$ and $b_1, \dots, b_{d+1}$ correspond to unit spheres $S_1, \dots, S_{d+1}$.

We  show by induction that $S_1\cap \dots \cap S_k=S_1\cap U$ for a affine flat $U$ of dimension at most $d-k+1$, for all $k\leq d+1$. For $k=1$ this statement is trivial. For $k\geq 2$, we may assume that $S_1\cap \dots\cap S_{k-1}=S_1\cap U$ for some flat $U$ of dimension at most $d-k+2$. By Lemma~\ref{lemma:intersections of spheres}, there exists a flat $H$ for which $S_1\cap S_k=S_1 \cap H$. Let $V=U \cap H$. Then
$S_1\cap \dots\cap S_{k}=S_1\cap U \cap S_k=S_1\cap (U \cap H)=S_1 \cap V$. The point $P_{k-2}$ is contained in the spheres $S_1, \dots, S_{k-1}$, but not in $S_{k}$. Therefore, $P_{k-2}$ is a point of $U$ not contained in $V$, implying that $\dim(V)\leq \dim(U)-1\leq d-k+1$.

Applying this claim with $k=d+1$, we conclude that there exists a $0$-dimensional affine space $U$ for which $S_1\cap \dots \cap S_{d+1}=S_1\cap U$. In other words, $S_1\cap \dots \cap S_{d+1}$ is a single point. Then, it is not possible for both $P_{d}$ and $P_{d+1}$ to be contained in $S_1\cap \dots \cap S_{d+1}$, presenting a contradiction.
\end{proof}

\begin{proposition}
Let $G$ be a bipartite graph on $2n$ vertices which does not contain $K_{s, s}$ or the pattern $\Pi_{d+1}$. Then, $G$ has at most $O_{d,s}\Big(n^{2-\frac{1}{\lceil d/2\rceil +1}}\Big)$ edges.
\end{proposition}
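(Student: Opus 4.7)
The plan is to prove this bound by contradiction, adapting the induced Tur\'an framework of \cite{HMST, MST} to the specific pattern $\Pi_{d+1}$. Set $k = \lceil d/2 \rceil$ and assume $e(G) > C n^{2-1/(k+1)}$ for a sufficiently large constant $C = C(d, s)$; I will locate an induced copy of some graph from $\cF_{\Pi_{d+1}}$ in $G$.

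The proof proceeds in two main stages. First, I apply a dependent random choice argument of depth $k+1$ on the $B$-side: sampling $k+1$ random vertices in $B$ yields, by Jensen's inequality, vertices $b_1, \ldots, b_{k+1} \in B$ whose common neighborhood $W = \bigcap_i N(b_i) \cap A$ has size at least $T$, with $T = T(C, d, s)$ that can be made arbitrarily large as $C \to \infty$. Moreover, by a greedy refinement that exploits $K_{s,s}$-freeness (which forces, for any subset $W' \subseteq A$ of size at least $s$, at most $s-1$ vertices of $B$ to satisfy $W' \subseteq N(b)$), I arrange a strict filtration $A = W_0 \supsetneq W_1 \supsetneq \ldots \supsetneq W_{k+1} = W$, where $W_j := \bigcap_{i \leq j} N(b_i) \cap A$. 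The existence of the next $b_{j+1}$ producing strict containment follows because only at most $s-1$ choices of $b \in B$ fail to shrink $W_j$ properly.

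Second, using this filtration, I construct $\Pi_{d+1}$ explicitly. The DRC vertices $b_1, \ldots, b_{k+1}$ become the first $k+1$ vertices on the $B$-side of the pattern. For $i = 1, \ldots, k-1$, pick $a_i \in W_{i+1} \setminus W_{i+2}$ (non-empty by the strict filtration), which by construction makes $a_i$ adjacent to $b_1, \ldots, b_{i+1}$ and non-adjacent to $b_{i+2}$, matching the required edges and non-edges of the pattern (the edges to $b_{i+3}, \ldots, b_{k+1}$ are free in $\Pi_{d+1}$). For $i \geq k$, select $a_i \in W$, carefully chosen so that common neighborhoods of consecutive suffixes $\{a_\ell, \ldots, a_{d+1}\}$ remain large. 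Finally, the remaining vertices $b_{k+2}, \ldots, b_{d+1}$ of the pattern are found iteratively: at step $j$ one selects $b_j$ in the set $\bigcap_{\ell=j-1}^{d+1} N(a_\ell) \setminus N(a_{j-2})$, whose non-emptiness follows from the $K_{s,s}$-free condition applied to the family $\{a_{j-2}, a_{j-1}, \ldots, a_{d+1}\}$ together with the large common neighborhood property ensured in the selection of the $a_\ell$'s.

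The main obstacle is the joint control of several competing quantities in this argument: the strict filtration at the DRC step, the sizes of the common neighborhoods of the chosen $a_i$'s for $i \geq k$, and the $K_{s,s}$-free condition at each iteration of the final extension. These are reconciled by iterating dependent random choice on both sides of $G$, so that both $|W|$ and the common neighborhoods of the selected $a$-subfamilies are simultaneously large compared to any constant depending on $d$ and $s$. The factor $\lceil d/2 \rceil + 1$ in the exponent arises precisely because the depth of the initial DRC filtration is enough to realize the ``universal'' half of $\Pi_{d+1}$, while the remaining ``staircase'' half of non-edges is absorbed by $K_{s,s}$-freeness in the final extension, avoiding the naive bound $n^{2-1/(d+1)}$ that the maximum degree of $\Pi_{d+1}$ would produce.
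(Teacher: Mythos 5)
The paper's own proof is a one-line citation of Lemma~3.2 in \cite{MST}, so you are attempting to reprove that lemma from scratch. Your high-level plan (dependent random choice of depth $k+1=\lceil d/2\rceil+1$ on the $B$-side, then embed $\Pi_{d+1}$ by using the resulting nested neighbourhoods for the first $k+1$ of the $b$'s and a second round of selection for the remaining vertices) is the right framework and does produce the correct exponent. However, the crucial step where you select the late vertices $b_{k+2},\dots,b_{d+1}$ is not justified correctly, and this is a genuine gap rather than a presentational one.

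You assert that $\bigcap_{\ell=j-1}^{d+1}N(a_\ell)\setminus N(a_{j-2})\neq\emptyset$ ``follows from the $K_{s,s}$-free condition applied to the family $\{a_{j-2},\dots,a_{d+1}\}$.'' That family has only $d+4-j\leq \lfloor d/2\rfloor+2$ elements, which can be far smaller than $s$; $K_{s,s}$-freeness gives no information whatsoever about the common neighbourhood of fewer than $s$ vertices, so the containment $\bigcap_{\ell\geq j-1}N(a_\ell)\subseteq N(a_{j-2})$ cannot be ruled out this way. Indeed, once $a_{j-2}$ has already been fixed there is no reason the containment should fail. The correct mechanism is to choose the late $a_i$'s in decreasing order $i=d+1,d,\dots,k$, and at the moment of choosing $a_i$ to apply $K_{s,s}$-freeness to the \emph{many} candidate vertices of the cleaned set $W$, not to the handful $\{a_{j-2},\dots,a_{d+1}\}$: if every remaining $a\in W$ satisfied $\bigcap_{\ell\geq i+1}N(a_\ell)\subseteq N(a)$, then $\geq s$ such vertices of $W$ together with $\geq s$ vertices of that common neighbourhood would form a $K_{s,s}$. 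Making this precise requires the standard DRC cleaning (any $k+1$ vertices of $W$ have $\geq s$ common neighbours), which you only gesture at with ``iterating dependent random choice on both sides'' -- a phrase that does not describe the actual mechanism. A secondary issue: you claim you can simultaneously make $|W|$ large and force a strict filtration $W_2\supsetneq\cdots\supsetneq W_{k+1}$, but replacing a $b_{j+1}$ whenever $W_j=W_{j+1}$ changes all subsequent $W_\ell$'s and may shrink $W$ drastically, so this also requires a real argument rather than an appeal to ``at most $s-1$ bad choices.''
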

\begin{proof}
In \cite{MST}, Lemma 3.2, it is proved that a balanced bipartite graph on $2n$ vertices containing no $K_{s, s}$ or $\Pi_{d+1}$ has at most $O_{d,s}\Big(n^{2-\frac{1}{\lceil (d+2)/2\rceil }}\Big)$.
\end{proof}

\subsection{Constructions with many unit distances}

In this section, our goal is to prove Theorem \ref{thm:sphere_construction}. More precisely, for every $n$ and $d$, we construct a set $\cP$ of $n$ points in $\bF_q^d$, where $q=p^r$ is some prime power, with the properties that $\cP$ spans $\Omega(n^{2-\frac{1}{\lceil d/2\rceil+1}})$ unit distances and that the unit distance graph does not contain $K_{s, s}$. 

Our construction is based on so called variety-evasive sets. For fixed parameters $1\leq k\leq d$ and $\Delta, s>0$, we say that a set $U$ is $(k, s)$-variety-evasive if every variety $V\subseteq \bF_q^d$ of dimension $k$ and degree at most $\Delta$ intersects $U$ in less than $s$ points. Here we only consider varieties of degree at most $2$ and therefore we fix $\Delta=2$. Dvir, Koll\'ar and Lovett constructed large variety-evasive sets and in particular the following theorem is a special case of Corollary 6.1 from \cite{DKL}.

\begin{theorem}\label{thm:variety evasive sets}
For every positive integer $d$ and prime power $q$, there exist a constant $s=s(d)$ and set $U$ of $q^{d-k}$ points in $\bF_q^d$ such that any $k$-dimensional variety of degree at most $2$ intersects $U$ in less than $s$ points.
\end{theorem}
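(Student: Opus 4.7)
The plan is to construct $U$ as the set of $\bF_q$-rational points of a carefully chosen irreducible algebraic variety $X\subseteq \bF_q^d$ of dimension $d-k$ whose degree is bounded by a function of $d$ alone, and then apply Bezout's theorem (Theorem~\ref{thm:preliminary}) to control $|U\cap V|$ for every $k$-dimensional variety $V$ of degree at most $2$.

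First, I would take $X$ to be the graph of an explicit polynomial map $\phi=(\phi_1,\dots,\phi_k)\colon \bF_q^{d-k}\to \bF_q^k$, i.e.\ $X=\{(t,\phi(t)):t\in \bF_q^{d-k}\}$. Choosing each $\phi_i$ to have degree at most a constant $D_0=D_0(d)$, the variety $X$ is irreducible of dimension $d-k$ and has projective degree at most some $D=D(d)$, while $|X(\bF_q)|=q^{d-k}$ exactly. Setting $U=X(\bF_q)$ already delivers a set of the required cardinality, and the point count is uniform in $q$ by construction.

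Next, the key step is to arrange that $X$ meets every $k$-dimensional variety $V$ of degree at most $2$ in a $0$-dimensional set, i.e.\ that no positive-dimensional component of $X\cap V$ exists. Note $\dim X+\dim V=d$, so the expected dimension of the intersection is $0$. When $k<d/2$ we have $\dim X>\dim V$, so $X\not\subseteq V$ holds automatically and transversality follows from the irreducibility of $X$ together with Theorem~\ref{thm:preliminary}. When $k\geq d/2$ the condition ``some component of $X$ lies in some $k$-dim degree-$\leq 2$ variety'' carves out a proper Zariski-closed subset of the (bounded-dimensional) parameter space of polynomial maps of degree $\leq D_0$; a sufficiently generic $\phi$ avoids it, and one can write down such a $\phi$ explicitly using high-degree monomials or Frobenius-twist constructions so that the transversality holds uniformly in the prime power $q$.

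Finally, once $X\cap V$ is $0$-dimensional with no component of $X$ lying in $V$, an iterated application of Theorem~\ref{thm:preliminary}, in the same manner as in the proof of Proposition~\ref{prop:forbidden induced subgraph}, yields $|X\cap V|\leq \deg(X)\cdot \deg(V)\leq 2D$. Hence $|U\cap V|\leq 2D$ for every $k$-dimensional variety $V$ of degree at most $2$, and choosing $s(d)=2D+1$ completes the argument.

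The main obstacle is the second step: producing a single construction of $\phi$ (or of $X$) whose degree depends only on $d$ and whose graph is simultaneously transverse to \emph{every} $k$-dimensional variety of degree at most $2$, with the transversality valid uniformly over all prime powers $q$. A straightforward random-set or random-polynomial argument only yields $s$ growing like $\log q$, so genuine algebraic-geometric input (the explicit families built in \cite{DKL}) is needed to obtain a bound $s=s(d)$ independent of $q$.
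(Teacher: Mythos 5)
This statement is not proved in the paper; it is cited verbatim as a special case of Corollary~6.1 of Dvir, Koll\'ar and Lovett \cite{DKL}, and the paper then uses it as a black box. So the right comparison is between your sketch and the argument in \cite{DKL}.

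Your sketch captures the high-level shape of the Dvir--Koll\'ar--Lovett construction (take $U$ to be the $\bF_q$-points of an explicit bounded-degree variety $X$ of dimension $d-k$, then bound $|U\cap V|$ via B\'ezout), but it has a concrete error that undercuts the ``easy'' case. You claim that when $k<d/2$, ``transversality follows from the irreducibility of $X$ together with Theorem~\ref{thm:preliminary}.'' It does not. Theorem~\ref{thm:preliminary} only gives $\dim(Z_i)\le \min(\dim X,\dim V)-1 = k-1$ for the components of $X\cap V$, and for $k\ge 2$ this is still positive-dimensional. Irreducibility of $X$ and $X\not\subseteq V$ do not rule out $X\cap V$ containing, say, a line, which could then carry $\Theta(q)$ rational points of $U$. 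In other words, $\dim X + \dim V = d$ only makes $0$-dimensional intersection the \emph{expected} outcome, not the guaranteed one; forcing it for \emph{every} degree-$\le 2$ variety $V$ simultaneously, uniformly in $q$, is precisely the content of \cite{DKL}, for all $k$, not only $k\ge d/2$.

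For the hard step you correctly identify the obstruction (a generic or random choice of $\phi$ does not give a bound $s=s(d)$ independent of $q$), but then defer entirely to ``the explicit families built in \cite{DKL}.'' Since that is where all the actual work lies, the proposal is an outline of the intended strategy rather than a proof. For the record, \cite{DKL} do not argue by genericity: they define $X$ as an explicit complete intersection of the form $\{x_i^{d_i}=g_i(x_{k+1},\dots,x_d)\}$ with carefully chosen exponents $d_1>\dots>d_k$, and the crux is a degree argument at infinity showing that the projective closure of $X$ meets every low-degree $k$-dimensional variety properly. That analysis is what makes the bound uniform in $q$, and it is the part your sketch omits.
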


Let us now explain the idea behind our constructions. We set $k=\lfloor d/2\rfloor$ and choose $p$ to be a prime with $p\equiv 3\bmod 4$ and $p\approx n^{\frac{1}{\lceil d/2\rceil + 1}}$. Then, we use Theorem~\ref{thm:variety evasive sets} to find a $(k-1, s)$-variety evasive set $U\subseteq \bF_p^d$ of size $p^{d-k+1}=p^{\lceil d/2\rceil + 1}$. Since $U$ is $(k-1, s)$-variety evasive, we are able to show that the unit distance graph does not contain $K_{s, s}$ as a subgraph. However, in order to ensure many unit distances, we need to take the union of $U$ with a random shift, i.e. we set $\cP_x=U\cup (U+x)$ where $x\in \bF_p^d$ is chosen uniformly at random. 

\begin{proposition}\label{prop:many unit distances}
There exists $x\in \bF_p^d$ for which $\cP_x$ spans $\Omega\Big(|\cP_x|^{2-\frac{1}{\lceil d/2\rceil+1}}\Big)$ unit distances with respect to the norm $\|\cdot \|_d$.
\end{proposition}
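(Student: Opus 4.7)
The approach will be a first-moment argument: I will count unit distances in $\cP_x$ that arise between $U$ and $U+x$ and show that, averaged over a uniformly random shift $x \in \bF_p^d$, this count is $\Omega(p^{2\lceil d/2\rceil +1})$. A pair of points $u \in U$ and $u'+x \in U+x$ is at unit distance precisely when $\|u - u' - x\|_d^2 = 1$. Let $N_x$ denote the number of ordered pairs $(u,u') \in U \times U$ satisfying this equation. Note that the diagonal case $u = u'+x$ would force $\|0\|_d^2=1$, which is impossible; furthermore, a single unordered pair in $\cP_x$ is counted at most twice by $N_x$. Hence the number of unit distances in $\cP_x$ is at least $N_x/2$.

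The only nontrivial ingredient will be the standard finite-field sphere-size estimate: using Gauss sums (or direct quadratic-character computations) one has, for either of the non-degenerate forms considered,
\[
\big|\{z \in \bF_p^d : \|z\|_d^2 = 1\}\big| = p^{d-1} + O\!\big(p^{(d-1)/2}\big),
\]
so the left-hand side is at least $p^{d-1}/2$ once $p$ is sufficiently large. Consequently, for any fixed $y \in \bF_p^d$, a uniformly random $x$ satisfies $\|y-x\|_d^2 = 1$ with probability at least $1/(2p)$. By linearity of expectation,
\[
\bE_x[N_x] \;\geq\; \sum_{(u,u') \in U \times U} \frac{1}{2p} \;=\; \frac{|U|^2}{2p}.
\]

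Since $k = \lfloor d/2 \rfloor$, one checks that $d-k+1 = \lceil d/2 \rceil + 1$ in both parities of $d$, so $|U| = p^{\lceil d/2 \rceil + 1}$ and the expectation above is $\Omega(p^{2\lceil d/2\rceil+1})$. Picking an $x$ attaining this expectation, $\cP_x$ spans $\Omega(p^{2\lceil d/2\rceil+1})$ unit distances. On the other hand $|\cP_x| \leq 2|U| = 2p^{\lceil d/2 \rceil + 1}$, so
\[
|\cP_x|^{2 - 1/(\lceil d/2\rceil + 1)} \;=\; O\!\big(p^{2\lceil d/2\rceil + 1}\big),
\]
matching the lower bound up to constants and completing the proof. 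The only real obstacle is the finite-field sphere-size estimate; everything else is direct averaging. Note that the variety-evasiveness of $U$ plays no role in this proposition — it is needed only for the separate $K_{s,s}$-freeness claim of Theorem~\ref{thm:sphere_construction}.
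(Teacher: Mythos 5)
Your proof is correct and follows essentially the same first-moment argument as the paper: both average the count of unit distances between $U$ and $U+x$ over a uniformly random shift $x$, both rely on the standard $(1+o(1))p^{d-1}$ solution-count for a nondegenerate quadratic in $\bF_p^d$ (the paper cites Lidl--Niederreiter Theorems 6.26/6.27, you invoke the equivalent Gauss-sum estimate), and both then pick a shift achieving the expectation. Your added bookkeeping about the diagonal case and the factor-of-two overcount is a slightly more careful presentation of the same argument.
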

\begin{proof}
Choose $x$ form the uniform distribution on $\bF_p^d$. The number of unit distances spanned by $\cP_x$ is at least the number of pairs $u, v\in U$ such that $\|u-(v+x)\|_d=1$. For a fixed pair $u, v\in U$, the equation $\|u-v-x\|_d=1$ is quadratic in $x$ and therefore it has $(1+o(1))p^{d-1}$ solutions (see e.g. Theorems 6.26 and 6.27 in \cite{LN}). Hence, for fixed $u, v\in U$ we have $\Pb[\|u-(v+x)\|_d=1]=(1+o(1))p^{-1}$ and so the expected number of pairs $u, v\in U$ with $\|u-(v+x)\|_d=1$ is $(1+o(1))|U|^2/p$. Therefore, $x$ can be chosen such that the number of unit distances spanned by $\cP_x$ is $\Omega(|U|^2/p)=\Omega\Big(|\cP_x|^{2-\frac{1}{\lceil d/2\rceil+1}}\Big)$.
\end{proof}

Let us fix the value $x$ from Proposition~\ref{prop:many unit distances} from now on and write $\cP=\cP_x$. Note that the set $\cP$ is $(k-1, 2s)$-variety evasive, since no variety of dimension $k-1$ and degree at most $2$ can contain more than $s$ points of either $U$ or $U+x$. 

\begin{proposition}
The unit distance graph of $\cP$ with respect to the norm $\|\cdot \|_d$ does not contain $K_{4s, 4s}$ as a subgraph.
\end{proposition}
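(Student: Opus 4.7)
The plan is to argue by contradiction: suppose the unit distance graph of $\cP$ contains a copy of $K_{4s,4s}$ with parts $A,B$. Since $\cP=U\cup(U+x)$, applying pigeonhole separately to $A$ and $B$ produces subsets $A'\subseteq A$, $B'\subseteq B$ with $|A'|,|B'|\geq 2s$ such that each of $A'$ and $B'$ lies entirely in $U$ or entirely in $U+x$. Note that both $U$ and $U+x$ are $(k-1,s)$-variety evasive, since this property is translation-invariant. As a first step I would show $j:=\dim\aff(A')\geq k$ and $i:=\dim\aff(B')\geq k$; otherwise $A'$ (say) sits in the $(k-1)$-dimensional flat $\aff(A')$, which is a variety of degree $1$, and evasiveness forces $|A'|<s$, contradicting $|A'|\geq 2s$.

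Next, I would apply Lemma~\ref{lemma:intersections of spheres} to $\bigcap_{a\in A'}S_a$ to obtain an affine flat $U_{A'}$ orthogonal to $\aff(A')$ with $\dim U_{A'}\leq d-j$ and $B'\subseteq S_{a_0}\cap U_{A'}$ for any fixed $a_0\in A'$. Since $\aff(B')\subseteq U_{A'}$, this yields $i\leq d-j$, hence $i+j\leq d$. Combined with $i,j\geq k=\lfloor d/2\rfloor$, the only possibilities are $(i,j)\in\{(k,k),(k,k+1),(k+1,k)\}$ when $d=2k+1$ is odd, and $(i,j)=(k,k)$ when $d=2k$ is even. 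After possibly interchanging the roles of $A'$ and $B'$ (to handle the case $(k+1,k)$), one can arrange $\dim\aff(B')=k$ with $\aff(B')\subseteq U_{A'}$.

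The key dichotomy concerns whether $\aff(B')\subseteq S_{a_0}$. If not, then $\aff(B')\cap S_{a_0}$ is a proper subvariety of $\aff(B')$ of dimension at most $k-1$ and degree at most $2$, containing $B'$; evasiveness gives $|B'|<s$, a contradiction. If $\aff(B')\subseteq S_{a_0}$, Lemma~\ref{lemma:isotropic flats} shows that $V_0:=\aff(B')-b_0$ is a $k$-dimensional totally isotropic subspace and $w:=a_0-b_0$ is orthogonal to $V_0$ with $\|w\|_d^2=1$ (since $b_0\in S_{a_0}$). For odd $d=2k+1$, this pair is exactly what Lemma~\ref{lemma:auxiliary odd d} forbids, invoking $p\equiv 3\pmod 4$ so that $-1$ is a nonsquare in $\bF_p$. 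For even $d=2k$, a dimension count gives $V_0=V_0^\perp$ (since $\dim V_0^\perp=2k-k=k=\dim V_0$ and $V_0\subseteq V_0^\perp$), so $w\perp V_0$ forces $w\in V_0$, whence $\|w\|_d^2=0$, contradicting $\|w\|_d^2=1$. The main obstacle is precisely this subcase $\aff(B')\subseteq S_{a_0}$, where the affine span is $k$-dimensional and $(k-1,s)$-evasiveness alone is insufficient; the resolution in both parities of $d$ hinges on the fact that a totally isotropic flat of dimension $\lfloor d/2\rfloor$ cannot be orthogonal to a unit vector.
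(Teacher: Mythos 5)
Your proof is correct and follows essentially the same approach as the paper: use Lemma~\ref{lemma:intersections of spheres} to get orthogonality of the affine spans, use $(k-1,s)$-evasiveness to force both spans to have dimension at least $k$, conclude that one span has dimension exactly $k$, and then split on whether that span lies inside a sphere centered at a point on the other side (otherwise evasiveness applied to the proper intersection gives a contradiction; if it does lie inside, Lemma~\ref{lemma:isotropic flats} makes it totally isotropic with a unit-norm orthogonal vector, and Lemma~\ref{lemma:auxiliary odd d} for odd $d$, or the self-dual count $V_0=V_0^\perp$ for even $d$, finishes). The only cosmetic deviations are that you pigeonhole $A,B$ into $U$ or $U+x$ rather than observing that $\cP$ itself is $(k-1,2s)$-evasive, and you phrase the even-$d$ contradiction as $V_0=V_0^\perp$ rather than the paper's count $\dim V + \dim\aff(\{w_j\}\cup V) > d$; these are equivalent. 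One small imprecision: in the ``$\aff(B')\not\subseteq S_{a_0}$'' branch you write ``evasiveness gives $|B'|<s$,'' but since $\aff(B')\cap S_{a_0}$ may split into two degree-$1$ components, the bound is $|B'|<2s$; this still contradicts $|B'|\geq 2s$, so the argument is unaffected.
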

\begin{proof} 
We argue by contradiction, assuming that there exist points $v_1, \dots, v_{4s},w_1, \dots, w_{4s}\in \cP$ such that $\|v_i-w_j\|_d=1$ for all $i, j\in [4s]$. Let $V=\aff \{v_1, \dots, v_{4s}\}$ and $W=\aff \{w_{1}, \dots, w_{4s}\}$. 

We claim that the flats $V$ and $W$ are orthogonal, with respect to the bilinear form $\langle \cdot, \cdot\rangle_d$. If we denote by $S_1, \dots, S_{4s}$ the unit spheres centered at $w_{1}, \dots, w_{4s}$, the assumption implies that $v_{1}, \dots, v_{4s}\in \bigcap_{i=1}^{4s} S_i$. By Lemma~\ref{lemma:intersections of spheres}, there exists an affine flat $V'$, orthogonal to $W$, such that $\bigcap_{j=1}^{4s} S_j=S_1\cap V'$. Since $V\subseteq V'$, we conclude $V$ and $W$ are orthogonal, which implies that $\dim V+\dim W\leq d$. 

Since $\cP$ is $(k-1, 2s)$-variety evasive, no $(k-1)$-dimensional flat contains $2s$ points of $\cP$ (note that an affine flat is a variety of degree $1$). In particular, this means that $\dim V\geq k$ and $\dim  W\geq k$. Combining this with $\dim V+\dim W\leq d=2k+1$, we conclude that $\dim V = k$ or $\dim W = k$. By symmetry, we may assume that $\dim V=k$. 

We claim that $V\subseteq S_j$, for every $j\in [4s]$. Suppose this was not the case, i.e. that there was some $j$ for which $V\not\subseteq S_j$. Since $V$ is a variety of dimension $k$ and degree $1$ and $S_j$ is a variety of dimension $d-1$ and degree $2$, Theorem~\ref{thm:preliminary} implies that if $V\cap S_j$ is a union of irreducible components $Z_1, \dots, Z_t$, then $\sum_{i=1}^t \deg Z_i\leq 2$. Hence, $V\cap S_j$ consists either of one $(k-1)$-dimensional component of degree $2$ or of at most two $(k-1)$-dimensional components of degree $1$. However, recall that $v_1, \dots, v_{4s}\in S_j\cap V$ and that every $(k-1)$-dimensional variety of degree at most $2$ contains less than $2s$ points of $\cP$. Hence, in both cases we have a contradiction and so we must have $V\subseteq S_j$. Therefore, by Lemma~\ref{lemma:isotropic flats}, $V$ must be a totally isotropic flat and we have $\langle w_j-y, z-y\rangle_d=0$ for all $y, z\in V$. 

Since $\langle w_j-v_i, w_j-v_i\rangle_d =1$, the point $w_j$ does not lie in the flat $V$. Also, $\aff (\{w_j\}\cup V)$ is orthogonal to $V$, since $\langle w_{j}-x, y-x\rangle_d=0$ for all $x, y\in V$. Hence, $\dim \aff (\{w_{j}\}\cup V)=\dim V+1=k+1$. If $d$ is even, this is a contradiction since $\dim V+\dim \aff (\{w_{j}\}\cup V)=2k+1>d$.

If $d$ is odd, we found a totally isotropic $k$-flat with a vector $w_{j}-v_{i}$ which has norm $1$ and is orthogonal to this $k$-flat. But this is impossible by Lemma~\ref{lemma:auxiliary odd d}.
\end{proof}

Let us now put all the ingredients together and show the general construction.

\begin{proof}[Proof of Theorem \ref{thm:sphere_construction}]
Let $p$ be the smallest prime such that $p\equiv3 \bmod 4$ and $p>n^{\frac{1}{\lceil d/2\rceil +1}}$. By a theorem of Breusch \cite{B}, we have $p\leq 2n^{\frac{1}{\lceil d/2\rceil +1}}$, as long as $n^{\frac{1}{\lceil d/2\rceil +1}}\geq 7$. Let $\cP$ be the set of at least $p^{\lceil d/2\rceil +1} \geq n$ points constructed in this section. For $d\not\equiv 1\bmod 4$, taking a random $n$ element subset of $\cP$ gives the desired set. However, for $d\equiv 1 \bmod 4$ the standard norm and the norm $\|\cdot\|_d$ are different as we need one final step to complete the construction.

If $d\equiv 1\bmod 4$, we let $q=p^2$, we choose $\alpha\in \bF_q$ to be a solution of $\alpha^2=-1$, and consider the map $\varphi:\bF_p^d\to \bF_q^d$ given by $\varphi((x_1, \dots, x_d))=(x_1, \dots, x_{d-1}, \alpha x_d)$. Note that $\varphi(u)$ and $\varphi(v)$ are at unit distance measured with respect to the standard inner product if and only if $\|u-v\|_d=1$. Hence, we conclude that the set $\{\varphi(u)|u\in \cP\}\subseteq \bF_q^d$ spans $\Omega\Big(|\cP|^{2-\frac{1}{\lceil d/2\rceil+1}}\Big)$ unit distances and has no $K_{s, s}$ in the unit distance graph, where unit distances are measured with respect to the standard inner product. Now taking a random $n$ element subset of $\cP$ completes the proof.
\end{proof}

\vspace{0.5cm}
\noindent
\textbf{Acknowledgements.} We would like to thank Larry Guth for suggesting to study point-variety incidences and stimulating discussions.

\end{document}